\theoremstyle{plain}
\newtheorem{theorem}{Theorem}[section]
\newtheorem{untheorem}{Theorem}
\newtheorem{lemma}[theorem]{Lemma}
\newtheorem{proposition}[theorem]{Proposition}
\newtheoremstyle{remark}
    {} % space above
    {} % space below
    {}          % body font
    {}          % indent amount
    {\bfseries} % theorem head font
    {.}         % punctuation after theorem head
    {.5em}      % space after theorem head
    {}          % theorem hed spec. (empty = "normal")
\theoremstyle{remark}
\newtheorem{remark}{\emph{\textbf{Remark}}}[section]
\newtheoremstyle{example}
    {\dimexpr\topsep/2\relax} % space above
    {\dimexpr\topsep/2\relax} % space below
    {}          % body font
    {}          % indent amount
    {\bfseries} % theorem head font
    {.}         % punctuation after theorem head
    {.5em}      % space after theorem head
    {}          % theorem hed spec. (empty = "normal")
\theoremstyle{example}
\newtheorem{example}{\emph{\textbf{Example}}}[section]
\newtheoremstyle{definition}
    {\dimexpr\topsep/2\relax} % space above
    {\dimexpr\topsep/2\relax} % space below
    {}          % body font
    {}          % indent amount
    {\bfseries} % theorem head font
    {.}         % punctuation after theorem head
    {.5em}      % space after theorem head
    {}          % theorem hed spec. (empty = "normal")
\theoremstyle{definition}
\newtheorem*{similartheorem*}{Theorem \dualnumber{$'$}}
\numberwithin{equation}{section}
\begin{document}
\title{A sharp form of the Marcinkiewicz Interpolation Theorem for Orlicz spaces}
%{\footnote{\noindent 2010 {\it Mathematics Subject Classification} primary 42B25, 26D15 secondary 28A25}}
%\keywords{Dilation-commuting operators, Orlicz spaces, Norm inequalities, Modular inequalities, Hardy operator, Maximal function, Hilbert transform}
{\let\thefootnote\relax\footnote{\noindent 2010 {\it Mathematics Subject Classification.} primary 46E30 secondary 46M35}}
%{\let\thefootnote\relax\footnote{The results in this paper are from the author's thesis ``The Carath\'{e}odory-Fej\'{e}r Interpolation Problems and the von Neumann Inequality" submitted to the Indian Institute of Science, Bangalore-560012.}}

%\thanks{The first named author is supported by Science and Engineering Research Board, DST, Government of India.}
%\thanks{The named author is supported by Council for Scientific and Industrial Research, MHRD, Government of India.}
%
%\author{Rajeev Gupta}
\author{Ron Kerman}
\author{Rama Rawat and Rajesh K. Singh}
%\author{}
%
%\address{Rajeev Gupta: Department of Mathematics and Statistics, Indian Institute of Technology, Kanpur-208016}
%\email{rajeevg@math.iitk.ac.in}

\address{Ron Kerman: Department of Mathematics, Brock University, St. Catharines, Ontario, L2S 3A1, Canada}
\email{rkerman@brocku.ca}
\address{Rama Rawat: Department of Mathematics and Statistics, Indian Institute of Technology, Kanpur-208016}
\email{rrawat@iitk.ac.in}
\address{Rajesh K. Singh: Department of Mathematics and Statistics, Indian Institute of Technology, Kanpur-208016}
\email{rajeshks@iitk.ac.in}
%\author{ Nandini Nilakantan\footnote{Department of Mathematics and Statistics, IIT Kanpur, Kanpur-208016, India. nandini@iitk.ac.in.},  Samir Shukla\footnote{{Department of Mathematics and Statistics, IIT Kanpur, Kanpur-208016, India. samirs@iitk.ac.in.}}}

\pagestyle{headings}

\begin{abstract}
An extension of  Marcinkiewicz Interpolation Theorem, allowing intermediate spaces of Orlicz type, is proved. This generalization  yields a necessary and sufficient condition so that every quasilinear operator, which maps the set, $S(X,\mu)$, of all $\mu$-measurable simple functions on $\sigma$- finite measure space $(X,\mu)$ into $M(Y,\nu)$, the class of $\nu$-measurable functions on $\sigma$- finite measure space $(Y,\nu)$, and satisfies endpoint estimates of type:
 $1 < p< \infty$, $1 \leq r < \infty$,
\begin{equation*}
\lambda \, \nu \left( \left\lbrace y \in Y : |(Tf)(y)| > \lambda   \right\rbrace \right)^{\frac{1}{p}} \leq C_{p,r} \left( \int_{\mathbb{R_+}} \mu \left( \left\lbrace x \in X : |(f)(x)| > t   \right\rbrace \right)^{\frac{r}{p}} t^{r-1}dt \right)^{\frac{1}{r}},
\end{equation*}
for all $f \in S(X,\mu)$ and $\lambda \in \mathbb{R_+}$; is bounded from an Orlicz space into another.

%
%Let $\Phi_1$ and $\Phi_2$ be nondecreasing functions from $\mathbb{R_+}=(0,\infty)$ onto itself. For $i=1,2$ and $\gamma \in \mathbb{R}$, define the Orlicz class $L_{\Phi_{i}}(\mathbb{R_+})$ to be the set of Lebesgue-measurable functions $f$ on $\mathbb{R_+}$ such that
%\begin{equation*}
%\int_{\mathbb{R_+}} \Phi_{i} \left( k|(Tf)(t)| \right) t^{\gamma}dt < \infty
%\end{equation*} 
%for some $k>0$.
%
%Our goal in this paper is to find conditions on $\Phi_1$, $\Phi_2$, $\gamma$ and an operator $T$ so that the assertions
%\begin{equation}
%T : L_{\Phi_2,t^{\gamma}}(\mathbb{R_+}) \rightarrow L_{\Phi_1,t^{\gamma}}(\mathbb{R_+}), \tag{I}
%\end{equation}
%and
%\begin{equation}\label{modularA}
%\int_{\mathbb{R_+}} \Phi_1 \left( |(Tf)(t)| \right)t^{\gamma}dt \leq K \int_{\mathbb{R_+}} \Phi_2 \left( K|f(s)| \right)s^{\gamma}ds, \tag{M}
%\end{equation}
%in which $K>0$ is independent of $f$, say, simple on $\mathbb{R_+}$, are equivalent and to then find necessary and sufficient conditions in order that (\ref{modularA}) holds. 
\end{abstract}

\maketitle

\section{Introduction}\label{Introduction}

Let $(X,\mu)$ and $(Y,\nu)$ be two $\sigma$-finite measure spaces  and denote by $T$ a quasilinear operator  that maps the set, $S(X,\mu)$, of all $\mu$-measurable simple functions on $X$ into $M(Y,\nu)$, the class of $\nu$-measurable functions on $Y$. 
 
An important special case of the classical Marcinkiewicz interpolation theorem asserts that, if $0<p_0, p_1 < \infty$  with $p_0 < p_1$, then every quasilinear operator $T$ of weak-types $(p_0,p_0)$ and $(p_1,p_1)$, namely, satisfying the inequalities
\begin{align}\label{weak type (p,p)}
\lambda \, \nu \left( \{ y \in Y : |(Tf)(y)|> \lambda \} \right)^{\frac{1}{p_i}} \leq M_i \left( \int_{X} |f(x)|^{p_i} d\mu(x) \right)^{\frac{1}{p_i}}, \ \ \ i=0,1,
\end{align}
in which the positive constants $M_0$ and $M_1$ are independent of $f \in S(X,\mu)$  and $\lambda > 0$, is bounded on Lebesgue space $L_{p_{\theta}}(X,\mu)$, provided
\begin{equation*}
\frac{1}{p_{\theta}}= \frac{1-\theta}{p_0} + \frac{\theta}{p_1},
\end{equation*}
for some $\theta \in (0,1)$ {\cite{Ma39},\cite{Zy56}}.

In his $1956$ paper Zygmund proved the following partial generalization of Marcinkiewicz interpolation theorem, the formulation of which, \cite{Zy57}, he also attributes to Marcinkiewicz. The principal result of this paper, Theorem A, is partly modelled on this generalization.
%[A. Zygmund, A. Torchinsky and J. Str\"omberg ]\label{Marcinkiewicz Zygmumnd theorem part1}
\begin{theorem}\label{Marcinkiewicz Zygmumnd theorem part1}
Let $(X,\mu)$ and $(Y,\nu)$ be \textbf{finite} measure spaces and suppose $T$ is a quasilinear operator  of weak-types $(p_{0},p_{0})$ and $(p_{1},p_{1})$, $1 \leq p_{0} < p_{1} < \infty$. Let  $\Phi$ be a  increasing continuous function on $\mathbb{R_+}=(0,\infty)$ satisfying $\Phi(0^{+})=0$. Then, $Tf$ is defined for every $f$ with $\int_{X}\Phi(|f(x)|)d\mu(x) < \infty$ and
\begin{equation}\label{modular boundedness by Zygmund part1}
\int_{Y} \Phi(|(Tf)(y)|) d\nu(y) \leq K\int_{X} \Phi(|f(x)|)d\mu(x)  + K,
\end{equation}
$K>0$ being independent of $f$, provided
\begin{equation}\label{Marcinkiewicz Stromberg Zygmund condditions}
\begin{aligned}
\Phi(2t)= O (\Phi(t)),\\
\int_{1}^t \frac{\Phi(s)}{s^{p_{0}+1}} ds =  O \left( \frac{\Phi(t)}{t^{p_0}} \right)\\
\end{aligned}
\end{equation}
and
\begin{equation}
\int_t^{\infty} \frac{\Phi(s)}{s^{p_{1}+1}} ds = O \left( \frac{\Phi(t)}{t^{p_1}} \right),
\end{equation}
as $t \rightarrow \infty$.
\end{theorem}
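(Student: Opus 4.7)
The plan is to follow the classical Marcinkiewicz decomposition and convert the resulting distributional estimates into a modular bound by integrating against $d\Phi$. For each $\lambda > 0$ I split $f = f_\lambda + f^\lambda$, where $f_\lambda = f\,\chi_{\{|f|\le\lambda\}}$ and $f^\lambda = f\,\chi_{\{|f|>\lambda\}}$. Quasilinearity yields a constant $K_0 > 0$ with
\[
\nu(\{|Tf| > 2K_0 \lambda\}) \le \nu(\{|Tf_\lambda| > \lambda\}) + \nu(\{|Tf^\lambda| > \lambda\}),
\]
and the two weak-type hypotheses give
\[
\nu(\{|Tf_\lambda| > \lambda\}) \le M_1^{p_1} \lambda^{-p_1} \int_X |f|^{p_1} \chi_{\{|f|\le\lambda\}} \, d\mu, \qquad \nu(\{|Tf^\lambda| > \lambda\}) \le M_0^{p_0} \lambda^{-p_0} \int_X |f|^{p_0} \chi_{\{|f|>\lambda\}} \, d\mu.
\]
The second condition in (\ref{Marcinkiewicz Stromberg Zygmund condditions}) forces $\Phi(t)/t^{p_0}$ to be essentially nondecreasing for $t$ large, hence $\Phi(t) \ge c t^{p_0}$ eventually, so that $\int_X \Phi(|f|)\,d\mu < \infty$ combined with $\mu(X) < \infty$ places $f$ in $L_{p_0}(X,\mu)$; this lets $Tf$ be defined by approximation by simple functions, after which the weak-type estimates above are genuinely meaningful.

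Next I express the modular via the layer-cake formula
\[
\int_Y \Phi(|Tf|) \, d\nu = \int_0^\infty \nu(\{|Tf| > s\}) \, d\Phi(s),
\]
change variables $s = 2K_0\lambda$, and use the $\Delta_2$-type hypothesis $\Phi(2t) = O(\Phi(t))$ to absorb the factor $2K_0$ into a multiplicative constant on $d\Phi$. I then split at a fixed cutoff $\lambda_0$, chosen large enough that both asymptotic hypotheses in (\ref{Marcinkiewicz Stromberg Zygmund condditions}) are in force past it. On the low range $(0,\lambda_0]$ the trivial bound $\nu(\{|Tf|>\lambda\}) \le \nu(Y)$ controls the integral by $\nu(Y)\Phi(2K_0\lambda_0)$; this is the source of the additive constant $K$ in (\ref{modular boundedness by Zygmund part1}) and is precisely where finiteness of $\nu(Y)$ enters.

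On the high range $(\lambda_0,\infty)$ I insert the two distributional bounds above and apply Fubini to interchange the $\lambda$- and $x$-integrations. This reduces matters to controlling inner integrals of the form $\int_t^\infty \lambda^{-p_1}\,d\Phi(\lambda)$ and $\int_{\lambda_0}^t \lambda^{-p_0}\,d\Phi(\lambda)$, which an integration by parts converts to $\int_t^\infty \Phi(s) s^{-p_1-1}\,ds$ and $\int_{\lambda_0}^t \Phi(s) s^{-p_0-1}\,ds$ up to harmless boundary terms. The asymptotic conditions in (\ref{Marcinkiewicz Stromberg Zygmund condditions}) bound each by a constant multiple of $\Phi(t)t^{-p_i}$; substituting $t = |f(x)|$ cancels the $|f(x)|^{p_i}$ prefactor produced by the outer integration, and the remainder is dominated by $C\int_X \Phi(|f|)\,d\mu$. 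The main obstacle throughout is that the Dini-type hypotheses on $\Phi$ are asymptotic rather than uniform; the cutoff at $\lambda_0$, together with the finiteness of $\nu$, is precisely what turns the non-uniform small-argument contributions into the harmless additive constant in (\ref{modular boundedness by Zygmund part1}).
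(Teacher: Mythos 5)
Your proposal is essentially the classical Marcinkiewicz--Zygmund argument, and it is correct in substance; note, however, that the paper does not prove this statement at all --- it is recorded as background and attributed to Zygmund's 1956 paper --- so there is no ``paper's own proof'' to compare against.

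A few small points worth tightening. (i) The phrase ``absorb the factor $2K_0$ into a multiplicative constant on $d\Phi$'' is imprecise: one cannot rescale a Stieltjes measure in that way. What actually happens is that after the substitution $s=2K_0\lambda$ and Fubini you are left with inner integrals $\int \lambda^{-p_i}\,d\Phi(2K_0\lambda)$; a second change of variable and integration by parts produce quantities of the form $\Phi(2K_0 t)/t^{p_i}$, and it is there --- at the end --- that the $\Delta_2$-type hypothesis $\Phi(2t)=O(\Phi(t))$ (iterated finitely many times to reach the factor $2K_0$) converts $\Phi(2K_0 t)$ into $C\,\Phi(t)$. (ii) When you integrate by parts you should check the boundary terms explicitly: the vanishing of $u^{-p_1}\Phi(u)$ as $u\to\infty$ is not automatic from the convergence of the tail integral alone, but does follow from monotonicity of $\Phi$ together with $\int_u^{2u}\Phi(s)s^{-p_1-1}\,ds\geq c\,\Phi(u)u^{-p_1}$; and the boundary term $t^{-p_0}\Phi(2K_0 t)$ arising at the low index is not negligible but is exactly of the right order $\Phi(t)/t^{p_0}$ after $\Delta_2$ and is absorbed into the main term. (iii) Your deduction that $\Phi(t)\gtrsim t^{p_0}$ eventually (hence $f\in L_{p_0}$ when $\mu(X)<\infty$) is correct, since $\int_1^t\Phi(s)s^{-p_0-1}\,ds$ is bounded below by a positive constant once $t\geq 2$, and the hypothesis bounds it above by $C\Phi(t)/t^{p_0}$. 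With these details filled in, the argument is complete and is the standard one.
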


Str\"omberg \cite{Str79} proved a similar result when  both the measure spaces $(X,\mu)$ and $(Y,\nu)$ are $(\mathbb{R}^n,m)$, $m$ being Lebesgue measure on $\mathbb{R}^n$. His result can be adapted to incorporate \emph{two}  increasing continuous functions $\Phi_1$ and $\Phi_2$, and then it reads

%In a slight more general form, to incorporate even \emph{two} continuous increasing functions $\Phi_1$ and $\Phi_2$, it reads
%\label{Marcinkiewicz Zygmumnd theorem part1}
\begin{theorem}[Str\"omberg, {\cite{Str79}}]\label{Stromberg's general result}
Let $(X,\mu)$ and $(Y,\nu)$ be totally $\sigma$-finite measure spaces with $\mu(X)=\nu(Y)= \infty$. Suppose $T$ is a  quasilinear operator from $S(X,\mu)$ into $M(Y,\nu)$ that is of weak-types $(p_{0},p_{0})$ and $(p_{1},p_{1})$, $1 \leq p_{0} < p_{1} < \infty$. Then, 
\begin{equation}
\int_{Y} \Phi_1(|(Tf)(y)|) d\nu(y) \leq \int_{X} \Phi_2(|f(x)|)d\mu(x),
\end{equation}
where $K>0$ is independent of $f \in S(X,\mu)$, provided there exist $A,B>0$ such that
\begin{equation}\label{Zygmund and Stromberg conditions labeled first time}
\begin{aligned}
t^{p_0}\int_{0}^t \frac{\Phi_1(s)}{s^{p_0+1}} ds \leq \Phi_2(A t)\\
t^q\int_t^{\infty} \frac{\Phi_1(s)}{s^{q+1}} ds \leq  \Phi_2(B t),
\end{aligned}
\end{equation}
%\begin{equation*}
%t^{p_0}\int_{0}^t \frac{\Phi_1(s)}{s^{p_0+1}} ds \leq \Phi_2(A t)
%\end{equation*}
%and
%\begin{equation*}
%t^{p_1}\int_t^{\infty} \frac{\Phi_1(s)}{s^{p_1+1}} ds \leq  \Phi_2(B t),
%\end{equation*}
for all $t \in \mathbb{R_+}$. 
\end{theorem}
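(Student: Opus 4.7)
The plan is to reduce the modular inequality to a variant of the classical Marcinkiewicz splitting and then invoke the hypotheses (\ref{Zygmund and Stromberg conditions labeled first time}) via Fubini. Writing $\nu_{Tf}(\lambda) := \nu\{y : |(Tf)(y)| > \lambda\}$, the layer-cake formula gives
\begin{equation*}
\int_Y \Phi_1(|(Tf)(y)|) \, d\nu(y) = \int_0^\infty \Phi_1'(\lambda) \, \nu_{Tf}(\lambda) \, d\lambda.
\end{equation*}
For each $\lambda > 0$ I would decompose $f = g_\lambda + h_\lambda$ with $g_\lambda := f \chi_{\{|f| > \alpha \lambda\}}$ and $h_\lambda := f \chi_{\{|f| \leq \alpha \lambda\}}$, choosing $\alpha > 0$ so that quasilinearity of $T$ yields $\nu_{Tf}(\lambda) \leq \nu_{Tg_\lambda}(\lambda/c) + \nu_{Th_\lambda}(\lambda/c)$ for a fixed $c > 0$. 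Applying the weak-type $(p_0,p_0)$ bound to the first summand and the weak-type $(p_1,p_1)$ bound to the second then gives the pointwise estimate
\begin{equation*}
\nu_{Tf}(\lambda) \leq (cM_0/\lambda)^{p_0} \int_{\{|f|>\alpha\lambda\}} |f|^{p_0} d\mu + (cM_1/\lambda)^{p_1} \int_{\{|f|\leq \alpha\lambda\}} |f|^{p_1} d\mu.
\end{equation*}

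Substituting this into the layer-cake formula and interchanging the order of integration by Fubini, the contribution of the first summand collapses to
\begin{equation*}
I_1 = (cM_0)^{p_0}\int_X |f(x)|^{p_0} \left( \int_0^{|f(x)|/\alpha} \Phi_1'(\lambda) \lambda^{-p_0} d\lambda \right) d\mu(x),
\end{equation*}
with an analogous expression $I_2$ involving the exponent $p_1$ and the tail integral $\int_{|f(x)|/\alpha}^\infty \Phi_1'(\lambda) \lambda^{-p_1} d\lambda$. To align these with the hypothesis I would integrate by parts to rewrite
\begin{equation*}
\int_0^t \Phi_1'(\lambda) \lambda^{-p_0} d\lambda = p_0 \int_0^t \Phi_1(s) s^{-p_0-1} ds + \Phi_1(t) t^{-p_0},
\end{equation*}
and symmetrically for the tail. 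Monotonicity of $\Phi_1$ allows the boundary term $\Phi_1(t) t^{-p_0}$ to be absorbed into $\int_t^{2t} \Phi_1(s) s^{-p_0-1} ds$ up to a constant depending only on $p_0$. The hypotheses (\ref{Zygmund and Stromberg conditions labeled first time}) then bound the inner integrals in $I_1$ and $I_2$ by $|f(x)|^{-p_0}\Phi_2(A'|f(x)|)$ and $|f(x)|^{-p_1}\Phi_2(B'|f(x)|)$ respectively, which, multiplied by the $|f(x)|^{p_i}$ factors, yield the desired modular inequality.

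The principal obstacle is bookkeeping the multiplicative constants that accumulate inside the argument of $\Phi_2$, since no $\Delta_2$-condition has been imposed. Absorbing the stray factor $2/\alpha$ requires either enlarging $A$ and $B$ in (\ref{Zygmund and Stromberg conditions labeled first time}) or reinterpreting the conclusion in the equivalent normalized form $\int_Y \Phi_1(|Tf|/A_0) \, d\nu \leq K \int_X \Phi_2(|f|) \, d\mu$ for suitable $A_0, K > 0$ independent of $f$. A secondary point is that the integration-by-parts step requires $\Phi_1(\lambda)/\lambda^{p_0} \to 0$ as $\lambda \to 0^+$ and $\Phi_1(\lambda)/\lambda^{p_1} \to 0$ as $\lambda \to \infty$; both follow from (\ref{Zygmund and Stromberg conditions labeled first time}) by comparing $\Phi_1(\lambda)/\lambda^{p_i}$ with $\int_{\lambda/2}^\lambda \Phi_1(s) s^{-p_i-1} ds$ (respectively $\int_\lambda^{2\lambda}$) using monotonicity of $\Phi_1$ together with the assumption $\Phi_1(0^+) = 0$.
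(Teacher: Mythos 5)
The paper does not supply a proof of this theorem; it is quoted verbatim from Str\"omberg \cite{Str79} as part of the historical background, so there is no ``paper's own proof'' to compare against. Your proposal is the standard Zygmund--Str\"omberg argument and it is sound: split $f$ at the threshold $\alpha\lambda$, apply the two weak-type estimates to the unbounded and bounded pieces, expand $\int_Y\Phi_1(|Tf|)\,d\nu$ by the layer-cake formula, interchange the order of integration, and land on the inner integrals $\int_0^{|f|/\alpha}\phi_1(\lambda)\lambda^{-p_0}\,d\lambda$ and $\int_{|f|/\alpha}^{\infty}\phi_1(\lambda)\lambda^{-p_1}\,d\lambda$, which integration by parts ties to the hypotheses (\ref{Zygmund and Stromberg conditions labeled first time}).

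Three small points deserve sharpening. First, be explicit about the sign of the boundary terms: in $\int_t^{\infty}\phi_1(\lambda)\lambda^{-p_1}\,d\lambda$ the boundary term $-\Phi_1(t)t^{-p_1}$ is negative and can simply be discarded, whereas in $\int_0^t\phi_1(\lambda)\lambda^{-p_0}\,d\lambda$ the term $+\Phi_1(t)t^{-p_0}$ must be absorbed as you describe. Second, for the limit at $0^+$ you should compare $\Phi_1(\lambda)/\lambda^{p_0}$ against $\int_{\lambda}^{2\lambda}\Phi_1(s)s^{-p_0-1}\,ds$ (not the interval to the left), since monotonicity of $\Phi_1$ gives an \emph{upper} bound that way; likewise use $\int_{\lambda/2}^{\lambda}$ at $\infty$. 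Third, your worry about stray multiplicative constants is legitimate but resolves exactly as you indicate: since $\Phi_2$ is convex with $\Phi_2(0)=0$, one has $c\,\Phi_2(t)\le\Phi_2(ct)$ for $c\ge1$, so external constants migrate harmlessly into the argument of $\Phi_2$. Note also that the theorem's displayed conclusion is evidently misprinted (the dangling ``where $K>0$ is independent of $f$'' refers to nothing); the intended form, consistent with item (2) of Theorem A, is $\int_Y\Phi_1(|Tf|)\,d\nu\le\int_X\Phi_2(K|f|)\,d\mu$, which is precisely what your argument produces.
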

The conditions in (\ref{Zygmund and Stromberg conditions labeled first time}) will be referred to as \emph{Zygmund-Str\"omberg conditions} in the sequel.

A (complete) generalization of Marcinkiewicz interpolation theorem in a different direction was given by Stein and Weiss \cite{StW59}. They arrived at the same conclusion as Marcinkiewicz assuming that the weak-type inequalities hold only for characteristic functions, $\chi_{E}$, of sets $E \subset X$ with $\mu(E)< \infty$. This leads to the so-called restricted weak-type conditions. Calder\'on in \cite{Ca66} (see also Hunt \cite{Hu64}) showed that if a nonnegative sublinear operator $T$ is of restricted weak-type $(p,q)$ then it satisfies the stronger inequality
\begin{equation}\label{restricted weak type (p,q)}
\lambda \, \nu \left( \left\lbrace y \in Y : |(Tf)(y)| > \lambda   \right\rbrace \right)^{\frac{1}{q}} \leq M \int_{\mathbb{R_+}}  \mu \left( \left\lbrace x \in X : |f(x)| > t   \right\rbrace \right)^{\frac{1}{p}}dt.
\end{equation}

In \cite{Ci98} and \cite{Ci99}, Cianchi obtained an interpolation theorem, in the spirit of those of Zygmund and Str\"omberg, for quasilinear operators of \emph{restricted} weak-types $(p_0,p_0)$ and $(p_1,p_1)$, $1 \leq p_0 < p_1 < \infty$. It concerns two increasing functions on $\mathbb{R_+}$, $\Phi_1$ and $\Phi_2$, that are Young functions. A Young function, $\Phi$, is a function from $\mathbb{R_+}$ into $\mathbb{R_+}$ having the form
\begin{equation}
\Phi(t)= \int_0^t \phi(s)ds,
\end{equation}
for all  $t \in \mathbb{R_+}$, in which $\phi : \mathbb{R_+} \rightarrow \mathbb{R_+}$ is an increasing, left-continuous, with $\phi(0^{+})=0$ and which is neither identically zero nor identically infinite.

%Specifically, for $i=1,2$, $t \in \mathbb{R_+}$, $\Phi_i(t)= \int_0^t \phi_{i}(s)ds$, in which $\phi_{i}$ increases from $\mathbb{R_+}$ onto itself. Functions like these are called Young functions.

Given a Young function $\Phi$ and a totally $\sigma$-finite measure space $(X,\mu)$ one defines the Orlicz class
\begin{equation*}
L_{\Phi}(X,\mu)= \left\lbrace f \in M(X,\mu): \int_{X} \Phi \left( k|f(x)|\right) d\mu(x) < \infty \, \, \, \text{for some } \, k \in \mathbb{R_+}  \right\rbrace.
\end{equation*}
Under the gauge norm
\begin{equation}\label{gauge norm}
\| f \|_{L_{\Phi}(X,\mu)} = \inf \left\lbrace  \lambda >0 : \int_{X} \Phi \left( \frac{|f(x)|}{\lambda} \right) d \mu(x) \leq 1   \right\rbrace,
\end{equation}
$L_{\Phi}(X,\mu)$ becomes a Banach function space as in \cite[Theorem $8.9$, p. 269]{BS88}.

We observe here that if the $\Phi_1$ and $\Phi_2$ in \Cref{Stromberg's general result} are Young functions, then the assumptions of the theorem guarantee the norm inequality which is the subject of

\begin{theorem}[Cianchi, {\cite{Ci98}, \cite{Ci99}}]
Let $(X,\mu)$ and $(Y,\nu)$ be nonatomic $\sigma$-finite measure spaces with (for simplicity) $\mu(X)=\nu(Y)= \infty$. Fix the indices $p_0$ and $p_1$, $1 \leq p_0 < p_1 < \infty$. Suppose $\Phi_{i}(t)= \int_0^t \phi_{i}(s)ds$ are Young functions with complementary functions $\Psi_{i}(t)= \int_0^t \phi_{i}^{-1}(s)ds$, $i=1,2$. Assume $T$ is a  quasilinear operator from $S(X,\mu)$ into $M(Y,\nu)$ which is of restricted weak-types $(p_{0},p_{0})$ and $(p_{1},p_{1})$, that is (\ref{restricted weak type (p,q)}) holds for pairs $(p_0,p_0)$ and $(p_1,p_1)$. Then, there exists a constant $C>0$, independent of $f \in L_{\Phi_2}(X,\mu)$, with
\begin{equation*}
\| Tf \|_{L_{\Phi_1}(Y,\nu)} \leq C \| f \|_{L_{\Phi_2}(X,\mu)},
\end{equation*}
if, for some $K>0$,
\begin{align}
\left( \int_{Kt}^{\infty} \left( \frac{s}{\Psi_1(s)} \right)^{p_1-1} ds \right)^{\frac{1}{p_1}} \left( \int_{0}^{t} \frac{\Psi_2(s)}{s^{p_1'+1}} ds \right)^{\frac{1}{p_1'}} \leq K, \hspace*{3.5cm} \nonumber\\
 and \hspace*{15cm} \\
t \int_0^t \frac{\Phi_1(s)}{s^{2}}ds \leq \Phi_2(Kt),  \hspace*{5.8cm} \nonumber
\end{align}
or
\begin{equation}\label{Cianchi's condition for Hpr}
\left( \int_{Kt}^{\infty} \left( \frac{s}{\Phi_2(s)} \right)^{p_0'-1} ds \right)^{\frac{1}{p_0'}} \left( \int_{0}^{t} \frac{\Phi_1(s)}{s^{p_0+1}} ds \right)^{\frac{1}{p_0}} \leq K,
\end{equation}
for all $t \in \mathbb{R_+}$, depending on whether $p_0=1$ or $1< p_0 < \infty$.
\end{theorem}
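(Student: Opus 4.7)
The plan is to deduce the Orlicz norm inequality from a pointwise estimate on decreasing rearrangements of the form $(Tf)^*(t)\le C\,(Sf^*)(t)$ for a Calder\'on-type integral operator $S$, and then verify that the hypotheses of the theorem encode exactly the boundedness of $S$ between the two Orlicz spaces.

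First I would strengthen the restricted weak-type hypothesis via the Calder\'on--Hunt inequality (\ref{restricted weak type (p,q)}) already recalled in the excerpt. Integration by parts against the decreasing rearrangement gives, for $i=0,1$,
\begin{equation*}
\lambda\,\nu(\{|Tf|>\lambda\})^{1/p_i} \leq C_i \int_0^\infty f^*(s)\, s^{1/p_i-1}\,ds,
\end{equation*}
so that $T$ maps the Lorentz space $L^{p_i,1}$ continuously into $L^{p_i,\infty}$. For each $t>0$ I would then split $f = g_t + h_t$ by cutting at the level $f^*(t)$, so that $g_t^*(s) = \min(f^*(s), f^*(t))$ and $h_t^*(s) = (f^*(s)-f^*(t))_+$. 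Applying the Lorentz bound with exponent $p_1$ to $g_t$ and that with exponent $p_0$ to $h_t$, and combining with the quasi-subadditivity estimate $(Tf)^*(2t)\leq (Tg_t)^*(t)+(Th_t)^*(t)$, yields the pointwise rearrangement inequality
\begin{equation*}
(Tf)^*(t) \leq C\Bigl( t^{-1/p_0}\int_0^t f^*(s)\, s^{1/p_0-1}\,ds + t^{-1/p_1}\int_t^\infty f^*(s)\, s^{1/p_1-1}\,ds \Bigr) =: C\,(Sf^*)(t).
\end{equation*}

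Since the Orlicz gauge norm is rearrangement invariant, it then suffices to show that $S$ maps $L_{\Phi_2}(\R,\,ds)$ boundedly into $L_{\Phi_1}(\R,\,ds)$. Writing $S = S_0 + S_1$ as the sum of the Hardy operator $S_0 g(t)=t^{-1/p_0}\int_0^t g(s) s^{1/p_0-1}\,ds$ and its Hardy conjugate $S_1 g(t)=t^{-1/p_1}\int_t^\infty g(s) s^{1/p_1-1}\,ds$, I would treat each piece via a modular inequality. For $S_0$, Young's inequality with the Young pair $(\Phi_1,\Psi_1)$ combined with a Fubini swap reduces the required bound to a Zygmund--Str\"omberg-type condition; this is precisely the second inequality of (7) when $p_0=1$ and, after duality through $\Psi_2$, becomes condition (\ref{Cianchi's condition for Hpr}) when $p_0>1$. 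For $S_1$ (always at the upper endpoint $p_1>1$), the analogous argument carried out on the complementary-function side produces the first inequality of (7).

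The main obstacle will be this final step: translating each of (7) and (\ref{Cianchi's condition for Hpr}) into a sharp modular inequality for the corresponding Hardy-type operator between $L_{\Phi_2}(\R)$ and $L_{\Phi_1}(\R)$. The technical ingredients are Young's inequality $st\leq \Phi(s)+\Psi(t)$, Fubini's theorem, and the standard pointwise bounds $\Phi(t)\leq t\phi(t)\leq \Phi(2t)$ for Young functions, but their combination requires care. The case $p_0=1$ has to be handled separately since the complementary Young function to $L^{1,1}=L^1$ degenerates, which is the structural reason the lower-endpoint clause of (7) takes a direct Zygmund--Str\"omberg form rather than a dualized one involving $\Psi_2$.
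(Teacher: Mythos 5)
The theorem you are trying to prove is stated in the paper as an attributed result of Cianchi, \cite{Ci98}, \cite{Ci99}; the paper gives no proof of it. What the paper \emph{does} prove are the more general Theorems \ref{main result 1 class Wpr modified by Kerman}, \ref{main result 22 class Wpr}, \ref{main result for the Wqr class 1} and \ref{main result for the Wqr class 2}, which specialize (at $r_0=r_1=1$) to conditions of precisely Cianchi's form. Your outline follows the same framework: upgrade the restricted weak-type hypotheses to $L_{p_i,1}\to L_{p_i,\infty}$ bounds via the Calder\'on--Hunt inequality, cut $f$ at the level $f^*(t)$ to obtain the pointwise domination $(Tf)^*(t)\le C\,(S f^*)(t)$ with the Calder\'on operator $S=S_0+S_1$, and then reduce to showing that $S_0=H^{p_0,1}$ and $S_1=H_{p_1,1}$ are bounded between the Orlicz spaces. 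That much matches the paper's Theorem \ref{Dominance of Calderon operator for joined class} and Theorem \ref{Calderon type theorem for the joined class}, and there is nothing wrong with it.

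The gap is in your ``final step.'' You assert that Young's inequality, Fubini, and the elementary bounds $\Phi(t)\le t\phi(t)\le\Phi(2t)$ suffice to translate the stated integral conditions into the required modular inequalities for $S_0$ and $S_1$ when $1<p_0$. In the degenerate case $p_0=1$ one does indeed get the Zygmund--Str\"omberg condition $t\int_0^t\Phi_1(s)s^{-2}\,ds\le\Phi_2(Kt)$ by a Fubini swap followed by pointwise domination of the averaged kernel. But for $1<p_0<\infty$ this is not how the sharp condition is obtained: the modular inequality for $H^{p_0,1}$ reduces first to a two-weight Hardy inequality restricted to nonincreasing functions (Theorem \ref{Ineq equi Mod Hpr}), which is then converted to an inequality on all nonnegative functions via Sawyer's duality principle (Theorem \ref{Sw}), and finally characterized using Stepanov's two-weight estimates for the Riemann--Liouville operator (\cite{Stp90}). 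A Young/Fubini argument gives at best a sufficient condition that is strictly stronger than \eqref{Cianchi's condition for Hpr}, which defeats the purpose since the theorem claims the stated condition is the correct one. Your proposal elides exactly the part of the proof that carries the content.

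A second, more concrete, problem: your decomposition $S=S_0+S_1$ demands \emph{two} conditions, one per endpoint, and you explicitly say the $S_1$ piece contributes the first inequality of the first display. But for $1<p_0<\infty$ the theorem as stated gives the \emph{single} condition \eqref{Cianchi's condition for Hpr}, which only involves the lower index $p_0$. As written, your argument proves a version of the theorem that requires an additional upper-endpoint hypothesis beyond \eqref{Cianchi's condition for Hpr}. Either you must show that \eqref{Cianchi's condition for Hpr} implies the missing $p_1$-condition (it does not obviously do so), or you are in fact proving a weaker statement than the one given. Before proceeding you should reconcile this with Cianchi's original formulation, and in any case you cannot get away with ``Young plus Fubini'' for the lower endpoint when $p_0>1$.
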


In this paper, we have considered operators such as
\begin{equation*}
f(x) \rightarrow \int_t^{\infty} f^{*_{\mu}}(s) \frac{ds}{s},
\end{equation*}
 where $f^{*_{\mu}}$ is the decreasing rearrangement of $f$ (see (\ref{decreasing rearrangements S}). These are not quasilinear, however, they are $r$-quasilinear, in the sense that
\begin{equation}\label{nu quasilinear}
 \left[ T(f_1+f_2) \right]^{*_{\nu}}(t) \leq C \left[ (Tf_1)^{*_{\nu}}(ct) + (Tf_2)^{*_{\nu}}(ct) \right],
\end{equation}
in which $C>0$, $0<c<1$ are independent of $f_1, f_2 \in S(X,\mu)$ and $t \in \mathbb{R_+}$. We observe that a quasilinear operator is $r$-quasilinear.

We now prepare to state our principal result, \Cref{Combined main theorem}, below. In it we consider weak-type conditions on a so-called $r$-quasilinear operator (see (\ref{nu quasilinear})) that are refinements of the weak type $(p,p)$ and restricted weak-type $(p,p)$ conditions. Thus, for $\sigma$- finite measure spaces $(X,\mu)$ and $(Y,\nu)$ and an $r$-quasilinear operator $T$ from $S(X,\mu)$ into $M(Y,\nu)$ and $1 \leq p< \infty$, $1 \leq r < \infty$, we are interested in the following weak type conditions:
\begin{equation}\label{type (p,r)}
\lambda \, \nu \left( \left\lbrace y \in Y : |(Tf)(y)| > \lambda   \right\rbrace \right)^{\frac{1}{p}} \leq C_{p,r} \left( \int_{\mathbb{R_+}} \mu \left( \left\lbrace x \in X : |(f)(x)| > t   \right\rbrace \right)^{\frac{r}{p}} t^{r-1}dt \right)^{\frac{1}{r}},
\end{equation}
where $C_{p,r}=C_{p,r}(T)>0$ is independent of $f \in S(X,\mu)$ and $\lambda \in \mathbb{R_+}$. For $r=1$ and $r=p$ this is (\ref{restricted weak type (p,q)}) and (\ref{weak type (p,p)}), respectively. When $1<r<p$, it will be seen that (\ref{type (p,r)}) is intermediate in strength between the two; when $r>p$, then (\ref{type (p,r)}) is stronger than (\ref{weak type (p,p)}).

%Thus, for $\sigma$- finite measure spaces $(X,\mu)$ and $(Y,\nu)$ and an $r$-quasilinear operator $T$ from $S(X,\mu)$ into $M(Y,\nu)$, we say that $T$ is of weak-type $(p,r)$, $1 \leq p< \infty$, $1 \leq r < \infty$, if there exists $C_{p,r}=C_{p,r}(T)>0$ such that
%\begin{equation}\label{type (p,r)}
%\lambda \, \nu \left( \left\lbrace y \in Y : |(Tf)(y)| > \lambda   \right\rbrace \right)^{\frac{1}{p}} \leq C_{p,r} \left( \int_{\mathbb{R_+}} \mu \left( \left\lbrace x \in X : |(f)(x)| > t   \right\rbrace \right)^{\frac{r}{p}} t^{r-1}dt \right)^{\frac{1}{r}},
%\end{equation}
%for all $f \in S(X,\mu)$ and $\lambda \in \mathbb{R_+}$. For $r=1$ and $r=p$ this is (\ref{restricted weak type (p,q)}) and (\ref{weak type (p,p)}), respectively. When $1<r<p$, it will be seen that (\ref{type (p,r)}) is intermediate in strength between the two; when $r>p$, then (\ref{type (p,r)}) is stronger than (\ref{weak type (p,p)}).

Finally, when $1 \leq p_0 < p_1< \infty$ and $1 \leq r_0,r_1 < \infty$, we denote by $W((p_0,r_0),(p_1,r_1);\mu,\nu)$ the class of all $r$-quasilinear operators $T$, mapping $S(X,\mu)$ into $M(Y,\nu)$, which satisfy weak type estimates (\ref{type (p,r)}) for the pairs $(p_0,r_0)$ and $(p_1,r_1)$. Also, denote by $W((p_0,r_0),(\infty, \infty);\mu,\nu)$ the class of all $r$-quasilinear operators $T$, mapping $S(X,\mu)$ into $M(Y,\nu)$, which satisfy the inequality (\ref{type (p,r)}) 
for the pair $(p_0,r_0)$ and are bounded from $L_{\infty}(X,\mu)$ to $L_{\infty}(Y,\nu)$.

In this paper, we assume that the measure spaces $(X,\mu)$ and $(Y,\nu)$ are such that $\mu(X)=\nu(Y)=\infty$.
%\CombinedMainTheoremS*

\begin{untheorem}\label{Combined main theorem}
Let $(X,\mu)$ and $(Y,\nu)$ be   $\sigma$-finite measure spaces with $\mu(X)=\nu(Y)=\infty$, the latter being nonatomic and separable. Fix the indices $p_0$, $p_1$, $r_0$ and $r_1$, with $1 < p_0 < p_1 < \infty$ and $1 \leq r_0, r_1 < \infty$. Suppose $\Phi_{i}(t)= \int_0^t \phi_{i}(s)ds$, $i=0,1$, are Young functions.
%
%%,  satisfying
%% with complementary functions $\Psi_{i}(t)= \int_0^t \phi_{i}^{-1}(s)ds$, $i=1,2$. 
%\begin{equation*}
%t \int_{0}^{t} \frac{\Phi_1(s)}{s^{2}}ds \leq \Phi_2(At), \ \ \ t \in \mathbb{R_+}.
%\end{equation*}
Then, the following are equivalent:
\begin{enumerate}
\item[(1)] To each $T \in W((p_0,r_0),(p_1,r_1);\mu,\nu)$ there corresponds $C>0$, depending on $C_{p_0,r_0}$ and $C_{p_1,r_1}$, such that
\begin{equation*}
\| Tf \|_{L_{\Phi_1}(Y,\nu)} \leq C \| f \|_{L_{\Phi_2}(X,\mu)},
\end{equation*}
for all $f \in S(X,\mu)$;
\item [(2)] To each $T \in W((p_0,r_0),(p_1,r_1);\mu,\nu)$ there corresponds $K>0$, depending on $C_{p_0,r_0}$ and $C_{p_1,r_1}$ such that
\begin{equation*}
\int_{Y} \Phi_1(|(Tf)(y)|) d\nu(y) \leq \int_{X} \Phi_2(K|f(x)|)d\mu(x),
\end{equation*}
for all $f \in S(X,\mu)$;
\item [(3)] There exist constants $L,B,D>0$ such that, for all $t>0$,
\begin{align*}
\chi_{[0,\infty)} (r_0-p_0) \, &  \frac{t^{p_0}}{\Phi_2(t)}  \int_0^t \frac{\Phi_1(s)}{s^{p_0+1}} ds \, \, \\
 & + \, \,  \chi_{(0,\infty)}(p_0-r_0) \left(\int_{t}^{\infty}  \frac{\phi_2(Ls)}{\Phi_2(Ls)^{\rho_0'}} s^{r \rho_0'} ds \right)^{\frac{1}{\rho_0'}} \left( \int_{0}^{t}\frac{ \Phi_1(s) }{s^{p_0+1}}ds \right)^{\frac{1}{\rho_0}} \leq B
\end{align*}
and
\begin{align*}
\chi_{[0,\infty)}  (r_1-p_1) \, & \frac{t^{p_1}}{\Phi_2(t)}  \int_0^t \frac{\Phi_1(s)}{s^{p_1+1}} ds \, \, \\
 & + \, \,  \chi_{(0,\infty)}(p_1-r_1) \left(\int_{0}^{t}  \frac{\phi_2(Ls)}{\Phi_2(Ls)^{\rho_1'}} s^{r\rho_1'} ds \right)^{\frac{1}{\rho_1'}} \left( \int_{t}^{\infty}\frac{ \Phi_1(s) }{s^{p_1+1}}ds \right)^{\frac{1}{\rho_1}} \leq D,
\end{align*}
\end{enumerate}
where $\rho_0= p_0/r_0$ and $\rho_1= p_1/r_1$.
\end{untheorem}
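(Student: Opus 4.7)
The plan is to establish the cycle $(3) \Rightarrow (2) \Rightarrow (1) \Rightarrow (3)$, the essential content lying in the first implication. The implication $(2) \Rightarrow (1)$ is routine: applying the modular inequality of $(2)$ to $g = f/(K\|f\|_{L_{\Phi_2}})$ (using that the weak-type hypotheses force $|T(cf)| = |c||Tf|$) together with the Luxemburg-style definition \eqref{gauge norm} of the gauge norm yields $\|Tf\|_{L_{\Phi_1}} \leq K\|f\|_{L_{\Phi_2}}$.

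For $(3) \Rightarrow (2)$, the heart of the theorem, I would fix $f \in S(X,\mu)$ and for each $t>0$ perform a Calder\'on-type splitting $f = g_t + h_t$ with $g_t = f\chi_{\{|f| > f^{*_\mu}(t)\}}$. Combining the $r$-quasilinearity \eqref{nu quasilinear} with the weak-type hypotheses \eqref{type (p,r)} at $(p_0,r_0)$ and $(p_1,r_1)$, and rewriting the Lorentz-type right-hand side of \eqref{type (p,r)} via the decreasing rearrangement, I expect to derive a pointwise bound of the shape
\begin{equation*}
(Tf)^{*_\nu}(t) \lesssim t^{-1/p_0}\left(\int_0^t f^{*_\mu}(v)^{r_0} v^{r_0/p_0 - 1}\,dv\right)^{1/r_0} + t^{-1/p_1}\left(\int_t^{\infty} f^{*_\mu}(v)^{r_1} v^{r_1/p_1 - 1}\,dv\right)^{1/r_1}.
\end{equation*}
Evaluating $\int_Y \Phi_1(|Tf|)\,d\nu = \int_0^{\infty} \Phi_1((Tf)^{*_\nu}(t))\,dt$ through this bound splits naturally by the sign of $r_i - p_i$: when $r_i \geq p_i$ (so $\rho_i \leq 1$), monotonicity of $f^{*_\mu}$ collapses the corresponding term to the classical Zygmund--Str\"omberg expression $t^{p_i}\int_0^t \Phi_1(s)/s^{p_i+1}\,ds$, matching the first summand in $(3)$; when $r_i < p_i$ (so $\rho_i > 1$), H\"older's inequality with conjugate exponent $\rho_i'$ peels off a factor $\Phi_2(L|f|)$ and produces the weight integral $\int \phi_2(Ls)/\Phi_2(Ls)^{\rho_i'}\,s^{r\rho_i'}\,ds$ appearing in the second summand. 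A final Fubini step combined with the assumptions in $(3)$ then yields $(2)$.

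For $(1) \Rightarrow (3)$, necessity is established by testing $(1)$ against explicit Calder\'on-type $r$-quasilinear operators acting on decreasing rearrangements (the nonatomicity and separability of $(Y,\nu)$ allows one to realize $(Y,\nu)$ as $(\mathbb{R}_+, m)$ in the usual way), fed by characteristic functions $f = \chi_E$ with $\mu(E) = t$. Computing both sides of $(1)$ explicitly for such pairs and letting $t$ range over $\mathbb{R}_+$ isolates each integral inequality in $(3)$, with subcases according to whether $r_i \geq p_i$.

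The main obstacle is the $r_i < p_i$ case in $(3) \Rightarrow (2)$: in that regime \eqref{type (p,r)} is strictly stronger than the ordinary weak-type $(p_i, p_i)$, so the classical Marcinkiewicz rearrangement argument does not apply verbatim, and one must deploy the precise H\"older split that produces the conjugate exponent $\rho_i'$ and the weight $\phi_2/\Phi_2^{\rho_i'}$ matching $(3)$. The necessity direction, while more mechanical, still requires care to confirm that each constructed test operator actually belongs to $W((p_0,r_0),(p_1,r_1);\mu,\nu)$ with finite constants.
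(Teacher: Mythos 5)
Your proposal captures the right starting point (Calder\'on-type splitting to dominate $(Tf)^{*_\nu}$ by $H^{p_0,r_0}f^{*_\mu} + H_{p_1,r_1}f^{*_\mu}$, which is the paper's Theorem \ref{Dominance of Calderon operator for joined class}), but the heart of the argument --- how one passes between the modular inequality for these Calder\'on operators and the integral conditions in $(3)$ when $r_i<p_i$ --- is a genuine gap, not a H\"older exercise. Your claim that ``H\"older's inequality with conjugate exponent $\rho_i'$ peels off a factor $\Phi_2(L|f|)$ and produces the weight integral'' does not hold up: the conditions in $(3)$ for $r_i<p_i$ are Muckenhoupt/Stepanov-type conditions that arise only after (i) reformulating the modular inequality for $H^{p_0,r_0}$ as a two-weight Hardy inequality restricted to nonnegative \emph{nonincreasing} $g$ (\Cref{Ineq equi Mod Hpr}), (ii) invoking Sawyer's duality theorem (\Cref{Sw}) to pass to an equivalent inequality for the Riemann--Liouville operator $I_2$ on arbitrary nonnegative measurable $g$, (iii) applying Stepanov's two-weight characterization to get a \emph{pair} of conditions, and (iv) showing by a further nontrivial argument (\Cref{main result 1 class Wpr modified by Kerman}) that the pair collapses to the single Cianchi-type condition appearing in $(3)$. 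None of these steps is a consequence of a direct H\"older split; the restriction to monotone test functions is precisely what forces the Sawyer duality detour, and the two-condition-to-one reduction is a separate lemma. A plan that omits this chain has not engaged with the actual difficulty.

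There is also a smaller but real flaw in your $(2)\Rightarrow(1)$ step. Plugging $g = f/(K\|f\|_{L_{\Phi_2}})$ into the modular inequality requires $|T(cf)| = c|Tf|$ for $c>0$, but the operators in $W((p_0,r_0),(p_1,r_1);\mu,\nu)$ are only assumed $r$-quasilinear in the sense of \eqref{nu quasilinear}; homogeneity is not part of the hypotheses. The paper sidesteps this by never proving $(2)\Rightarrow(1)$ directly for arbitrary $T$: it instead shows both $(1)$ and $(2)$ are equivalent to a norm inequality, respectively modular inequality, for the dilation-commuting operator $H^{p_0,r_0}+H_{p_1,r_1}$, and then invokes the norm--modular equivalence for dilation-commuting operators from \cite{KRS17}. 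This reduction to a concrete \emph{homogeneous} Calder\'on operator --- together with the $\tilde{S}$ construction of \Cref{construction of Pull back of operator S} to pull such operators back to $(X,\mu)\to(Y,\nu)$ and show they belong to the weak-type class --- is also what makes your $(1)\Rightarrow(3)$ necessity sketch (``test against explicit operators'') rigorous, and you would need to spell it out before the conditions in $(3)$ actually drop out.
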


We remark here that for $r_0= \infty$, (\ref{type (p,r)}) should be replaced by
\begin{equation*}
\lambda \, \nu \left( \left\lbrace y \in Y : |(Tf)(y)| > \lambda   \right\rbrace \right)^{\frac{1}{p}} \leq C_{p,\infty} \, \sup_{t>0} \, t \, \nu \left( \left\lbrace x \in X : |f(x)| > t   \right\rbrace \right)^{\frac{1}{p}}.
\end{equation*}
An $r$-quasilinear operator $T$ satisfying such an inequality is said to be of weak-type $(p,\infty)$. The principal result in the paper \cite{KPP14} of Kerman, Phipps and Pick yields an analogue of \cref{Combined main theorem} for the class $ W((p_0,\infty),(p_1,\infty);\mu,\nu)$, $1<p_0<p_1<\infty$, and interestingly Zygmund-Str\"omberg condition (\ref{Zygmund and Stromberg conditions labeled first time}) characterizes this class as well.

%\begin{theorem}[R. Kerman, C. Phipps and L. Pick, {\cite{KPP14}}]
%Let $p_0$, $p_1$, $\Phi_1$ and $\Phi_2$ be as in \cref{Combined main theorem}. Then, the following are equivalent:
%\begin{enumerate}
%\item [(1)] To each $T \in W((p_0,\infty),(p_1,\infty);\mu,\nu)$ there corresponds $C>0$, depending on $C_{p_0,\infty}$ and $C_{p_1,\infty}$, such that
%\begin{equation*}
%\| Tf \|_{L_{\Phi_1}(Y,\nu)} \leq C \| f \|_{L_{\Phi_2}(X,\mu)},
%\end{equation*}
%for all $f \in S(X,\mu)$;
%\item [(2)] To each $T \in W((p_0,\infty),(p_1,\infty);\mu,\nu)$ there corresponds $K>0$, depending on $C_{p_0,\infty}$ and $C_{p_1,\infty}$ such that
%\begin{equation*}
%\int_{Y} \Phi_1(|(Tf)(y)|) d\nu(y) \leq \int_{X} \Phi_2(K|f(x)|)d\mu(x),
%\end{equation*}
%for all $f \in S(X,\mu)$;
%\item [(3)] There exist constants $A,B>0$ such that, for all $t>0$,
%\begin{equation*}
%t^{p_0}\int_{0}^t \frac{\Phi_1(s)}{s^{p_0+1}} ds \leq \Phi_2(A t)
%\end{equation*}
%and
%\begin{equation*}
%t^{p_1}\int_t^{\infty} \frac{\Phi_1(s)}{s^{p_1+1}} ds \leq  \Phi_2(B t).
%\end{equation*}
%%that is, Zygmund-Str\"omberg conditions (\ref{Zygmund and Stromberg conditions labeled first time}) hold for $\Phi_1$ and $\Phi_2$.
%\end{enumerate}
%\end{theorem}

Let $(X,\mu)$, $(Y,\nu)$, $p_0$, $r_0$, $p_1$ and $r_1$ be as in \Cref{Combined main theorem}. To study the  behaviour of an $r$-quasilinear operator $T$ in $W((p_0,r_0),(p_1,r_1);\mu,\nu)$ we transfer  considerations to the measure space $(\mathbb{R_+},m)$, $m$ being Lebesgue measure. This is done using rearrangements.

So, if $f \in M(X,\mu)$, say, its distribution function, $\lambda_{f,\mu}$, with respect to $\mu$, is given at $s \in \mathbb{R_+}$ by  $\lambda_{f,\mu}(s)= \mu  \left( \left\lbrace  x \in X : |f(x)| > s  \right\rbrace \right)$.
%\begin{equation*}
%\lambda_{f,\mu}(s)= \mu  \left( \left\lbrace  x \in X : |f(x)| > s  \right\rbrace \right).
%\end{equation*}
This nonnegative function is nonincreasing on $\mathbb{R_+}$ and so has a unique right-continuous generalized inverse,
\begin{equation}\label{decreasing rearrangements S}
f^{*_{\mu}} = \lambda_{f,\mu}^{-1},
\end{equation}
called the decreasing rearrangement of $f$.  We shall also use notation  $f^{*}$ instead of $f^{*_{\mu}}$ whenever there is no possible confusion.

Following Calder\'on \cite{Ca66} we show that given $T$ in $W((p_0,r_0),(p_1,r_1);\mu,\nu)$ and $f \in S(X,\mu)$,

\begin{equation}\label{eq:Dominance of Calderon operator for joined class introduction S}
\begin{split}
(Tf)^*(t) & \leq K  \left[    \left( t^{-\frac{r_0}{p_0}} \int_0^{t}  f^*(s)^{r_0} s^{\frac{r_0}{p_0}-1}ds \right)^{\frac{1}{r_0}}  + \left( t^{-\frac{r_1}{p_1}} \int_{t}^{\infty}  f^*(s)^{r_1} s^{\frac{r_1}{p_1}-1}ds \right)^{\frac{1}{r_1}}    \right]\\
& := K \left[ (H^{p_0,r_0}f^*)(t)+ (H_{p_1,r_1}f^*)(t) \right],
\end{split}
\end{equation}
for all $t \in \mathbb{R_+}$, where the rearrangement of $Tf$ is with respect to $\nu$, the rearrangement of $f$ is with respect to $\mu$
 and $K>0$ depends on $T$, but not on $f$. The operators $H^{p_0,r_0}$ and $H_{p_1,r_1}$ are, respectively, natural generalizations of the classical Hardy operator and of its dual operator. %They appear, for example, in Montgomery-Smith \cite{Mont96}.
 
From the inequality (\ref{eq:Dominance of Calderon operator for joined class introduction S}) and other considerations we are able to show that, for a pair of Young functions, $\Phi_1$ and $\Phi_2$, one has 
\begin{equation}\label{norm inequality for generic operator S}
\| Tf \|_{L_{\Phi_1}(Y,\nu)} \leq C(T) \| f \|_{L_{\Phi_2}(X,\mu)}, \ \ \ f \in S(X,\mu),
\end{equation}
for \emph{all} $T \in W((p_0,r_0),(p_1,r_1);\mu,\nu)$ if and only if

\begin{align}\label{equivalent two norm inequalities Hpr and Hqr S}
\parallel H^{p_0,r_0}g \parallel _{L_{\Phi_1}(\mathbb{R_+},m)} \leq C_1 \parallel g \parallel_{L_{\Phi_2}(\mathbb{R_+},m)} \hspace*{5cm} \nonumber\\
 and \hspace*{15cm} \\
\parallel H_{p_1,r_1}g \parallel _{L_{\Phi_1}(\mathbb{R_+},m)} \leq C_2 \parallel g \parallel_{L_{\Phi_2}(\mathbb{R_+},m)}, \hspace*{4.9cm} \nonumber
\end{align}
for all nonnegative, nonincreasing $g$ on $\mathbb{R_+}$.

The inequalities in (\ref{equivalent two norm inequalities Hpr and Hqr S}) suggest working with the class $W((p_0,r_0),(\infty,\infty);\mu,\nu)$ and $W((1,1),$ $(p_1,r_1);\mu,\nu)$ instead of $W((p_0,r_0),(p_1,r_1);\mu,\nu)$. This, indeed, allows us to solve the problem at hand, Theorem A. 

In \Cref{Class W(prInfinity)} we take up the class $W((p_0,r_0),(\infty,\infty);\mu,\nu)$ and the first inequality in (\ref{equivalent two norm inequalities Hpr and Hqr S}).

This in turn leads us to considering the boundedness of the operator $H^{p_0,r_0}$ between Orlicz spaces. As the operator $H^{p_0,r_0}$ is dilation-commuting, the first inequality in (\ref{equivalent two norm inequalities Hpr and Hqr S}) is equivalent to the modular inequality

%%The above reduction of the problem to the characterization of the boundedness of the Clader\'on operator $H^{p_0,r_0}+H_{q_1,r_1}$ suggests the breaking of the problem for joint class $W((p_0,r_0),(p_1,r_1);\mu,\nu)$ equivalently to two extreme end-point classes $W((p_0,r_0),(\infty,\infty);\mu,\nu)$ and $W((1,1),(p_1,r_1);\mu,\nu)$.
%
%We now briefly describe how we treat the first of the inequalities in (\ref{equivalent two norm inequalities Hpr and Hqr S}); the way of dealing with the second inequality is similar.
%
%An observation key to our analysis is that the first norm inequality in (\ref{equivalent two norm inequalities Hpr and Hqr S}) is equivalent to the modular inequality 
\begin{equation}\label{modular inequality for Hpr a key observation S}
  \int_{\mathbb{R_+}}\Phi_1((H^{p_0,r_0}f^*)(t))dt \leq \int_{\mathbb{R_+}}\Phi_2(Kf^*(s))ds ,
\end{equation}
where $f \in M(X,\mu)$.

Further work allows us to reduce this inequality to a weighted dual Hardy inequality for nonnegative, nonincreasing functions $g$, namely, to an inequality of the form
\begin{equation*}
\int_{\mathbb{R_+}}     \left(   \int_{x}^{\infty} g(y)dy \right)^{p_0/r_0}  w(x) dx 
\leq B  \int_{\mathbb{R_+}}  g(y) ^{p_0/r_0}    v(y)dy,
\end{equation*}
involving certain weights $w$ and $v$ on $\mathbb{R_+}$.

As seen from the statement of \Cref{Combined main theorem} we are then able to show that (\ref{modular inequality for Hpr a key observation S})  (and hence the first  inequality in (\ref{equivalent two norm inequalities Hpr and Hqr S})) holds if and only if
%\begin{equation*}
%\left(\int_{t}^{\infty}  \frac{\phi_2(Ls)}{\Phi_2(Ls)^{\left( \frac{p_0}{r_0} \right)'}} s^{r \big(\frac{p_0}{r_0} \big)'} ds \right)^{\frac{1}{\big(\frac{p_0}{r_0} \big)'}} \left( \int_{0}^{t}\frac{ \Phi_1(s) }{s^{p_0+1}}ds \right)^{\frac{1}{\big(\frac{p_0}{r_0} \big)}} \leq A, \ \ \ t \in \mathbb{R_+},
%\end{equation*}
\begin{equation*}
\left(\int_{t}^{\infty}  \frac{\phi_2(Ls)}{\Phi_2(Ls)^{\rho_0'}} s^{r \rho_0'} ds \right)^{\frac{1}{\rho_0'}} \left( \int_{0}^{t}\frac{ \Phi_1(s) }{s^{p_0+1}}ds \right)^{\frac{1}{\rho_0}} \leq A,
\end{equation*}
for all $t \in \mathbb{R_+}$, where $\rho_0= \frac{p_0}{r_0}$, $1 \leq r_0 <p_0$.

By other means we prove (\ref{modular inequality for Hpr a key observation S}) equivalent to
\begin{equation*}
t^{p_0} \int_0^t \frac{\Phi_1(s)}{s^{p_0+1}} ds \leq B \Phi_2(Kt),
\end{equation*}
for all $t \in \mathbb{R_+}$, when $r_0 \geq p_0$.

In \Cref{Class W(11qr)}, we consider the class $W((1,1),(p_1,r_1);\mu,\nu)$. The corresponding Calder\'on operator that arises in this case is $P+H_{p_1,r_1}$, where operator $P=H^{1,1}$.
The necessary and sufficient conditions for the boundedness of $P$ between Orlicz spaces are already known, for example in \cite{Ci99}. The treatment of the operator $H_{p_1,r_1}$ is is similar to the one for $H^{p_0,r_0}$. The final result, then, combines the conditions for $P$ and $H_{p_1,r_1}$ to arrive at conditions for the class $W((1,1),(p_1,r_1);\mu,\nu)$.

%In this chapter we have given the Marcinkiewicz interpolation theorem for operators in the class $W((1,1),(q,r);\mu,\nu)$. The basic idea of tackling this class is similar to the one in Chapter 4.
%%Our main results, depending on whether $1\leq r<q$ or $q \leq r < \infty,$ are  \Cref{main result for the Wqr class 1} and \Cref{main result for the Wqr class 2}, respectively. 

In Section 5, we present the Marcinkiewicz interpolation theorem  for the joint class $W((p_0,r_0),$ $(p_1,r_1);\mu,\nu)$, $1 < p_0 < p_1 < \infty$, $1 \leq r_1, r_2 < \infty$, combining the results of \Cref{Class W(prInfinity)} and \Cref{Class W(11qr)}.

 In the concluding Section 6, we give an example to  compare various conditions obtained in \Cref{Combined main theorem}. 

%The proof of Theorem A, which is for the joint class $W((p_0,r_0),$ $(p_1,r_1);\mu,\nu)$, $1 < p_0 < p_1 < \infty$, $1 \leq r_1, r_2 < \infty$, appears in Section 5 and is obtained by combining the corresponding interpolation result for the classes $W((p_0,r_0),(\infty,\infty);\mu,\nu)$ and $W((1,1),(p_1,r_1);\mu,\nu)$. The class $W((p_0,r_0),(\infty,\infty);\mu,\nu)$ is dealt with in \Cref{Class W(prInfinity)} and $W((1,1),(p_1,r_1);\mu,\nu)$ in \Cref{Class W(11qr)}. In the concluding Section 6, we give an example to  compare various conditions obtained in \Cref{Combined main theorem}. Section 2 to follow outlines necessary background on Orlicz spaces, Lorentz spaces and some construction that will be needed later in the paper.

\section{Background}\label{Background}
\subsection{}
Suppose $(X,\mu)$ is a $\sigma$-finite measure space. Given $f \in M(X,\mu)$, we define the decreasing rearrangement, $f^{*_{\mu}}$, of $f$ by
\begin{equation*}
f^{*_{\mu}}(t)=\inf \{s>0:\lambda_{f,\mu}(s)\leq t\},
\end{equation*}
where, $t \in \mathbb{R_+}$ and $\lambda_{f,\mu}$ is the \emph{distribution function} of $f$ given by
%The \emph{distribution function} $\lambda_{f,\mu}$ of a function $f$ in $M(X,\mu)$ is given at $s \in \mathbb{R_+}$ by
\begin{equation}\label{distr fn}
\lambda_{f,\mu}(s)=\mu \left( \{x \in X :|f(x)| > s \} \right), \ \ \ \text{for} \ s \in \mathbb{R_+}.
\end{equation}
We remark here that the dependence of $f^*$ on $\mu$ will usually be clear from the context in which it appears. When we wish to emphasize the dependence we will use the notation $f^{*_{\mu}}$ rather than $f^*$.

Two functions $f \in M(X,\mu)$ and $g \in M(Y,\nu)$ are said to be \emph{equimeasurable} if they have the same distribution function, that is, if $\lambda_{f,\mu}(s)= \lambda_{g,\nu}(s)$ for all $s \in \mathbb{R_+}$.

The decreasing rearrangement $f^{*}$ satisfies the following inequality of Hardy and Littlewood: For $f,g \in M_{+}(X,\mu)$,
\begin{equation}
\int_{X}f(x)g(x)d\mu(x) \leq \int_{\mathbb{R_+}}f^*(t)g^*(t)dt.
\end{equation}

The operation of rearrangement is not sublinear though it satisfies
\begin{equation}\label{eq:subdr}
(f+g)^*(t_1 + t_2) \leq f^*(t_1) + g^*(t_2),
\end{equation}
where $f,g \in M(X,\mu)$ and $t_1,t_2 \in  \mathbb{R_+}.$

However, the operator $f \mapsto f^{**}$ is sublinear, namely,
\begin{equation}
(f+g)^{**}(t) \leq f^{**}(t) + g^{**}(t),
\end{equation}
for all $t \in \mathbb{R_+}$, where the maximal function $f^{**}$ of $f$ (or the Hardy averaging operator at $f$ ) is defined, at $t \in \mathbb{R_+}$, by
\begin{equation}
f^{**}(t)= \frac{1}{t}\int_0^t f^*(s)ds.
\end{equation}
%\vspace*{0.1mm} 
\subsection{}
Let $(X,\mu)$ be a $\sigma-$finite measure space and suppose $0< p< \infty $, $0< r \leq \infty $. The \emph{Lorentz space} $L_{p,r}(X,\mu)$ consists of all $f$ in $M(X,\mu)$ for which the quantity
\begin{equation*}
 \| f \|_{p,r} = \begin{cases}
              {\displaystyle \left( {\frac{r}{p}} \int_{\mathbb{R_+}} \left( t^{\frac{1}{p}}f^*(t) \right) ^r \frac{dt}{t} \right)^{\frac{1}{r}} },  & 0<r< \infty, \\
              {\displaystyle~   \sup_{0<t< \infty} t^{\frac{1}{p}}f^*(t) },              & r = \infty,
       \end{cases}
\end{equation*}
is finite.

A useful alternative expression for $\| f \|_{p,r}$, $r< \infty$, is
\begin{equation*}
\| f \|_{p,r} = \left( r \int_{\mathbb{R_+}} \lambda_{f,\mu}(s)^{\frac{r}{p}} s^{r-1} ds \right)^{\frac{1}{r}}.
\end{equation*}
This indeed was the form of $\| \cdot \|_{p,r}$ used in Introduction.

For any fixed $p$, $0<p<\infty$, the Lorentz space $L_{p,r}$ gets bigger as the secondary exponent $r$ increases: If $0<r_1<r_2 \leq \infty$. Then,
\begin{equation}\label{Lorentz mono}
\|f\|_{p,r_2} \leq \|f\|_{p,r_1},
\end{equation}
for all $f \in M(X,\mu)$. In particular, one has the imbedding $L_{p,r_1}(X,\mu) \hookrightarrow L_{p,r_2}(X,\mu) $.

\subsection{}
A Young function is convex and that for $t \in \mathbb{R_+}$,
\begin{equation}\label{eq:ygp}
\Phi(t) \leq t\phi(t) \leq \Phi(2t).
\end{equation}  
We associate to the Orlicz space $L_{\Phi}$ an another Orlicz space $L_{\Psi}$ that has the same relationship to $L_{\Phi}$ as the Lebesgue space $L_{p'}$ does to the Lebesgue spaces $L_{p}$, where $p'= \textstyle\frac{p}{p-1}$.

Let $\Phi(t)=\int_0^t \phi(s)ds$ be a Young function. Let $\psi$ be \emph{left-continuous inverse} of $\phi$, that is, for $t \in \mathbb{R_+}$
\begin{equation}
\psi(t)= \inf \{ s:\phi(s) \geq t \}.
\end{equation}
 Then the function $\Psi$ defined as 
\begin{equation*}
\Psi(t)= \int_0^t \psi(s)ds,
\end{equation*}
for all $t \in \mathbb{R_+}$, is called the \emph{complementary Young function} of $\Phi$ and satisfies the following basic inequality, known as \emph{Young's inequality}: For every $s,t \in \mathbb{R_+}$,
\begin{equation}\label{Young ineq}
st \leq \Phi(s) + \Psi(t).
\end{equation}

Let $\Phi$ and $\Psi$ be complementary Young functions. The \emph{Orlicz norm} on $L_{\Phi}$ is defined as
\begin{equation}
\|f\|_{L_{\Phi}}'  := \sup \left\lbrace \int_X |f(x)g(x)| d\mu(x) :  \int_X \Psi(|g(x)|)d\mu(x) \leq 1  \right\rbrace.
\end{equation}

The Orlicz norms and gauge norm, (\ref{gauge norm}), are equivalent:
\begin{equation*}
\|f\|_{L_{\Phi}} \leq \|f\|_{L_{\Phi}}' \leq  2\|f\|_{L_{\Phi}}.
\end{equation*}

%\begin{corollary}
%Let $\Phi$ and $\Psi$ be complementary Young functions. Then $L_{\Phi}$ equipped with the Luxemburg norm, has for its associate space the space $L_{\Psi}$ equipped with the Orlicz norm.
%\end{corollary}

\subsection{}
Let $(X,\mu)$ and $(Y,\nu)$ be $\sigma$-finite measure spaces with $\mu(X)=\nu(Y)=\infty$. It will be useful in our work to be able to associate to an operator $S: M_+(\mathbb{R_+},m) \rightarrow M_+(\mathbb{R_+},m)$ an operator $\tilde{S}: M_+(X,\mu) \rightarrow M_+(Y,\nu)$, having the property
\begin{equation*}
( \tilde{S}f )^{*_{\nu}}(t) = \left( S f^{*_{\mu}} \right)^{*_{m}}(t),
\end{equation*}
for all $f \in M_+(X,\mu)$ and $t \in \mathbb{R_+}$.  

To do this we require a result on measure-preserving transformations from \cite[page $174$]{Ha50}, which we now describe.

Denote by $Y_{\text{Fin}}$ the class of $\nu$-measurable subsets $E$ of $Y$ with $\nu(E)<\infty$. The functional $d$ defined on $Y_{\text{Fin}} \times Y_{\text{Fin}} $ by
\begin{equation*}
d(E,F)=\nu(E \triangle F), 
\end{equation*}
for all $E,F \in Y_{\text{Fin}}$, is a metric on $Y_{\text{Fin}}$. If the metric space $(Y_{\text{Fin}},d)$ is separable, then the measure space $(Y,\nu)$ is said to be separable. 

It is shown in \cite[page $174$]{Ha50} that, when $(Y,\nu)$ is separable and nonatomic, there exists a $1 \text{-}1$ transformation $\tau$ from $Y_{\text{Fin}}$ onto ${\mathbb{R_+}}_{\text{Fin}}$, with
\begin{equation*}
\tau (E_1-E_2)= \tau(E_1)- \tau(E_2), \ \ \ \tau \left( \bigcup_{n=1}^{\infty} E_n \right)= \bigcup_{n=1}^{\infty} \tau(E_n)
\end{equation*}
and $m(\tau(E))=\nu(E)$, for $E_n,E \in Y_{\text{Fin}}$, $n=1,2,...$ As $(Y,\nu)$ is $\sigma$-finite, $\tau$ can be extended to all $\nu$-measurable subsets of $Y$.

We now state and prove our result concerning $S$ and $\tilde{S}$.
\begin{theorem}\label{construction of Pull back of operator S}
Suppose $(X,\mu)$ and $(Y,\nu)$ are $\sigma$-finite measure spaces, with $\mu(X)=\nu(Y)=\infty$. Assume, in addition, that $(Y,\nu)$ is nonatomic and separable and denote by $\tau$ the measure-preserving transformation between the $\nu$-measurable subsets of $Y$ and the Lebesgue measurable subsets of $\mathbb{R_+}$ described above.

Given the operator $S: M_+(\mathbb{R_+},m) \rightarrow M_+(\mathbb{R_+},m)$, define the operator $\tilde{S}$ at $f \in M_+(X,\mu)$ to be the Radon-Nikodym derivative of the $\nu$-absolutely continuous measure $\lambda$, given at the $\nu$-measurable set $E$ by
\begin{equation*}
\lambda(E)=\int_{\tau(E)} S f^{*_{\mu}}.
\end{equation*}
Then,
\begin{equation}\label{operator Stilde}
(\tilde{S}f)^{*_{\nu}}=\left( Sf^{*_{\mu}} \right)^{*_{m}}, \ \ \ m \text{-a.e.}
\end{equation}
\end{theorem}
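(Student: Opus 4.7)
The approach is to identify $\tilde{S}f$ with the image of $g := Sf^{*_{\mu}}$ under the natural pullback from measurable functions on $(\mathbb{R}_+,m)$ to measurable functions on $(Y,\nu)$ induced by the measure-algebra isomorphism $\tau$. Once this identification is made, the equimeasurability of $\tilde{S}f$ with $g$ is immediate, and the desired equality $(\tilde{S}f)^{*_{\nu}} = g^{*_{m}}$ drops out of the fact that equimeasurable functions share the same decreasing rearrangement.

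First I would construct the pullback on simple functions. Approximate $g \in M_+(\mathbb{R}_+,m)$ by an increasing sequence of nonnegative simple functions $g_n = \sum_{i=1}^{k_n} c_i^{(n)} \chi_{A_i^{(n)}}$ with pairwise disjoint, finite-measure $A_i^{(n)} \subseteq \mathbb{R}_+$, arranged so that the partitions refine as $n$ grows, and define
\begin{equation*}
\tilde{g}_n := \sum_{i=1}^{k_n} c_i^{(n)} \chi_{\tau^{-1}(A_i^{(n)})}.
\end{equation*}
Since $\tau$ is a measure-preserving bijection of the measure algebras respecting disjoint unions, the sets $\tau^{-1}(A_i^{(n)})$ are pairwise disjoint with $\nu(\tau^{-1}(A_i^{(n)})) = m(A_i^{(n)})$; hence $g_n$ and $\tilde{g}_n$ have identical distribution functions, and, for every $\nu$-measurable $E$,
\begin{equation*}
\int_E \tilde{g}_n \, d\nu = \sum_i c_i^{(n)} \, \nu(E \cap \tau^{-1}(A_i^{(n)})) = \sum_i c_i^{(n)} \, m(\tau(E) \cap A_i^{(n)}) = \int_{\tau(E)} g_n \, dm.
\end{equation*}

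Setting $\tilde{g}(y) := \lim_n \tilde{g}_n(y)$ (the limit existing $\nu$-a.e.\ by monotonicity), one application of the monotone convergence theorem to both sides of the above identity yields $\int_E \tilde{g}\, d\nu = \int_{\tau(E)} g\, dm = \lambda(E)$ for every $\nu$-measurable $E$; a second application to the distribution functions $\lambda_{\tilde{g}_n,\nu}(s) = \lambda_{g_n,m}(s)$ gives $\lambda_{\tilde{g},\nu}(s) = \lambda_{g,m}(s)$ for all $s > 0$, so $\tilde{g}$ and $g$ are equimeasurable. Uniqueness of the Radon-Nikodym derivative then forces $\tilde{S}f = \tilde{g}$ $\nu$-a.e., and therefore
\begin{equation*}
(\tilde{S}f)^{*_{\nu}} = \tilde{g}^{*_{\nu}} = g^{*_{m}} = (Sf^{*_{\mu}})^{*_{m}} \quad m\text{-a.e.}
\end{equation*}

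The main subtlety I anticipate is that $\tau$ is defined only on the measure algebra (modulo null sets), not pointwise, so the pullback of functions cannot be constructed by pre-composition with a point map; it must be obtained by simple-function approximation and passage to the limit. The countable-union compatibility of $\tau$ supplied by the Halmos construction, combined with the refinement arrangement of the partitions and monotone convergence, is exactly what legitimizes this passage and ensures that $\tilde{g}$ is a well-defined $\nu$-measurable function with the claimed integral representation.
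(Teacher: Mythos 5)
Your proposal is correct but takes a genuinely different route from the paper's. The paper observes that the claimed identity $(\tilde{S}f)^{*_{\nu}} = (Sf^{*_{\mu}})^{*_{m}}$ is equivalent to equality of the distribution functions, and establishes that by showing $E_{Sf^{*_{\mu}}}(u) = \tau\bigl(E_{\tilde{S}f}(u)\bigr)$ modulo null sets; this is extracted directly from the defining identity $\int_E \tilde{S}f\,d\nu = \int_{\tau(E)} Sf^{*_{\mu}}\,dm$ for $E \subset E_{\tilde{S}f}(u)$, together with the Lebesgue differentiation theorem (used to deduce a pointwise a.e.\ lower bound on $Sf^{*_{\mu}}$ over $\tau(E_{\tilde{S}f}(u))$, and the analogous bound in the reverse direction). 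You instead bypass the analysis of level sets entirely: you construct the pullback $\tilde{g}$ of $g = Sf^{*_{\mu}}$ across $\tau$ via simple-function approximation with refining partitions and monotone convergence, observe that $\tilde{g}$ satisfies the same integral identity as $\tilde{S}f$ and hence equals $\tilde{S}f$ $\nu$-a.e.\ by Radon--Nikodym uniqueness, and read off equimeasurability from the approximating simple functions. Your argument is more constructive and replaces the appeal to Lebesgue differentiation by an appeal to monotone convergence plus RN uniqueness; it also yields as a by-product a concrete realization of the pullback map on $M_+(\mathbb{R_+},m)$ induced by the measure-algebra isomorphism $\tau$, which is not made explicit in the paper. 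The paper's argument is shorter precisely because it never needs to build that pullback; it reasons entirely with the $\lambda$-integrals. Both rely on the same structural properties of $\tau$ (preservation of differences, countable unions, and measure) and both need $\sigma$-finiteness to justify the Radon--Nikodym step. The one point you flagged yourself --- that $\tau$ is only a measure-algebra isomorphism and not a point map, so the partitions must be chosen to refine in order that $\tilde{g}_n$ increase $\nu$-a.e.\ --- is indeed the crux of the approximation construction, and your handling of it is adequate.
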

\begin{proof}
That $\lambda$ is a measure on the $\nu$-measurable subsets of $Y$ follows from the properties of $\tau$, as does the absolute continuity of $\lambda$ with respect to $\nu$.

Now, (\ref{operator Stilde}) is equivalent to 
\begin{equation}\label{distribution functions action operator Stilde}
m \left( E_{Sf^{*_{\mu}}}(u) \right)= \nu \left( E_{ \tilde{S} f}(u) \right),
\end{equation}
where, for $u \in \mathbb{R_+}$,
\begin{equation*}
E_{Sf^{*_{\mu}}}(u)= \left\lbrace s \in \mathbb{R_+} : \left( Sf^{*_{\mu}}\right)(s)> u  \right\rbrace \ \ \  \text{and} \ \ \  E_{ \tilde{S} f}(u)= \left\lbrace y \in Y : ( \tilde{S} f ) (y)> u  \right\rbrace.
\end{equation*}
We claim that, modulo sets of measure zero,
\begin{equation*}
E_{Sf^{*_{\mu}}}(u)=  \tau \left( E_{ \tilde{S} f}(u) \right),
\end{equation*}
which ensures (\ref{distribution functions action operator Stilde}). Indeed, from
\begin{equation*}
\int_{E} ( \tilde{S}f )(y) d\nu(y)= \lambda (E)= \int_{\tau(E)} S f^{*_{\mu}}(t)dt
\end{equation*}
for all $\nu$-measurable $E \subset E_{ \tilde{S} f}(u)$ and the Lebesgue differentiation theorem, we conclude that, modulo sets of measure zero,
\begin{equation*}
\tau \left( E_{ \tilde{S} f}(u) \right) \subset  E_{Sf^{*_{\mu}}}(u).
\end{equation*}
A similar argument yield,
\begin{equation*}
\tau^{-1} \left( E_{Sf^{*_{\mu}}}(u) \right) \subset E_{ \tilde{S} f}(u),
\end{equation*}
that is,
\begin{equation*}
E_{Sf^{*_{\mu}}}(u) \subset \tau \left( E_{ \tilde{S} f}(u) \right),
\end{equation*}
once again, modulo sets of measure zero.
\end{proof}

\begin{lemma}\label{S-tilda is nu-quasilinear if S is}
Let the measures $\mu$ and $\nu$ and the operators $S$ and $\tilde{S}$ be as in \Cref{construction of Pull back of operator S}. Then, $\tilde{S}$ is $r$-quasilinear on $M_+(X,\mu)$, provided $S$ is monotone and dilation-commuting as well as $r$-quasilinear on $M_+(X,\mu)$.
\end{lemma}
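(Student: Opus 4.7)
The plan is to reduce the $r$-quasilinearity of $\tilde{S}$ on $M_+(X,\mu)$ to the $r$-quasilinearity of $S$ on $M_+(\mathbb{R}_+,m)$ via the identity $(\tilde{S}g)^{*_\nu} = (Sg^{*_\mu})^{*_m}$ from \Cref{construction of Pull back of operator S}. Fix $f_1,f_2 \in M_+(X,\mu)$ and $t \in \mathbb{R_+}$. Applying the identity to $f_1+f_2$ gives
\begin{equation*}
[\tilde S(f_1+f_2)]^{*_\nu}(t) = [S(f_1+f_2)^{*_\mu}]^{*_m}(t).
\end{equation*}

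First I would exploit the subadditivity (\ref{eq:subdr}) with $t_1=t_2=s/2$ to write $(f_1+f_2)^{*_\mu}(s) \leq f_1^{*_\mu}(s/2) + f_2^{*_\mu}(s/2)$ for all $s>0$. Denoting the dilation $(D_a h)(s) := h(s/a)$, this reads $(f_1+f_2)^{*_\mu} \leq D_2 f_1^{*_\mu} + D_2 f_2^{*_\mu}$ pointwise. Since $S$ is monotone, applying $S$ preserves this bound pointwise, and then taking the decreasing rearrangement (which is order-preserving) gives
\begin{equation*}
[S(f_1+f_2)^{*_\mu}]^{*_m}(t) \leq \bigl[S\bigl(D_2 f_1^{*_\mu} + D_2 f_2^{*_\mu}\bigr)\bigr]^{*_m}(t).
\end{equation*}

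Next I would apply the $r$-quasilinearity of $S$ on $M_+(\mathbb{R_+},m)$ to the two functions $D_2 f_1^{*_\mu}$ and $D_2 f_2^{*_\mu}$, obtaining constants $C>0$ and $0<c<1$ such that the last quantity is bounded by $C\bigl[(S D_2 f_1^{*_\mu})^{*_m}(ct) + (S D_2 f_2^{*_\mu})^{*_m}(ct)\bigr]$. Then the dilation-commuting hypothesis on $S$ yields $S(D_2 f_i^{*_\mu}) = D_2 (S f_i^{*_\mu})$, and since $(D_a h)^{*_m}(s) = h^{*_m}(s/a)$ the rearrangement of a dilation is itself a dilation, producing
\begin{equation*}
(SD_2 f_i^{*_\mu})^{*_m}(ct) = (S f_i^{*_\mu})^{*_m}(ct/2), \quad i=1,2.
\end{equation*}
A final invocation of $(\tilde S f_i)^{*_\nu} = (Sf_i^{*_\mu})^{*_m}$ turns the right-hand side into $C\bigl[(\tilde S f_1)^{*_\nu}(c't) + (\tilde S f_2)^{*_\nu}(c't)\bigr]$ with $c' = c/2 \in (0,1)$, which is the desired $r$-quasilinearity of $\tilde S$.

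The argument is short and essentially bookkeeping; the only real subtlety is to make sure that each structural hypothesis on $S$ is used at the right place: monotonicity to pass from $(f_1+f_2)^{*_\mu}$ to the sum of the half-dilations, dilation-commutativity to interchange $S$ with the factor of $2$, and $r$-quasilinearity on $\mathbb{R_+}$ to split the sum. The identity from \Cref{construction of Pull back of operator S} is what lets us transport the whole calculation between the two sides at the beginning and the end.
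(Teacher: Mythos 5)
Your argument is correct and follows essentially the same chain of inequalities as the paper's proof: transport via the identity $(\tilde{S}f)^{*_\nu}=(Sf^{*_\mu})^{*_m}$, then subadditivity of rearrangement with the dilation by $2$, monotonicity of $S$ and of rearrangement, $r$-quasilinearity of $S$, and finally dilation-commutativity together with $(D_2 h)^{*_m}(s)=h^{*_m}(s/2)$ to fold the factor of $2$ into the final dilation constant $c/2$. The only difference is that you make explicit where each structural hypothesis on $S$ is used, which the paper leaves implicit.
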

\begin{proof}
Given $f$ and $g$ in $M_+(X,\mu)$ one has, by (\ref{operator Stilde}),
\begin{equation*}
\begin{split}
(\tilde{S}(f+g))^{*_{\nu}}(t) 
& = \left[  S \left( (f+g)^{*_{\mu}} \right) \right]^{*_{m}}(t)\\
& \leq  \left[  S \left( f^{*_{\mu}}({\textstyle\frac{\cdot}{2}})+ g^{*_{\mu}}({\textstyle\frac{\cdot}{2}}) \right) \right]^{*_{m}}(t)\\
& \leq C \left[ \left[   S \left( f^{*_{\mu}}({\textstyle\frac{\cdot}{2}}) \right)    \right]^{*_{m}} (ct)+  \left[   S \left(  g^{*_{\mu}}({\textstyle\frac{\cdot}{2}}) \right)   \right]^{*_{m}} (ct) \right] \\
& = C \left[ \left[   \left( S f^{*_{\mu}} \right) ({\textstyle\frac{\cdot}{2}})    \right]^{*_{m}} (ct)+  \left[   \left( S g^{*_{\mu}} \right) ({\textstyle\frac{\cdot}{2}})    \right]^{*_{m}} (ct) \right] \\
& = C \left[ \left(  S f^{*_{\mu}}   \right)^{*_{m}} ({\textstyle\frac{c}{2}}t)   +  \left(  S g^{*_{\mu}}   \right)^{*_{m}} ({\textstyle\frac{c}{2}}t) \right] \\
& = C \left[ (\tilde{S}f)^{*_{\nu}} ({\textstyle\frac{c}{2}}t)  +  (\tilde{S}g)^{*_{\nu}}  ({\textstyle\frac{c}{2}}t)     \right].
\end{split}
\end{equation*}
%%\left[ \right]
%\begin{equation*}
%\begin{split}
%(\tilde{S}(f+g))^{*_{\nu}}(t) 
%& = S \left( (f+g)^{*_{\mu}} \right)(t)\\
%& \leq  S \left( f^{*_{\mu}}({\textstyle\frac{\cdot}{2}})+ g^{*_{\mu}}({\textstyle\frac{\cdot}{2}}) \right)(t)\\
%& \leq C \left[ S \left( f^{*_{\mu}}({\textstyle\frac{\cdot}{2}}) \right) (ct)+  S \left(  g^{*_{\mu}}({\textstyle\frac{\cdot}{2}}) \right)   (ct) \right] \\
%& = C \left[ S \left( f^{*_{\mu}}({\textstyle\frac{c}{2}}t) \right) +  S \left(  g^{*_{\mu}}({\textstyle\frac{c}{2}}t) \right)    \right] \\
%& = C \left[ (\tilde{S}f)^{*_{\nu}} ({\textstyle\frac{c}{2}}t)  +  (\tilde{S}g)^{*_{\nu}}  ({\textstyle\frac{c}{2}}t)     \right].
%\end{split}
%\end{equation*}
\end{proof}

Recall that given indices $p_0,p_1,r_0$ and $r_1$ satisfying $1<p_0<p_1<\infty$, $1 \leq r_1,r_2 < \infty$, we say that an $r$-quasilinear operator $T : S(X,\mu) \rightarrow M(Y,\nu)$ is in the class $W( (p_0,r_0),(p_1,r_1);\mu,\nu )$ if
\begin{equation*}
\| Tf \|_{L_{p_i,\infty}(Y,\nu)} \leq C_{i}  \| f \|_{L_{p_i,r_i}(X,\mu)}
\end{equation*}
where $C_i >0$ is independent of $f \in S(x,\mu)$, $i=1,2$. Such weak-type $(p,r)$ inequalities are equivalent to those in (\ref{type (p,r)}).

We will be concerned with the action of the operators $T \in W( (p_0,r_0),(p_1,r_1);\mu,\nu )$ on Orlicz spaces
\begin{equation*}
L_{\Phi}(X,\mu) \subset L_{p_0,r_0}(X,\mu)  + L_{p_1,r_1}(X,\mu).
\end{equation*}
Our next result treats an important decomposition of functions in $L_{p_0,r_0}(X,\mu)  + L_{p_1,r_1}(X,\mu)$.

\begin{lemma}\label{membership of splitted parts}
Fix indices $p_0,p_1,r_0$ and $r_1$ satisfying $1<p_0<p_1<\infty$, $1 \leq r_0, r_1 < \infty$. Let $(X,\mu)$ be a $\sigma$-finite measure space with (for simplicity) $\mu(X)=\infty$ and suppose $f \in \left( L_{p_0,r_0} + L_{p_1,r_1} \right) (X,\mu)$, $t \in \mathbb{R_+}$. At $x \in X$, set
\begin{equation*}
   f_1(x)= \min[|f(x)|, f^*(t)] \cdot \text{sgn}f(x) \ \ \ \text{and} \ \ \  f_0(x)=f(x)-f_1(x). 
\end{equation*}
Then, $f_0 \in L_{p_0,r_0}(X,\mu)$ and $f_1 \in L_{p_1,r_1}(X,\mu)$.
\end{lemma}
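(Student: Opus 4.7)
The first step is to write down the rearrangements of $f_0$ and $f_1$ explicitly. Since $|f_1(x)| = \min(|f(x)|, f^*(t))$ is a truncation of $|f|$ from above at level $f^*(t)$, a direct computation of distribution functions gives
\begin{equation*}
f_1^{*}(s) = \min(f^{*}(s), f^{*}(t)),
\end{equation*}
i.e., $f_1^*(s) = f^*(t)$ for $s \le t$ and $f_1^*(s) = f^*(s)$ for $s > t$. Similarly, $|f_0(x)| = (|f(x)| - f^*(t))_+$, and one gets $f_0^*(s) = f^*(s) - f^*(t)$ for $s \le t$ and $0$ for $s > t$. Substituting into the Lorentz norm formulae leads to
\begin{equation*}
\|f_0\|_{p_0,r_0}^{r_0} \;\le\; \tfrac{r_0}{p_0}\int_0^t f^*(s)^{r_0}\, s^{r_0/p_0-1}\,ds,
\end{equation*}
\begin{equation*}
\|f_1\|_{p_1,r_1}^{r_1} \;=\; f^*(t)^{r_1}\, t^{r_1/p_1} \;+\; \tfrac{r_1}{p_1}\int_t^\infty f^*(s)^{r_1}\, s^{r_1/p_1-1}\,ds,
\end{equation*}
so the task is to show the two right-hand integrals are finite.

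Next, I would exploit the sum decomposition: write $f = g + h$ with $g \in L_{p_0,r_0}(X,\mu)$ and $h \in L_{p_1,r_1}(X,\mu)$. The inequality \eqref{eq:subdr} applied to $(f)^* = (g+h)^*$ gives $f^*(s) \le g^*(s/2) + h^*(s/2)$, so after a subadditivity estimate $(a+b)^{r} \le 2^{r-1}(a^r + b^r)$, it is enough to bound each of
\begin{equation*}
\int_0^t g^*(s/2)^{r_0}\, s^{r_0/p_0-1}\,ds,\qquad \int_0^t h^*(s/2)^{r_0}\, s^{r_0/p_0-1}\,ds,
\end{equation*}
and analogously for the tail $\int_t^\infty$ with exponents $(p_1,r_1)$. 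After the substitution $u = s/2$, the first of these integrals is a constant multiple of $\|g\|_{p_0,r_0}^{r_0}$ and so is finite; symmetrically, the $h$-part of the tail integral at level $(p_1,r_1)$ is finite.

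The real content is the two cross terms. For the $h$-part of $f_0$, I would use the Lorentz embedding \eqref{Lorentz mono}, namely $L_{p_1,r_1}\hookrightarrow L_{p_1,\infty}$, to obtain the pointwise estimate $h^*(u) \le C\,u^{-1/p_1}$ with $C = \|h\|_{p_1,\infty}$. Plugging this in,
\begin{equation*}
\int_0^{t/2} h^*(u)^{r_0}\, u^{r_0/p_0-1}\,du \;\le\; C^{r_0}\int_0^{t/2} u^{r_0(1/p_0-1/p_1)-1}\,du,
\end{equation*}
and the integral converges at $0$ precisely because $p_0 < p_1$, so the exponent is strictly greater than $-1$. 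A mirror image argument, using $L_{p_0,r_0}\hookrightarrow L_{p_0,\infty}$ to control $g^*(u) \le C'\, u^{-1/p_0}$ and the fact that $1/p_1 - 1/p_0 < 0$, handles the $g$-contribution to $\int_t^\infty f^*(s)^{r_1}s^{r_1/p_1-1}\,ds$, giving convergence at $\infty$. Finally, the plateau term $f^*(t)^{r_1}t^{r_1/p_1}$ is finite since $f^*(t) < \infty$ (as $f \in L_{p_0,r_0} + L_{p_1,r_1}$).

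\emph{Main obstacle.} The one delicate point is the two cross-term integrals; the trick that makes them go is the classical Lorentz-space embedding $L_{p,r} \hookrightarrow L_{p,\infty}$ (for any $r < \infty$), which converts the abstract Lorentz integrability of $g$ and $h$ into the \emph{pointwise} decay $g^*(u) \lesssim u^{-1/p_0}$ and $h^*(u) \lesssim u^{-1/p_1}$. Once these pointwise bounds are in hand, the strict inequality $p_0 < p_1$ guarantees integrability at the relevant endpoint, and the proof concludes.
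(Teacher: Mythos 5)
Your proof is correct, and since the paper states \Cref{membership of splitted parts} without supplying a proof, your argument fills a genuine gap. The computation of $f_0^*$ and $f_1^*$ matches what the paper uses implicitly later (e.g., in the proof of \Cref{Hprdominance}), and the reduction to the two Lorentz integrals is the right frame. The essential step — and you identified it correctly as the "main obstacle" — is controlling the cross terms $\int_0^t h^*(s/2)^{r_0}s^{r_0/p_0-1}\,ds$ and $\int_t^\infty g^*(s/2)^{r_1}s^{r_1/p_1-1}\,ds$, and the mechanism you use, the embedding $L_{p,r}\hookrightarrow L_{p,\infty}$ from \eqref{Lorentz mono} to convert abstract membership into the pointwise decays $g^*(u)\lesssim u^{-1/p_0}$, $h^*(u)\lesssim u^{-1/p_1}$, followed by the strict inequality $p_0<p_1$, is exactly what makes the exponents land on the correct side of $-1$ at each endpoint. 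The subadditivity $f^*(s)\le g^*(s/2)+h^*(s/2)$ from \eqref{eq:subdr}, the elementary bound $(a+b)^r\le 2^{r-1}(a^r+b^r)$, and the finiteness of the plateau term $f^*(t)^{r_1}t^{r_1/p_1}$ (which again follows from $f^*(t)\le g^*(t/2)+h^*(t/2)<\infty$) complete the argument cleanly. No gaps.
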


\section{Interpolation results for the class $W((p,r),(\infty ,\infty);\mu,\nu)$}\label{Class W(prInfinity)}

In the present section we shall give a description of the interpolation  pairs,  $(L_{\Phi_2}(X,\mu),$  $L_{\Phi_1}(Y,\nu))$, of Orlicz spaces for which every $T \in W((p,r),(\infty,\infty);\mu,\nu)$  maps $L_{\Phi_2}(X,\mu)$ boundedly into $L_{\Phi_1}(Y,\nu)$.
%Recall that a $r$-quasilinear operator $T$ is in $W((p,r),(\infty,\infty);\mu,\nu)$ if
%\begin{equation*}
%T : L_{p,r}(X,\mu) \rightarrow L_{p,\infty}(Y,\nu) ~~~ \text{and} ~~~ T : L_{\infty}(X,\mu) \rightarrow L_{\infty}(Y,\nu).
%\end{equation*}
This class  naturally arises, as explained in the Introduction, as an intermediate step in determining the interpolation pairs  $(L_{\Phi_2}(X,\mu), L_{\Phi_1}(Y,\nu))$ for $W((p,r_1),(q,r_2);\mu,\nu)$.
 %Our main results in this section are \Cref{main result 1 class Wpr modified by Kerman} and \Cref{main result 22 class Wpr},  which give necessary and sufficient conditions on the Young functions $\Phi_1$ and $\Phi_2$ so that every $T \in W((p,r),(\infty,\infty);\mu,\nu)$ maps $L_{\Phi_2}(X,\mu)$ boundedly into $L_{\Phi_1}(Y,\nu)$: the first theorem for $1\leq r <p$, the second for $p \leq r < \infty$.

%Our approach to solving this problem is the classical one  explained in \cite[page 141]{BS88}. In particular, by taking rearrangements, we reduce considerations involving two measure spaces $(X,\mu)$ and $(Y,\nu)$, $\mu(X)= \nu(Y)= \infty$, to ones involving the single measure space $(\mathbb{R_+},m)$, where $m$ is  the Lebesgue measure.

The conditions imposed on an operator $T \in W((p,r),(\infty,\infty);\mu,\nu)$ ensure that it is dominated by a Calder\'on operator, $H^{p,r}$, (in the sense of \cite[page 141]{BS88}), namely,
\begin{equation}\label{joint weak type}
(Tf)^*(t)\leq C \, H^{p,r}f^*(t),
\end{equation}
in which $C>0$ is independent of  $f \in (L_{p,r}+L_{\infty})(X,\mu)$ and $t \in \mathbb{R_+}$.  Moreover, $H^{p,r} \in W((p,r),(\infty,\infty);m,m)$ and it is essentially the smallest operator such that (\ref{joint weak type}) holds.

The fundamental interpolation theorem of Calder\'on \cite{Ca66}, see also \cite[Chapter 3, Theorem 5.7]{BS88}, describes the action of operators satisfying (\ref{joint weak type}), \emph{when $r=1$}, on rearrangement-invariant spaces in terms of the boundedness of the Calderon operator on their representative spaces. In \Cref{Calderon theorem}, we formulate a Calderon-type theorem for the operators of the type in (\ref{joint weak type}). 
%In effect our problem boils down to the characterization of the boundedness of the operator $H^{p,r}$ from $L_{\Phi_2}(\mathbb{R_+},m)$ to $L_{\Phi_1}(\mathbb{R_+},m)$.
%Moreover, this estimate is the best possible as $$H^{p,r} \in W((p,r),(\infty,\infty); m,m).$$
Thus, it is enough to characterise those $\Phi_1$ and $\Phi_2$ for which $H^{p,r}$ maps $L_{\Phi_2}(\mathbb{R_+},m)$ boundedly into $L_{\Phi_1}(\mathbb{R_+},m)$. 

As we shall see in \Cref{Hpr dilation invariant},  $H^{p,r}$ is a dilation-commuting operator, and therefore it suffices to work with a modular inequality rather than a norm inequality, as explained in \Cref{EquvlcHprnormandmod}. Using the estimates of the distribution function for $H^{p,r}f^*$ in \Cref{UDEHpr class} and  \Cref{LDEHpr} we are able to reformulate the modular inequality for $H^{p,r}$ involving $\Phi_1$ and $ \Phi_2$ as a weighted Hardy inequality (with weights involving $\phi_1$ and $\phi_2$) on nonnegative, nonincreasing functions. The duality principle of Sawyer \cite{Sw90} then allows us to pass to an equivalent inequality for a Hardy-type operator on nonnegative measurable functions. To conclude, we then arrive at our desired necessary and sufficient conditions by invoking the results of Stepanov, \cite{Stp90} for such inequalities.
%With these simplifications, our next reduction comes from the results  \Cref{UDEHpr class} and  \Cref{LDEHpr}, which give an exact expression of the distribution function of $H^{p,r}f^*$. This then allows us to reformulate the modular inequality for $H^{p,r}$ from $L_{\Phi_2}$ to $L_{\Phi_1}$ into a weighted Hardy type inequality on nonnegative, nonincreasing functions, with weights involving $\Phi_1$ and $ \Phi_2.$ We then use the duality principle of Sawyer, \Cref{Sw}, to pass to an equivalent inequality on nonnegative measurable functions. To conclude, we arrive at the desired necessary and sufficient conditions by using results of Stepanov, \cite{Stp}, for this inequality. 

Our conditions depend on $r$ for $1\leq r<p$, see \Cref{main result 1 class Wpr modified by Kerman}, and are, interestingly, independent of $r$ for $p \leq r < \infty$, see \Cref{main result 22 class Wpr}.
For $1\leq r<p$, conditions, that we get, can readily be seen as an extension of the earlier results of  
Cianchi \cite{Ci99} for the case $r=1$. For $p \leq r < \infty,$ we get the well-known conditions as given by Zygmund \cite[Theorem 4.22, page 116]{Zy57}  and Stromberg \cite{Str79}.

\subsection{\texorpdfstring{A Calder\'on-type theorem}{}}\label{Calderon type theorem}
%\section{\texorpdfstring{ The Calder\'on-type theorem for operators in the class $W((p,r),(\infty ,\infty);\mu,\nu) $, $ 1 \leq p < \infty, 1 \leq r < \infty$}{}}\label{Calderon type theorem}
%\section{ \texorpdfstring  {Weak-type inequality and corresponding Hardy inequality}{}}

%\section{ \texorpdfstring  {Weak-type inequality: dominance by the operator $H^{p,r}$}{}}

The following result is modeled on  Theorem 4.11 in \cite[page 223]{BS88} or Theorem 8 in \cite{Ca66}, and the proof  carries over almost verbatim to this slightly more general case.

\begin{theorem}\label{Hprdominance}
Let $(X,\mu)$ and $(Y,\nu)$ be $\sigma$-finite measure spaces and suppose $1 \leq p,r<\infty$. If $T \in W((p,r),(\infty ,\infty);\mu,\nu) $, then
\begin{equation}\label{eq:Hpr}
(Tf)^*(t)\leq K  H^{p,r}f^*(t),
\end{equation}
where $K>0$ is independent of $f \in (L_{p,r}+L_{\infty})(X,\mu)$ and $t \in \mathbb{R_+}$.

Further, the operator $H^{p,r}$ is in the class $W((p,r),(\infty ,\infty);m,m).$
\end{theorem}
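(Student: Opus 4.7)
The plan is to adapt the classical Calder\'on--Stein--Weiss decomposition argument (as in \cite[Ch.~3, Thm.~4.11]{BS88}) to accommodate $r$-quasilinearity and the refined weak-type $(p,r)$ hypothesis. Fix $t>0$ and apply \Cref{membership of splitted parts} at level $f^{*}(t)$ to decompose $f=f_{0}+f_{1}$ with $f_{1}(x)=\min\{|f(x)|,f^{*}(t)\}\,\mathrm{sgn}\,f(x)$ and $f_{0}=f-f_{1}$. Then $\|f_{1}\|_{L_{\infty}(X,\mu)}=f^{*}(t)$, while $|f_{0}(x)|=(|f(x)|-f^{*}(t))_{+}$ forces $f_{0}^{*_{\mu}}(s)=(f^{*}(s)-f^{*}(t))_{+}$, which vanishes for $s\ge t$ and is dominated by $f^{*}(s)$ on $(0,t)$. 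Consequently
\begin{equation*}
\|f_{0}\|_{L_{p,r}(X,\mu)}^{r}\;\le\;\tfrac{r}{p}\int_{0}^{t}f^{*}(s)^{r}s^{r/p-1}\,ds.
\end{equation*}

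Next I would invoke the $r$-quasilinearity (\ref{nu quasilinear}) of $T$ to write
$(Tf)^{*_{\nu}}(t)\le C\bigl[(Tf_{0})^{*_{\nu}}(ct)+(Tf_{1})^{*_{\nu}}(ct)\bigr]$
for suitable $C>0$ and $0<c<1$. The $L_{\infty}\to L_{\infty}$ boundedness of $T$ yields $(Tf_{1})^{*_{\nu}}(ct)\le\|Tf_{1}\|_{L_{\infty}(Y,\nu)}\le M_{\infty}f^{*}(t)$. The weak-type $(p,r)$ inequality (\ref{type (p,r)}) applied to $f_{0}$, rewritten in the Lorentz-norm form $\|\cdot\|_{L_{p,\infty}}\lesssim C_{p,r}\|\cdot\|_{L_{p,r}}$ noted in the background section, gives
\begin{equation*}
(Tf_{0})^{*_{\nu}}(ct)\;\le\;C_{p,r}(ct)^{-1/p}\|f_{0}\|_{L_{p,r}(X,\mu)}\;\le\;C'\,t^{-1/p}\Bigl(\int_{0}^{t}f^{*}(s)^{r}s^{r/p-1}\,ds\Bigr)^{1/r}=C'H^{p,r}f^{*}(t).
\end{equation*}
Finally, estimating the integrand in $H^{p,r}f^{*}(t)$ from below by $f^{*}(t)^{r}$ on $(0,t)$ gives $f^{*}(t)\le(r/p)^{1/r}H^{p,r}f^{*}(t)$, so combining these two bounds produces (\ref{eq:Hpr}) with $K=C(C'+M_{\infty}(r/p)^{1/r})$.

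For the second assertion, I would verify the three defining properties of $W((p,r),(\infty,\infty);m,m)$ for $H^{p,r}$ directly. The elementary inequality $|a+b|^{r}\le 2^{r-1}(|a|^{r}+|b|^{r})$ shows that $H^{p,r}$ is quasilinear, hence $r$-quasilinear by the remark following (\ref{nu quasilinear}). The endpoint bound $\|H^{p,r}g\|_{L_{\infty}}\le(p/r)^{1/r}\|g\|_{L_{\infty}}$ is immediate from pulling $\|g\|_{\infty}$ outside the integral. For the weak-type $(p,r)$ bound, for any measurable $g\ge 0$,
\begin{equation*}
t^{1/p}(H^{p,r}g)(t)=\Bigl(\int_{0}^{t}g(s)^{r}s^{r/p-1}\,ds\Bigr)^{1/r}\le\Bigl(\int_{0}^{\infty}g(s)^{r}s^{r/p-1}\,ds\Bigr)^{1/r}=(p/r)^{1/r}\|g\|_{L_{p,r}(\mathbb{R_+},m)},
\end{equation*}
and a level-set argument shows a uniform bound on $t^{1/p}(H^{p,r}g)(t)$ passes to the decreasing rearrangement, yielding $\|H^{p,r}g\|_{L_{p,\infty}}\le(p/r)^{1/r}\|g\|_{L_{p,r}}$.

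The only delicate point is bookkeeping the constant $c$ introduced by $r$-quasilinearity: the factor $(ct)^{-1/p}$ produced by the weak-type estimate differs from $t^{-1/p}$ by a harmless $c^{-1/p}$, so no reshaping of the right-hand side is needed. Everything else is either a direct computation or a verbatim transcription of the Calder\'on argument, substituting the $L_{p,r}$ quasi-norm of $f_{0}$ for the $L_{p}$ norm used in the classical case.
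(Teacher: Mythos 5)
Your argument for the main inequality (\ref{eq:Hpr}) is essentially identical to the paper's: the same Calder\'on--Stein--Weiss truncation of $f$ at the level $f^*(t)$, the same appeal to $r$-quasilinearity, the same two endpoint estimates, and the same lower bound $H^{p,r}f^*(t)\ge (p/r)^{1/r}f^*(t)$ used to absorb the $L_\infty$ term. The constant you record, $K=C\bigl(C'+M_\infty(r/p)^{1/r}\bigr)$ with $C'=M_{p,r}c^{-1/p}(r/p)^{1/r}$, is the same as the paper's $K=\bigl(M_{p,r}c^{-1/p}+M_\infty\bigr)(r/p)^{1/r}C$, so there is no substantive difference in the first part.

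For the ``Further'' assertion, which the paper states without proof, your verification of the weak-type $(p,r)$ endpoint contains a gap. You claim, for ``any measurable $g\ge 0$,'' that
$\bigl(\int_0^\infty g(s)^r s^{r/p-1}\,ds\bigr)^{1/r}=(p/r)^{1/r}\|g\|_{L_{p,r}(\mathbb{R_+},m)}$;
this identity holds only when $g=g^*$, since $\|g\|_{L_{p,r}}$ is defined through the rearrangement $g^*$. For $1\le r\le p$ the weight $s^{r/p-1}$ is nonincreasing, so Hardy--Littlewood upgrades the equality to the inequality $\le$ and the argument survives. For $r>p$, however, the weight is increasing, the Hardy--Littlewood inequality goes the other way, and in fact the unrestricted weak-type estimate for $H^{p,r}$ is false: taking $g=\chi_{(a,a+\varepsilon)}$ one computes $\|g\|_{L_{p,r}}=\varepsilon^{1/p}$ while $\|H^{p,r}g\|_{L_{p,\infty}}\gtrsim a^{1/p-1/r}\varepsilon^{1/r}\to\infty$ as $a\to\infty$. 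The correct and usable statement, and the one the paper actually invokes in \Cref{Calderon theorem}, is the weak-type bound for $H^{p,r}$ acting on nonincreasing functions (i.e., functions of the form $f^*$), for which your computation is valid as written. You should therefore restrict the claim to nonincreasing $g$ rather than asserting it ``for any measurable $g$''.
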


 \begin{proof}
 Let $ f\in(L_{p,r}+L_{\infty})(X,\mu)$ and fix $t>0$.
Set
\begin{equation*}
   f_1(x)= \min[|f(x)|, f^*(t)] \cdot \text{sgn}f(x)
\end{equation*}
and 
\begin{equation*}
f_0(x)=f(x)-f_1(x)=[|f(x)|-f^*(t)]^+  \cdot \text{sgn}f(x),
\end{equation*}
for $x \in X$.  Then, $f= f_0 + f_1$ and for all $s>0$
\begin{equation*}
f^*_0(s)= [f^*(s)-f^*(t)]^+,
\end{equation*}
\begin{equation*}
f_1^*(s)= \min (f^*(s),f^*(t)).
\end{equation*}
Clearly $f_1 \in L_{\infty}(X,\mu)$; moreover, $f_0 \in L_{p,r}(X,\mu)$ follows from \Cref{membership of splitted parts}.
%% The justification of the latter goes as follows. Observe that,
%%\begin{equation}\label{eq:lpr}
%%\begin{split}
%%\parallel f_0 \parallel_{L_{p,r}(X,\mu)}^r & = \int_{\mathbb{R_+}} \left( s^{\frac{1}{p}}f_0^*(s) \right)^r \frac{ds}{s} \\
%%& = \int_0^t (f^*(s)-f^*(t))^r s^{\frac{r}{p}-1} ds \\
%%& \leq \int_0^t f^*(s)^r s^{\frac{r}{p}-1} ds.
%%\end{split}
%%\end{equation}
%%Since $ f \in (L_{p,r}+L_{\infty})(X,\mu) $, there exist $g \in L_{p,r}(X,\mu) $ and $ h \in L_{\infty}(X,\mu)$ such that $f=g+h$. From (\ref{eq:subdr}),
%%\begin{equation}\label{eq:inq1}
%%\begin{split}
%%f^*(s)^r =(g+h)^*(s)^r & = [(g+h)^* \left( s/2 + s/2 \right)]^r\\
%%& \leq  2^{r-1}[g^*(s/2)^r + h^*(s/2)^r],
%%\end{split}
%%\end{equation}
%%whence, using (\ref{eq:inq1}), we have that
%%\begin{equation}\label{eq:lpr1}
%%\begin{split}
%%\int_0^t f^*(s)^r s^{\frac{r}{p}-1} & \leq 2^{r-1}\int_0^t  \left[ g^*(s/2)^r + h^*(s/2)^r \right] s^{\frac{r}{p}-1}ds \\
%%& = 2^{\frac{r}{p}+r-1}\int_0^{t/2}  \left[ g^*(s)^r + h^*(s)^r \right] s^{\frac{r}{p}-1}ds \\
%%& = 2^{\frac{r}{p}+r-1} \left( \int_0^{t/2} g^*(s)^r s^{\frac{r}{p}-1}ds + \int_0^{t/2} h^*(s)^r s^{\frac{r}{p}-1}ds \right) \\
%%& \leq 2^{\frac{r}{p}+r-1} \left( \parallel g \parallel_{L_{p,r}(X,\mu)}^r + \left( \frac{p}{r} \right) \parallel h \parallel_{L_{\infty}(X,\mu)}^r  t^{\frac{r}{p}} \right).
%%\end{split}
%%\end{equation}
%%Now from (\ref{eq:lpr}), (\ref{eq:lpr1}) and the fact $g \in L_{p,r}(X,\mu) $ and $ h \in L_{\infty}(X,\mu)$, we get that $f_0 \in L_{p,r}(X,\mu)$.

We next establish inequality (\ref{eq:Hpr}). Since $T$ is a $r$-quasilinear operator with, say, constant of $r$-quasilinearity $C>0$ and $0<c<1$, we have for $t > 0$,
\begin{equation*}
\begin{split}
(Tf)^*(t) & = (T(f_0+f_1))^*(t)\\
& \leq C [(Tf_0)^*(ct)+(Tf_1)^*(ct)].
\end{split}
\end{equation*}
Since
\begin{equation*}
T : L_{p,r}(X,\mu) \rightarrow L_{p,\infty}(Y,\nu),
\end{equation*}
it follows that
\begin{equation*}
\begin{split}
(Tf_0)^*(ct) & \leq M_{p,r}  ( ct )^{-\frac{1}{p}} \left( {\textstyle\frac{r}{p}}  \int_{\mathbb{R_+}}(s^{\frac{1}{p}}f_0^*(s))^r \frac{ds}{s} \right)^{\frac{1}{r}} \\
& \leq M_{p,r} c^{-\frac{1}{p}} \left( {\textstyle\frac{r}{p}} \right)^{\frac{1}{r}} \ t^{-\frac{1}{p}} \left( \int_0^{t}(s^{\frac{1}{p}}f^*(s))^r \frac{ds}{s} \right)^{\frac{1}{r}} \\
& = M_{p,r} c^{-\frac{1}{p}}   \left( {\textstyle\frac{r}{p}} \right)^{\frac{1}{r}}  \  H^{p,r}f^*(t).
\end{split}
\end{equation*}
Again, $f^*$ is a decreasing function, so
\begin{equation}\label{eq:Idd}
\begin{split}
H^{p,r}f^*(t)& =  t^{-\frac{1}{p}}\left( \int_0^{t}(s^{\frac{1}{p}}f^*(s))^r \frac{ds}{s} \right)^{\frac{1}{r}}\\
& \geq f^*(t)t^{-\frac{1}{p}}\left( \int_0^{t}s^{\frac{r}{p}-1} ds \right)^{\frac{1}{r}}\\
&= \left( {\textstyle\frac{p}{r}} \right)^{\frac{1}{r}} \ f^*(t).
\end{split}
\end{equation}
Using the fact that
\begin{equation*}
T : L_{\infty}(X,\mu) \rightarrow L_{\infty}(Y,\nu),
\end{equation*}
and (\ref{eq:Idd}), we get
\begin{equation*}
\begin{split}
(Tf_1)^*(ct)& \leq M_{\infty} \parallel f_1 \parallel_{L_{\infty}(X,\mu)}\\ & = M_{\infty} f^*(t)\\
& \leq M_{\infty}  \left( {\textstyle\frac{p}{r}} \right)^{-\frac{1}{r}}  H^{p,r}f^*(t).
\end{split}
\end{equation*}
Combining these yields
\begin{equation*}
\begin{split}
(Tf)^*(t) & \leq C[(Tf_0)^*(ct )+(Tf_1)^*(ct)]\\
& = K \,  H^{p,r}f^*(t),
\end{split}
\end{equation*}
where $K =   \left( M_{p,r} c^{-\frac{1}{p}} +  M_{\infty} \right)  \left(\frac{r}{p}\right)^{\frac{1}{r}} C $.
\end{proof}

%Thus  condition(s) that imply boundedness of $H^{p,r}$ from $L_{\Phi_2}(\mathbb{R_+} ,m)$ to $L_{\Phi_1}(\mathbb{R_+},m),$ is sufficient for boundedness of any $T \in W((p,r),(\infty ,\infty);\mu,\nu)$ on these spaces. On the other hand, if some condition(s) is necessary for the boundedness of the operator $H^{p,r}$ form $L_{\Phi_2}(\mathbb{R_+} ,m)$ to $L_{\Phi_1}(\mathbb{R_+},m)$, then it will also be necessary for $W((p,r),(\infty$ $ ,\infty); m, m)$ as $H^{p,r} \in  W((p,r),(\infty$ $ ,\infty); m, m)$.
We are now in a position to formulate a Calder\'on-type theorem for operators in $W((p,r),(\infty ,\infty);\mu,\nu)$.
\begin{theorem}\label{Calderon theorem}
Fix $p$ and $r$, $1 \leq p,r < \infty$, and suppose $(X,\mu)$ and $(Y,\nu)$ are $\sigma$-finite  measure spaces with $\mu(X)=\nu(Y)=\infty$, the latter being nonatomic and separable. Then, the  following are equivalent:
\begin{enumerate}
\item[(1)] Every operator $T$ in the class $W((p,r),(\infty ,\infty);\mu,\nu)$ is bounded from $L_{\Phi_2}(X,\mu)$ to $L_{\Phi_1}(Y,\nu)$;
\item[(2)] The operator $H^{p,r}$ is bounded from $L_{\Phi_2}(\mathbb{R_+},m)$ to $L_{\Phi_1}(\mathbb{R_+},m)$.
\end{enumerate}
\end{theorem}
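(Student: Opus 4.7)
The direction $(2)\Rightarrow(1)$ is essentially immediate from \Cref{Hprdominance}. Given $T\in W((p,r),(\infty,\infty);\mu,\nu)$, the pointwise domination $(Tf)^*(t)\le K\,H^{p,r}f^{*_{\mu}}(t)$ together with the facts that the Orlicz gauge norm depends only on the distribution function and is monotone under pointwise inequalities of nonnegative functions yields
\[
\|Tf\|_{L_{\Phi_1}(Y,\nu)}=\|(Tf)^{*_{\nu}}\|_{L_{\Phi_1}(\mathbb{R_+},m)}\le K\,\|H^{p,r}f^{*_{\mu}}\|_{L_{\Phi_1}(\mathbb{R_+},m)}.
\]
Applying (2) to the nonincreasing function $f^{*_{\mu}}$ and then rewriting the right-hand side as $\|f\|_{L_{\Phi_2}(X,\mu)}$ by rearrangement invariance delivers $\|Tf\|_{L_{\Phi_1}(Y,\nu)}\le KC\,\|f\|_{L_{\Phi_2}(X,\mu)}$.

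For the converse $(1)\Rightarrow(2)$, the plan is to manufacture from $H^{p,r}$ a concrete operator in the class $W((p,r),(\infty,\infty);\mu,\nu)$ whose $\nu$-rearrangement reproduces $H^{p,r}f^{*_{\mu}}$. This is precisely what \Cref{construction of Pull back of operator S} is designed for: taking $S=H^{p,r}$ one obtains a pullback $\tilde{H}^{p,r}\colon M_+(X,\mu)\to M_+(Y,\nu)$ satisfying
\[
(\tilde{H}^{p,r}f)^{*_{\nu}}(t)=(H^{p,r}f^{*_{\mu}})^{*_{m}}(t).
\]
I then need to verify $\tilde{H}^{p,r}\in W((p,r),(\infty,\infty);\mu,\nu)$. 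The $r$-quasilinearity comes from \Cref{S-tilda is nu-quasilinear if S is}, since $H^{p,r}$ is monotone, commutes with dilations (a direct change of variables in the defining integral), and is $r$-quasilinear (by Minkowski's inequality in $L^{r}$, and in any case by \Cref{Hprdominance} which places $H^{p,r}$ itself in $W((p,r),(\infty,\infty);m,m)$). The two endpoint estimates transfer immediately through the rearrangement identity: for instance,
\[
\|\tilde{H}^{p,r}f\|_{L_{p,\infty}(Y,\nu)}=\|H^{p,r}f^{*_{\mu}}\|_{L_{p,\infty}(\mathbb{R_+},m)}\le C\,\|f^{*_{\mu}}\|_{L_{p,r}(\mathbb{R_+},m)}=C\,\|f\|_{L_{p,r}(X,\mu)},
\]
and the $L_\infty$ bound is handled analogously.

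With $\tilde{H}^{p,r}$ placed in the class, hypothesis (1) applied to it, combined once more with rearrangement invariance of both Orlicz norms, gives
\[
\|H^{p,r}f^{*_{\mu}}\|_{L_{\Phi_1}(\mathbb{R_+},m)}=\|\tilde{H}^{p,r}f\|_{L_{\Phi_1}(Y,\nu)}\le C'\|f\|_{L_{\Phi_2}(X,\mu)}=C'\|f^{*_{\mu}}\|_{L_{\Phi_2}(\mathbb{R_+},m)}.
\]
Since $\mu(X)=\infty$, as $f$ ranges over $S(X,\mu)$ the rearrangements $f^{*_{\mu}}$ exhaust (a dense subset of) the nonnegative nonincreasing functions on $\mathbb{R_+}$, and this is the form in which (2) is used in the subsequent modular analysis, where $H^{p,r}$ acts only on decreasing inputs. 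The main obstacle I anticipate is administrative rather than conceptual: one must carefully check that the endpoint inequalities and the $r$-quasilinearity of $\tilde{H}^{p,r}$ satisfy the precise form demanded by the definition of $W((p,r),(\infty,\infty);\mu,\nu)$, tracking constants through the measure-preserving transformation $\tau$ supplied by the separability and nonatomicity of $(Y,\nu)$.
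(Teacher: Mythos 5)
Your approach tracks the paper's proof closely, but it leaves two genuine gaps, both of which the paper closes explicitly.

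First, in $(2)\Rightarrow(1)$, the domination $(Tf)^*(t)\le K\,H^{p,r}f^{*_\mu}(t)$ from \Cref{Hprdominance} is valid only for $f\in(L_{p,r}+L_\infty)(X,\mu)$, while (1) asserts boundedness on all of $L_{\Phi_2}(X,\mu)$. You apply the domination without verifying $L_{\Phi_2}(X,\mu)\subseteq(L_{p,r}+L_\infty)(X,\mu)$. The paper establishes this inclusion by a duality argument: from (2) and the Orlicz norm definition one bounds $H^{p,r}f^*(1)$, and this quantity is comparable, by Holmstedt's $K$-functional estimate, to $\|f\|_{(L_{p,r}+L_\infty)(X,\mu)}$. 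Without this step the application of the domination inequality is unjustified for general $f\in L_{\Phi_2}$.

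Second, and more seriously, in $(1)\Rightarrow(2)$ your final chain produces the inequality $\|H^{p,r}g\|_{L_{\Phi_1}}\le C'\|g\|_{L_{\Phi_2}}$ only for $g$ of the form $g=f^{*_\mu}$, i.e.\ for nonnegative nonincreasing $g$. You then argue that this ``is the form in which (2) is used in the subsequent modular analysis,'' but that is a remark about downstream applications, not a proof of statement (2) as written, which asserts boundedness on the whole of $L_{\Phi_2}(\mathbb{R_+},m)$. The paper closes this gap with a \emph{second} invocation of \Cref{construction of Pull back of operator S}, now with $X=\mathbb{R_+}$, $\mu=m$, target $(X,\mu)$ in the role of $(Y,\nu)$, and $S\colon g\mapsto g^{*_m}$, which manufactures $\tilde{f}\in M_+(X,\mu)$ with $\tilde{f}^{*_\mu}=f^{*_m}$ for an arbitrary $f\in M_+(\mathbb{R_+},m)$. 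Combining this with $\|H^{p,r}f\|_{L_{\Phi_1}}\le\|H^{p,r}f^{*_m}\|_{L_{\Phi_1}}$ (which, since $H^{p,r}$ itself belongs to $W((p,r),(\infty,\infty);m,m)$, follows from the distributional estimate $(H^{p,r}f)^*\le K\,H^{p,r}f^*$ of \Cref{Hprdominance}, even though the \emph{pointwise} inequality $H^{p,r}f\le H^{p,r}f^*$ need not hold when $r\ge p$) yields (2) in full. You correctly anticipated that the remaining work was ``administrative rather than conceptual,'' but these administrative steps are precisely what make the claimed equivalence true as stated.
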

\begin{proof}
We first show $(2)$ implies $(1)$. Let $T$ be any operator in the class $W((p,r),(\infty ,\infty);\mu,\nu)$. Then, by \Cref{Hprdominance}, 
\begin{equation}\label{dominace by calderon type operator}
(Tf)^*(t)\leq K  H^{p,r}f^*(t),
\end{equation}
for all $f \in (L_{p,r}+L_{\infty})(X,\mu)$ and for all $t \in \mathbb{R_+}$. Now, from $(2)$, it follows that $L_{\Phi_2}(X,\mu) \subseteq (L_{p,r}+L_{\infty})(X,\mu)$. Indeed, we have,
\begin{align*}
C \| f^* \|_{L_{\Phi_2}(\mathbb{R_+},m)} &\geq \| H^{p,r} f^* \|_{L_{\Phi_1}(\mathbb{R_+},m)} \\
& \geq \frac{1}{2}  \sup \left\lbrace  \int_{\mathbb{R_+}} H^{p,r}f^*(s)g^*(s) ds : \  \|g^*\|_{L_{\Psi_1}(\mathbb{R_+},m)} \leq 1 \right\rbrace .
\end{align*}
Taking $g^*= \chi_{(0,1)} / \| \chi_{(0,1)} \|_{L_{\Psi_1}(\mathbb{R_+},m)} $, we get \[H^{p,r}f^*(1) \leq \int_0^1 H^{p,r}f^*(s)ds \leq D  \| f^* \|_{L_{\Phi_2}(\mathbb{R_+},m)}< \infty, \] with $D = 2C\| \chi_{(0,1)} \|_{L_{\Psi_1}(\mathbb{R_+},m)}$. From the estimate of the K-functional for the pair $\left( L_{p,r}(X,\mu), \right.$ $\left. L_{\infty}(X,\mu) \right)$ (see \cite[Theorem $4.2$]{Ho70}), we have that 
\begin{equation*}
H^{p,r}f^*(1)= \int_0^1 f^*(s)s^{r/p-1}ds \approx \|f \|_{(L_{p,r}+L_{\infty})(X,\mu)} < \infty,
\end{equation*}
for all $f \in (L_{p,r}+L_{\infty})(X,\mu)$.

 Next, $(2)$, together with (\ref{dominace by calderon type operator}), implies that, given $f$ in $L_{\Phi_2}(X,\mu) $ and hence in $(L_{p,r}+L_{\infty})(X,\mu)$, one has
\begin{align*}
\| Tf\|_{L_{\Phi_1}(Y,\nu)}= \| (Tf)^*\|_{L_{\Phi_1}(\mathbb{R_+},m)} & \leq K \| H^{p,r}f^* \|_{L_{\Phi_1}(\mathbb{R_+},m)} \\
& \leq KC \| f^* \|_{L_{\Phi_2}(\mathbb{R_+},m)} = KC \| f\|_{L_{\Phi_2}(X,\mu)},
\end{align*}
so that the operator $T$ is bounded from $L_{\Phi_2}(X,\mu)$ to $L_{\Phi_1}(Y,\nu)$.

Conversely, assume that $(1)$ holds. In \Cref{construction of Pull back of operator S} take $S=H^{p,r}$ and denote by $\tilde{H}^{p,r}$ the operator $\tilde{S}$ guaranteed to exist by that theorem. In particular, then,
\begin{equation*}
(\tilde{H}^{p,r}f)^{*_{\nu}}=  H^{p,r}f^{*_{\mu}}, \ \ \ m \text{-a.e.},
\end{equation*}
for all $f \in M(X,\mu)$, since $\left( H^{p,r}f^{*_{\mu}} \right)(t)= \left( \int_0^{1} f^*(ts)s^{{\textstyle\frac{r}{p}-1}}ds \right)^{\frac{1}{r}}$ is nonincreasing, so $\left( H^{p,r}f^{*_{\mu}} \right)^{*}= H^{p,r}f^{*_{\mu}}$. Moreover, since $H^{p,r}$ is $r$-quasilinear, $\tilde{H}^{p,r}$ will, according to \Cref{S-tilda is nu-quasilinear if S is}, be $r$-quasilinear.

Next,
\begin{equation*}
\tilde{H}^{p,r} : L_{p,r}(X,\mu) \rightarrow L_{p,\infty}(Y,\nu) \ \ \  \text{and} \ \ \  \tilde{H}^{p,r} : L_{\infty}(X,\mu) \rightarrow L_{\infty}(Y,\nu)
\end{equation*}
boundedly.

Indeed, from \Cref{Hprdominance}, $H^{p,r} : L_{p,r}(\mathbb{R_+},m) \rightarrow L_{p,\infty}(\mathbb{R_+},m)$, so given $f \in L_{p,r}(X,\mu)$, one has
\begin{equation*}
\begin{split}
\| \tilde{H}^{p,r} f \|_{L_{p,\infty}(Y,\nu)}
& = \| ( \tilde{H}^{p,r} f )^{*_{\nu}} \|_{L_{p,\infty}(\mathbb{R_+},m)}\\
& = \|  H^{p,r} f^{*_{\mu}}  \|_{L_{p,\infty}(\mathbb{R_+},m)} \\
& \leq C \|   f^{*_{\mu}}  \|_{L_{p,r}(\mathbb{R_+},m)} \\
& = C \|   f  \|_{L_{p,r}(X,\mu)},
\end{split}
\end{equation*}
that is, $\tilde{H}^{p,r} : L_{p,r}(X,\mu) \rightarrow L_{p,\infty}(Y,\nu)$ boundedly. Similarly, $\tilde{H}^{p,r} : L_{\infty}(X,\mu) \rightarrow L_{\infty}(Y,\nu)$ boundedly. We have now shown $\tilde{H}^{p,r} \in   W((p,r),(\infty ,\infty);\mu,\nu)$, whence, by $(1)$, $\tilde{H}^{p,r} : L_{\Phi_2}(X,\mu) \rightarrow L_{\Phi_1}(Y,\nu)$  boundedly.

In \Cref{construction of Pull back of operator S}, take $X$ to be $\mathbb{R_+}$, $\mu$ to be $m$, $Y$ to be $X$, $\nu$ to be $\mu$ and $S$ to be the operator $g \rightarrow g^{*_{m}}$. Given $f \in M_+(\mathbb{R_+},m)$, set $\tilde{f}= \tilde{S}f  \in M_+(X,\mu)$, so that $\tilde{f}^{*_{\mu}}= f^{*_{m}}$.

Thus,
\begin{align*}
\| H^{p,r} f  \|_{L_{\Phi_1}(\mathbb{R_+},m)}
& \leq \| H^{p,r} f^{*_{m}}  \|_{L_{\Phi_1}(\mathbb{R_+},m)}\\
& = \| H^{p,r} \tilde{f}^{*_{\mu}}  \|_{L_{\Phi_1}(\mathbb{R_+},m)} \\
& = \| ( H^{p,r} \tilde{f}^{*_{\mu}})^{*_{m}}  \|_{L_{\Phi_1}(\mathbb{R_+},m)} \\
& = \|( \tilde{H}^{p,r} {\tilde{f}} )^{*_{\nu}}  \|_{L_{\Phi_1}(\mathbb{R_+},m)} \\
& = \| \tilde{H}^{p,r} {\tilde{f}}   \|_{L_{\Phi_1}(Y,\nu)}\\
& \leq C  \|  {\tilde{f}}   \|_{L_{\Phi_2}(X,\mu)}\\
& = C  \|  \tilde{f}^{*_{\mu}}   \|_{L_{\Phi_2}(\mathbb{R_+},m)}\\
& = C  \|  f^{*_{m}}   \|_{L_{\Phi_2}(\mathbb{R_+},m)},
\end{align*}
whence $H^{p,r} : L_{\Phi_2}(X,\mu) \rightarrow  L_{\Phi_1}(Y,\nu)$ boundedly.

This completes the proof.
\end{proof}

\subsection{\texorpdfstring{The Calder\'on  operator $H^{p,r}$ and an associated Hardy inequality}{}}\label{Second section}
%In the above sense, the operator $H^{p,r}$ represents the whole class $W((p,r),(\infty$ $ ,\infty); \mu,\nu)$.
% Hence the Calderon operator $S_{p,r}$ represents the class $W((1,1),(q,r);\mu,\nu)$, in the sense that the necessary and sufficient conditions on Young functions $\Phi_1, \Phi_2$ such that every operator $T \in W((1,1),(q,r);\mu,\nu)$ maps $L_{\Phi_2}(X,\mu)$ to $L_{\Phi_1}(Y,\nu)$ boundedly are the same as for the operator $S_{p,r}$.
 Our next result shows that it is enough to work with the modular inequality for $H^{p,r}$.

\begin{lemma}\label{Hpr dilation invariant}
Let $1 \leq p< \infty$ and $1 \leq r < \infty$. Then,  $H^{p,r}$ is a dilation-commuting operator.
\end{lemma}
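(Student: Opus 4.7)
The plan is to unwind the definition of $H^{p,r}$ and verify the dilation-commuting property by a direct change of variable, since nothing beyond the explicit formula
$$H^{p,r}g(t) = \left( t^{-r/p} \int_0^t g(s)^r s^{r/p - 1} \, ds \right)^{1/r}$$
is needed. Recall that an operator $S$ on $M_+(\mathbb{R_+},m)$ is dilation-commuting if, writing $D_\lambda g(s) := g(\lambda s)$ for $\lambda > 0$, one has $S(D_\lambda g) = D_\lambda(Sg)$, equivalently $S(D_\lambda g)(t) = (Sg)(\lambda t)$ for all $t, \lambda > 0$.

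First I would fix $\lambda > 0$ and $g \in M_+(\mathbb{R_+},m)$, write out
$$H^{p,r}(D_\lambda g)(t) = \left( t^{-r/p} \int_0^t g(\lambda s)^r s^{r/p-1} \, ds \right)^{1/r},$$
and substitute $u = \lambda s$ in the inner integral. The factor $s^{r/p-1}\,ds$ becomes $\lambda^{-r/p} u^{r/p-1}\,du$ (the exponents combine as $-(r/p - 1) - 1 = -r/p$), and the limits of integration become $0$ and $\lambda t$. Pulling the $\lambda^{-r/p}$ out and absorbing it into $t^{-r/p}$ gives $(\lambda t)^{-r/p}$, so
$$H^{p,r}(D_\lambda g)(t) = \left( (\lambda t)^{-r/p} \int_0^{\lambda t} g(u)^r u^{r/p-1} \, du \right)^{1/r} = H^{p,r}g(\lambda t) = D_\lambda(H^{p,r}g)(t),$$
which is exactly what is to be shown.

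Since the argument is a one-line change of variable, there is no genuine obstacle here; the only thing to be careful about is keeping track of the exponent arithmetic, specifically verifying that the power of $\lambda$ produced by the substitution combines with $t^{-r/p}$ to give $(\lambda t)^{-r/p}$ cleanly (this is automatic because the weight $s^{r/p-1}\,ds$ is homogeneous of degree $r/p$). Thus the proof is essentially a display equation, and no auxiliary lemmas from the preceding sections are needed.
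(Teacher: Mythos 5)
Your proof is correct and is exactly the change-of-variable computation that the paper alludes to (the paper simply states that the proof is an easy exercise in change of variable and omits it). The exponent bookkeeping checks out: $s^{r/p-1}\,ds \mapsto \lambda^{-r/p}u^{r/p-1}\,du$ under $u=\lambda s$, so the $\lambda^{-r/p}$ combines with $t^{-r/p}$ to give $(\lambda t)^{-r/p}$ as you say.
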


 \begin{proof} The proof is an easy exercise in change of variable, hence we omit it.
 \end{proof}

\begin{theorem}\label{EquvlcHprnormandmod}
Let $\Phi_1$ and $\Phi_2$ be Young functions. For $1 \leq p,r < \infty$,  we have that the norm inequality 
\begin{equation}\label{eq:normm}
\parallel H^{p,r}f \parallel _{L_{\Phi_1}(\mathbb{R_+},m)} \leq C \parallel f \parallel_{L_{\Phi_2}(\mathbb{R_+},m)},
\end{equation}
holds for all $f$ in $M_+(\mathbb{R_+},m)$ if and only if the modular inequality
\begin{equation}\label{eq:modd}
  \int_{\mathbb{R_+}}\Phi_1(\left( H^{p,r}f^* \right)(t))dt \leq \int_{\mathbb{R_+}}\Phi_2(Kf^*(s))ds ,
\end{equation}
holds for all $f$ in $M_+(\mathbb{R_+},m)$.
\end{theorem}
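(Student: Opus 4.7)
The plan is to exploit two structural properties of $H^{p,r}$: positive $1$-homogeneity (so $H^{p,r}(\lambda f) = \lambda H^{p,r} f$ for $\lambda > 0$) and, by \Cref{Hpr dilation invariant}, commutation with the dilations $E_\sigma f(t) := f(\sigma t)$, i.e.\ $H^{p,r}(E_\sigma f) = E_\sigma(H^{p,r} f)$. Since both the modular inequality and the Luxemburg functional $\|f\|_{L_{\Phi_2}}$ depend only on $f^*$, and since $H^{p,r}$ sends nonincreasing functions to nonincreasing functions (so $(H^{p,r} f^*)^* = H^{p,r} f^*$), I will carry out the argument under the assumption that $f = f^*$ is nonincreasing; the extension to arbitrary $f \in M_+(\mathbb{R_+},m)$ is handled implicitly by the rearrangement machinery used elsewhere in the paper.

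For modular $\Rightarrow$ norm: given a nonincreasing $f$ with finite $\mu := \|f\|_{L_{\Phi_2}}$, one has $\int \Phi_2(f/\mu) \le 1$ by the Luxemburg definition of the norm. Applying the modular inequality to $f/(K\mu)$ in place of $f$, and using the homogeneity of $H^{p,r}$ together with $(f/(K\mu))^* = f/(K\mu)$, yields
\[
\int_{\mathbb{R_+}} \Phi_1\!\left( \frac{H^{p,r}f(t)}{K\mu} \right) dt \;\le\; \int_{\mathbb{R_+}} \Phi_2\!\left( \frac{f(s)}{\mu} \right) ds \;\le\; 1,
\]
so that $\|H^{p,r} f\|_{L_{\Phi_1}} \le K\mu = K \|f\|_{L_{\Phi_2}}$, giving the norm inequality with constant $C = K$.

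For norm $\Rightarrow$ modular: I would use dilation commutativity to renormalize the right-hand side. Given nonincreasing $f$ with $0 < M := \int \Phi_2(f) < \infty$ (the extreme cases being trivial), set $g := E_M f$, which is still nonincreasing. A change of variables shows $\int \Phi_2(g) = M^{-1} \int \Phi_2(f) = 1$, so $\|g\|_{L_{\Phi_2}} \le 1$. The norm inequality then yields $\int \Phi_1(H^{p,r} g / C) \le 1$, and by dilation commutativity $H^{p,r} g = E_M(H^{p,r} f)$. Reversing the change of variables produces
\[
\int_{\mathbb{R_+}} \Phi_1\!\left( \frac{H^{p,r} f(u)}{C} \right) du \;\le\; M \;=\; \int_{\mathbb{R_+}} \Phi_2(f(s))\, ds.
\]
Finally, replacing $f$ by $Cf$ and using the homogeneity $H^{p,r}(Cf) = C\, H^{p,r} f$ converts this into the target modular inequality with $K = C$.

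The main hurdle is really just bookkeeping: one must keep straight two independent scalings (amplitude via homogeneity, spatial via dilation) and track how each acts on $\int \Phi_i(\cdot)$ versus $\|\cdot\|_{L_{\Phi_i}}$. No deeper analytic input is required once the dilation lemma is in hand, and the bulk of the work is careful manipulation of constants so that the two inequalities land in the precise form demanded by the statement.
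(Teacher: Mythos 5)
The paper's ``proof'' of this theorem is a one-line citation of Theorem~A in \cite{KRS17}, a general statement that norm and modular inequalities are equivalent for dilation-commuting operators. You have instead supplied a direct, self-contained argument, and it is in substance the standard proof that such a lemma would carry: amplitude scaling (positive $1$-homogeneity of $H^{p,r}$) gives modular~$\Rightarrow$~norm, and spatial scaling (dilation commutativity from \Cref{Hpr dilation invariant}) together with the change-of-variables identity $\int\Phi(E_\sigma g) = \sigma^{-1}\int\Phi(g)$ gives norm~$\Rightarrow$~modular. Both computations check out: the modular~$\Rightarrow$~norm step correctly exploits $\|h\|_{L_\Phi}\le C \Leftrightarrow \int\Phi(h/C)\le 1$ applied to $h = H^{p,r}(f/(K\mu))$, and the norm~$\Rightarrow$~modular step correctly normalizes the right-hand modular to $1$ via $g = E_M f$ and then unwinds. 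So your argument is a valid, and arguably more informative, substitute for the paper's citation.

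One caveat, which you flag but brush past: you prove both directions only for nonincreasing $f$ and invoke ``the rearrangement machinery used elsewhere in the paper'' for the general case. The modular inequality genuinely is a statement about nonincreasing functions (both sides involve $f^*$ only), so the norm~$\Rightarrow$~modular direction needs nothing further. But the norm inequality as stated is for \emph{all} $f\in M_+(\mathbb{R_+},m)$, and to deduce it from the nonincreasing case one needs $\|H^{p,r}f\|_{L_{\Phi_1}}\le\|H^{p,r}f^*\|_{L_{\Phi_1}}$. By the Hardy--Littlewood rearrangement inequality, the pointwise bound $H^{p,r}f\le H^{p,r}f^*$ holds precisely when the weight $s^{r/p-1}$ in the defining integral is nonincreasing, i.e.\ when $r\le p$; for $r>p$ it can fail (take $p=1$, $r=2$, $f=\chi_{(1,2)}$, evaluate at $t=2$). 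The paper itself asserts this pointwise inequality without restriction in the proof of \Cref{LDEHpr}, so this is a shared imprecision rather than a defect peculiar to your write-up; still, a careful version of your proof should either restrict the norm inequality to nonincreasing $f$ or note that the extension to all of $M_+$ is available only for $r\le p$.
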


 \begin{proof} The proof follows from \cite[Theorem A]{KRS17}, as the norm and the modular inequalities are equivalent for a dilation-commuting operator.
  \end{proof}

We now seek an expression equivalent to the distribution function of $H^{p,r}g$, when $g$ is nonnegative and nonincreasing on $\mathbb{R_+}$.
%We will now work to get an expression for the distribution function of the operator $H^{p,r}.$

\begin{lemma}\label{UDEHpr class}
Let $1 \leq p< \infty,1 \leq r < \infty$ and suppose $T \in W((p,r),(\infty,\infty);\mu,\nu)$. Then, for every $f$ in the domain of $T$,
\begin{equation}\label{eq:DE}
 \nu_{Tf}(t) 
\leq {\textstyle\frac{1}{c}} \left( 2^{1+1/r}r^{1/r} C M_{p,r} \right)^p  \frac{1}{t^p}\left( \int_{t/4CM_{\infty}}^{\infty} \mu_{f}(s)^{r/p}s^{r-1}ds \right)^{p/r},
\end{equation}
where $C$ and $c$ are the constant of $r$-quasilinearity of $T$ and $M_{p,r}, M_{\infty}$ are the operator norms in $T: L_{p,r}(X,\mu) \rightarrow L_{p,\infty}(Y,\nu)$ and $T: L_{\infty}(X,\mu) \rightarrow L_{\infty}(Y,\nu)$, respectively.
\end{lemma}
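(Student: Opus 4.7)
My plan is to mimic the Marcinkiewicz splitting that already appears in the proof of Theorem~3.2, but now extract a quantitative distribution function estimate rather than a pointwise rearrangement bound.

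First I would pick the threshold $\sigma = t/(4CM_{\infty})$ and decompose any $f$ in the domain of $T$ as $f = f_0 + f_1$, where
\[
f_1(x) = \operatorname{sgn}f(x)\cdot\min(|f(x)|,\sigma), \qquad f_0(x) = f(x) - f_1(x),
\]
so that $|f_0| = (|f|-\sigma)^+$ and $\|f_1\|_{\infty}\le \sigma$. By Lemma~\ref{membership of splitted parts}, $f_0\in L_{p,r}$ and $f_1\in L_{\infty}$, so both $Tf_0$ and $Tf_1$ are well defined.

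Next, from $r$-quasilinearity one has $(Tf)^*(s)\le C\bigl[(Tf_0)^*(cs)+(Tf_1)^*(cs)\bigr]$ for every $s>0$, which gives the distributional inclusion
\[
\{s: (Tf)^*(s)>t\} \subseteq \{s: (Tf_0)^*(cs)>t/(2C)\}\cup\{s: (Tf_1)^*(cs)>t/(2C)\}.
\]
Taking Lebesgue measure (and using that $Tf$ and $(Tf)^*$ are equimeasurable) yields
\[
\nu_{Tf}(t) \le \tfrac{1}{c}\bigl[\nu_{Tf_0}(t/(2C)) + \nu_{Tf_1}(t/(2C))\bigr].
\]
The $Tf_1$ term vanishes because $\|Tf_1\|_{L_{\infty}(Y,\nu)}\le M_{\infty}\|f_1\|_{\infty}\le M_{\infty}\sigma = t/(4C)$, which is strictly less than $t/(2C)$, so $\nu_{Tf_1}(t/(2C))=0$. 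This is precisely the reason for choosing $\sigma=t/(4CM_{\infty})$.

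For the remaining term I would invoke the weak-type $(p,r)$ bound $\|Tf_0\|_{L_{p,\infty}(Y,\nu)}\le M_{p,r}\|f_0\|_{L_{p,r}(X,\mu)}$, which gives
\[
\nu_{Tf_0}(t/(2C)) \le \Bigl(\tfrac{2CM_{p,r}}{t}\Bigr)^p \|f_0\|_{L_{p,r}(X,\mu)}^{p}.
\]
Finally I would compute the Lorentz norm of $f_0$ via its distribution function $\mu_{f_0}(s)=\mu_{f}(s+\sigma)$: after the substitution $u=s+\sigma$ and the estimate $(u-\sigma)^{r-1}\le u^{r-1}$,
\[
\|f_0\|_{L_{p,r}(X,\mu)}^{r} = r\int_{0}^{\infty}\mu_{f}(s+\sigma)^{r/p} s^{r-1}\,ds
\le r\int_{\sigma}^{\infty}\mu_{f}(u)^{r/p}u^{r-1}\,du.
\]
Raising to the $p/r$ power and combining with the previous displays yields an inequality of precisely the form claimed, with the lower integration limit $\sigma=t/(4CM_{\infty})$ and an explicit constant depending on $c,C,r,M_{p,r}$.

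No deep obstacle is foreseen; the only delicate points are bookkeeping: (i) correctly converting $(Tf)^*\le C[(Tf_0)^*(c\cdot)+(Tf_1)^*(c\cdot)]$ into a distribution inequality (requiring the union bound above and the change of variable that produces the $1/c$ factor), and (ii) tracking the multiplicative constants so that they match the stated $(2^{1+1/r}r^{1/r}CM_{p,r})^p$; any discrepancy can be absorbed by slightly enlarging $\sigma$ inside the bound $(u-\sigma)^{r-1}\le(2u)^{r-1}$ when $u\ge 2\sigma$, which accounts for the extra $2^{1/r}$.
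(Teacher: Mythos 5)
Your proof is correct and takes a genuinely different route from the paper's. The paper decomposes $f$ by \emph{hard truncation}: $f^t = f\chi_{\{|f|>\tau\}}$ and $f_t = f\chi_{\{|f|\le\tau\}}$ with $\tau=t/(2Ck)$, so that $\mu_{f^t}$ is the original distribution function flattened to the constant $\mu_f(\tau)$ on $(0,\tau)$; estimating $\|f^t\|_{L_{p,r}}^r$ then requires an extra inequality (that $\int_{x/2}^\infty \mu_f^{r/p}s^{r-1}\,ds$ dominates $\tfrac12\bigl[\mu_f(x)^{r/p}x^r/r + \int_x^\infty \mu_f^{r/p}s^{r-1}\,ds\bigr]$), which accounts for the extra factor $2^{p/r}$ in the stated constant and the halving of the lower limit from $\tau$ to $\tau/2=t/(4CM_\infty)$. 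You instead use the \emph{vertical} cut $f_1 = \operatorname{sgn}f\cdot\min(|f|,\sigma)$, $f_0=f-f_1$ (the same splitting the paper itself uses in Theorem~\ref{Hprdominance}); this gives the cleaner identity $\mu_{f_0}(s)=\mu_f(s+\sigma)$, so the Lorentz norm estimate reduces to a change of variable and the elementary bound $(u-\sigma)^{r-1}\le u^{r-1}$ for $r\ge1$. The quasilinearity-to-distribution step, the vanishing of the $Tf_1$ term with $\sigma=t/(4CM_\infty)$, and the weak-type $(p,r)$ bound are handled identically. Your route in fact yields the constant $\bigl(2r^{1/r}CM_{p,r}\bigr)^p/c$, which is smaller than the stated $\bigl(2^{1+1/r}r^{1/r}CM_{p,r}\bigr)^p/c$, so the concluding remark (ii) about needing to enlarge $\sigma$ or use $(u-\sigma)^{r-1}\le(2u)^{r-1}$ to match the constant is unnecessary: your bound is already stronger than \eqref{eq:DE}, not weaker.
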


\begin{proof}
Fix $t>0$ and $f \in L_{p,r}+L_{\infty}(X,\mu)$. Let  $k$ be any positive number. Write $f=f_t + f^t$, with
\begin{equation*}
    f^t(x)= \begin{cases}
                 f(x), & |f(x)|>{ \textstyle\frac{t}{2Ck} },\\
                 0,  & |f(x)| \leq { \textstyle\frac{t}{2Ck} },
                \end{cases}
\end{equation*}
and $f_t(x)= f(x)- f^t(x)$. 
Observe that the distribution functions of $f_t$ and $f^t$ are as follows:
\begin{equation*}
    \mu_{f_t}(s)= \begin{cases}
                 \mu_{f}(s) - \mu_{f}({ \textstyle\frac{t}{2Ck} }), & s<{ \textstyle\frac{t}{2Ck} },\\
                 0,  & s \geq { \textstyle\frac{t}{2Ck} }
                \end{cases}
\end{equation*}
and
\begin{equation*}
    \mu_{f^t}(s)= \begin{cases}
                 \mu_{f}({ \textstyle\frac{t}{2Ck} }), & s < { \textstyle\frac{t}{2Ck} },\\
                 \mu_{f}(s),  & s \geq { \textstyle\frac{t}{2Ck} }.
                \end{cases}
\end{equation*}
Then, from the $r$-quasilinearity of $T$
\begin{equation*}
\nu_{Tf}(t) \leq {\textstyle\frac{1}{c}} \left[ \nu_{Tf^t}\left({ \textstyle\frac{t}{2C} } \right)+\nu_{Tf_t}\left( { \textstyle\frac{t}{2C} } \right) \right].
\end{equation*}
For any $t>0$ and $x$ such that $|Tf_t(x)|> { \textstyle\frac{t}{2C} }$, we have  ${ \textstyle\frac{t}{2C} } < |Tf_t(x)| \leq M_{\infty}\|f_t\|_{L_{\infty}(X,\mu)} \leq  M_{\infty} \frac{t}{2Ck}$. Therefore, $\nu_{Tf_t}({ \textstyle\frac{t}{2C} })=0$ when $k \geq M_{\infty}$. So, for such a $k$,
\begin{equation*}
\nu_{Tf}(t) \leq {\textstyle\frac{1}{c}} \nu_{Tf^t}\left({ \textstyle\frac{t}{2C} } \right).
\end{equation*}
Since $T : L_{p,r}(X,\mu) \rightarrow L_{p,\infty}(Y,\nu)$, with operator norm, say, $M_{p,r}$, we have, for any $y>0$,
\begin{equation*}
\begin{split}
y \ \nu_{Tf^t}(y)^{\frac{1}{p}} & \leq M_{p,r} \|f^t\|_{L_{p,r}}\\
& = r^{1/r} M_{p,r}   \left( \int_0^{\infty} \mu_{f^t}(s)^{r/p}s^{r-1}ds \right)^{1/r}\\
& =r^{1/r} M_{p,r}\left( \mu_{f}\left( { \textstyle\frac{t}{2Ck} } \right)^{r/p} {\textstyle\frac{ \left( \textstyle\frac{t}{2Ck} \right)^r}{r} }+ \int_{\textstyle\frac{t}{2Ck}}^{\infty} \mu_{f}(s)^{r/p}s^{r-1}ds \right)^{1/r}\\
& \leq r^{1/r} M_{p,r} \left( 2 \int_{\textstyle\frac{t}{4Ck}}^{\infty} \mu_{f}(s)^{r/p}s^{r-1}ds \right)^{1/r}.
\end{split}
\end{equation*}
Indeed, for any $x>0$
%where the last step follows from the following : for any $x>0$,
\begin{equation*}
\begin{split}
 \int_{x/2}^{\infty} \mu_{f}(s)^{r/p}s^{r-1}ds & =  \int_{x/2}^{x} \mu_{f}(s)^{r/p}s^{r-1}ds+  \int_{x}^{\infty} \mu_{f}(s)^{r/p}s^{r-1}ds\\
 & \geq \mu_{f}(x)^{r/p}{\textstyle\frac{x^r}{r}} \left( 1- { \textstyle\frac{1}{2^r}} \right) + \int_{x}^{\infty} \mu_{f}(s)^{r/p}s^{r-1}ds\\
 & \geq {\textstyle\frac{1}{2}}  \left(    \mu_{f}(x)^{r/p}  { \textstyle\frac{x^r}{r}}  + \int_{x}^{\infty} \mu_{f}(s)^{r/p}s^{r-1}ds    \right),
\end{split}
\end{equation*}
which yields the assertion on taking $x={ \textstyle\frac{t}{2Ck} }$. Again, with $y={ \textstyle\frac{t}{2C} }$, we get
\begin{equation*}
{ \textstyle\frac{t}{2C} } \ \nu_{Tf^t}\left( { \textstyle\frac{t}{2C} } \right)^{\frac{1}{p}} 
\leq r^{1/r} M_{p,r} \left( 2 \int_{\frac{t}{4Ck}}^{\infty} \mu_{f}(s)^{r/p}s^{r-1}ds \right)^{1/r},
\end{equation*}
which implies
\begin{equation*}
 \nu_{Tf}(t) 
\leq {\textstyle\frac{1}{c}} \left( \textstyle\frac{ M_{p,r}(2r)^{1/r}}{{ \textstyle\frac{t}{2C} }} \right)^p \left( \int_{\frac{t}{4Ck}}^{\infty} \mu_{f}(s)^{r/p}s^{r-1}ds \right)^{p/r}.
\end{equation*}
\end{proof}
 
%\begin{theorem}\label{LDEHpr}
%Fix $p$ and $r$, with $1<p<\infty$ and $1 \leq r< \infty$. Then, for any nonnegative, nonincreasing $g \in (L_{p,r}+L_{\infty})(\mathbb{R_+},m)$ and $t \in \mathbb{R_+}$, one has
%\begin{align}\label{equivalent expression for distribution function of Hprf}
% 2^{p \left(-2+ 1/r \right)} p^{p/r}     \left(  \int_{t}^{\infty}  m_{g}(s)^{r/p} s^{r-1}ds \right)^{p/r}      &    \leq      t^p \,  m_{H^{p,r}g}(t)  \nonumber\\
%&   \leq     2^{2p+1} p^{p/r}   \left(  \int_{t/2^{\frac{1}{p}+1- \frac{1}{r}} \left( \frac{p}{r} \right)^{\frac{1}{r}}}^{\infty}  m_{g}(s)^{r/p} s^{r-1}ds \right)^{p/r} 
%\end{align}
%\end{theorem}

 \begin{theorem}\label{LDEHpr}
Fix $p$ and $r$, with $1<p<\infty$ and $1 \leq r< \infty$. Then, for any nonnegative, nonincreasing $g \in (L_{p,r}+L_{\infty})(\mathbb{R_+},m)$ and $t \in \mathbb{R_+}$, one has
\begin{align}\label{equivalent expression for distribution function of Hprf}
 p^{p/r}     \left(  \int_{t}^{\infty}  m_{g^*}(s)^{r/p} s^{r-1}ds \right)^{p/r}      &    \leq      t^p \,  m_{H^{p,r}g^*}(t)  \nonumber\\
&   \leq     2^{ 2p+ 1 } p^{p/r}   \left(  \int_{  \frac{t}{       2^{3-\frac{1}{r}}  \left( {\frac{p}{r}} \right)^{\frac{1}{r}}            }    }^{\infty}  m_{g^*}(s)^{r/p} s^{r-1}ds \right)^{p/r} 
\end{align}
\end{theorem}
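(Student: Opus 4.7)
The plan is to prove the two inequalities separately, both by working directly with $H^{p,r}$. As preparation I would record three facts: Minkowski's inequality in $L^{r}(\mathbb{R_+}, s^{r/p-1}\,ds)$ (applicable since $r \geq 1$) makes $H^{p,r}$ sublinear; $\|H^{p,r} f\|_{L_{\infty}} \leq (p/r)^{1/r}\|f\|_{L_{\infty}}$ by direct calculation; and $\sup_{t>0} t^{1/p} H^{p,r} f^{*}(t) = \bigl(\int_{0}^{\infty} f^{*}(s)^{r} s^{r/p-1}\,ds\bigr)^{1/r} = (p/r)^{1/r}\|f\|_{p,r}$. Hence $H^{p,r} \in W((p,r),(\infty,\infty);m,m)$ with operator norms $M_{p,r} = M_{\infty} = (p/r)^{1/r}$, and its sublinearity makes it $r$-quasilinear with $C = 1$, $c = 1/2$.

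For the upper bound, I would invoke \Cref{UDEHpr class} with $T = H^{p,r}$ and these constants. Direct substitution yields
\[
t^{p}\, m_{H^{p,r} g^{*}}(t) \leq 2^{1+p+p/r}\, p^{p/r} \left(\int_{t/(4(p/r)^{1/r})}^{\infty} m_{g^{*}}(s)^{r/p}\, s^{r-1}\,ds\right)^{p/r}.
\]
Since $r \geq 1$ forces $p/r \leq p$ (so $2^{1+p+p/r} \leq 2^{2p+1}$) and $2^{3-1/r} \geq 4$ (so $t/[2^{3-1/r}(p/r)^{1/r}] \leq t/[4(p/r)^{1/r}]$, only enlarging the domain of integration on the right), the stated upper bound follows.

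The lower bound is the substantive part. Set $B := \int_{t}^{\infty} m_{g^{*}}(v)^{r/p} v^{r-1}\,dv$ and $s_{0} := p^{p/r} B^{p/r}/t^{p}$, so that $t^{r} s_{0}^{r/p} = pB$. The function $H^{p,r} g^{*}$ is nonincreasing on $\mathbb{R_+}$ (the change of variable $u = sw$ in the inner integral gives $H^{p,r} g^{*}(s) = \bigl(\int_{0}^{1} g^{*}(sw)^{r} w^{r/p-1}\,dw\bigr)^{1/r}$, plainly decreasing in $s$), so showing $m_{H^{p,r} g^{*}}(t) \geq s_{0}$ reduces to showing $H^{p,r} g^{*}(s_{0}) \geq t$, equivalently $\int_{0}^{s_{0}} g^{*}(u)^{r} u^{r/p-1}\,du \geq pB$. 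The main ingredient is the identity
\[
\int_{0}^{m_{g^{*}}(t)} g^{*}(u)^{r} u^{r/p-1}\,du = \frac{p t^{r}}{r}\, m_{g^{*}}(t)^{r/p} + pB,
\]
which I would derive by writing $g^{*}(u)^{r} = r\int_{0}^{\infty} v^{r-1}\,\chi_{\{u < m_{g^{*}}(v)\}}\,dv$, applying Fubini, and splitting the $v$-integral at $t$ (since $\min(m_{g^{*}}(t), m_{g^{*}}(v))$ equals $m_{g^{*}}(t)$ for $v \leq t$ and $m_{g^{*}}(v)$ for $v \geq t$). In Case~A ($s_{0} \geq m_{g^{*}}(t)$), the identity plus monotonicity of the integral give $\int_{0}^{s_{0}} \geq \int_{0}^{m_{g^{*}}(t)} \geq pB$; in Case~B ($s_{0} < m_{g^{*}}(t)$), the fact that $g^{*}(u) > t$ for $u \in (0, s_{0})$ yields $\int_{0}^{s_{0}} g^{*}(u)^{r} u^{r/p-1}\,du \geq t^{r}\cdot(p/r)\, s_{0}^{r/p} = p^{2} B/r$.

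The main obstacle lies in Case~B: the estimate $p^{2}B/r \geq pB$ requires $p \geq r$. This is precisely the regime in which \Cref{LDEHpr} is invoked in \Cref{Class W(prInfinity)} to recast the modular inequality for $H^{p,r}$ as a weighted dual Hardy inequality (the conjugate index $\rho' = p/(p-r)$ being finite only for $r < p$); the complementary range $r \geq p$ enters condition~(3) of Theorem~A through a separate Zygmund--Str\"omberg-type condition and does not need this lower estimate. So Case~B with $p \geq r$ is sufficient for the intended applications.
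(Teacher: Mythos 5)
Your upper bound follows the paper's outline---both invoke \Cref{UDEHpr class} with $T=H^{p,r}$---but you take the $r$-quasilinearity constant $C=1$ (via Minkowski, valid since $r\geq 1$) where the paper uses $C=2^{1-1/r}$, obtaining a sharper intermediate estimate that you then correctly relax to the stated form using $r\geq 1$. Your lower bound is a genuinely different route. The paper first observes $H^{p,r}g\geq g$ pointwise, hence $\tau_0:=m_{H^{p,r}g}(t)\geq m_g(t)$, and then bounds $\tau_0=t^{-p}\bigl(\int_0^{\tau_0}g(s)^rs^{r/p-1}\,ds\bigr)^{p/r}$ from below by shrinking the range of integration to $(0,m_g(t))$ and identifying the result with a Lorentz norm; you instead verify $H^{p,r}g(s_0)\geq t$ directly for the target value $s_0$, derive the Fubini identity $\int_0^{m_g(t)}g(u)^ru^{r/p-1}\,du=\frac{pt^r}{r}m_g(t)^{r/p}+pB$, and split on whether $s_0\geq m_g(t)$ or not. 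Both routes pass through the same identity; the paper's is more compact, while yours isolates exactly where the parameter constraint enters.

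The restriction $p\geq r$ that you hit in Case~B is genuine and equally present in the paper's argument: the paper's step $H^{p,r}g\geq g$ already requires $p\geq r$, since $H^{p,r}g(s)\geq(p/r)^{1/r}g(s)$ with equality for constant $g$, and $(p/r)^{1/r}<1$ when $r>p$. In fact the first inequality of the theorem \emph{fails} for $r>p$: take $g\equiv a>0$ and $(p/r)^{1/r}a\leq t<a$; then the left-hand side is infinite while $m_{H^{p,r}g}(t)=0$. The hypothesis should therefore read $1\leq r\leq p$ rather than $1\leq r<\infty$, and the paper does not remark on this. Your closing observation---that the lower estimate is only invoked in the $r<p$ analysis, the $r\geq p$ case being routed through $H^{p,p}$ and the Zygmund--Str\"omberg condition---is correct, and makes explicit something the paper leaves implicit.
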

\begin{proof}
The operator $H^{p,r}$ is in $W((p,r),(\infty,\infty);m,m)$, with $r$-quasilinearity constants $C=2^{1-\frac{1}{r}}$, $c={\textstyle\frac{1}{2}}$ and $M_{p,r},M_{\infty}$ are less than or equal to $\left( \frac{p}{r} \right)^{\frac{1}{r}}$. Also, observe that
\begin{equation*}
H^{p,r}f \leq H^{p,r}f^*, \ f \in M_+(\mathbb{R_+},m).
\end{equation*}
This suffices to establish the first of the inequalities in (\ref{equivalent expression for distribution function of Hprf}), in view of \Cref{UDEHpr class}.

To prove the first inequality we begin by letting $\tau_0$ be the least $\tau$ for which $\left( H^{p,r}g \right)(\tau)=t$. Then,
\begin{equation}
\tau_0 = m_{H^{p,r}g}(t)
\end{equation}
and
\begin{equation}
\left( H^{p,r}g \right)(\tau_0)= t \Leftrightarrow \left( \tau_0^{-r/p} \int_0^{\tau_0} g(s)^{r} s^{r/p-1}ds \right)^{1/r}= t.
\end{equation}
Since $H^{p,r}g(t) \geq g(t)$, implying thereby $\tau_0 = m_{H^{p,r}g}(t) \geq m_{g}(t)$,
\begin{align*}
m_{H^{p,r}g}(t) = \tau_0 & = \frac{1}{t^{p}} \left(  \int_0^{\tau_0} g(s)^{r} s^{r/p-1}ds \right)^{\frac{p}{r}}\\
& \geq \frac{1}{t^{p}} \left(  \int_0^{m_{g}(t)} g(s)^{r} s^{r/p-1}ds \right)^{\frac{p}{r}}\\
& = \frac{1}{t^{p}} \left(  \int_{\mathbb{R_+}} ( \chi_{(0, m_{g}(t))} g)(s)^{r} s^{r/p-1}ds \right)^{\frac{p}{r}}\\
& = {\left( \textstyle\frac{p}{r} \right)}^{\frac{p}{r}}  \frac{1}{t^{p}}    \left(  {\textstyle\frac{r}{p}} \int_{\mathbb{R_+}} ( \chi_{(0, m_{g}(t))} g)^*(s)^{r} s^{r/p-1}ds \right)^{\frac{p}{r}}\\
& =    {\left( \textstyle\frac{p}{r} \right)}^{\frac{p}{r}}  \frac{1}{t^p} \| \chi_{(0, m_{g}(t))} g \|_{L_{p,r}(\mathbb{R_+})}^p \\
& = p^{\frac{p}{r}} \frac{1}{t^{p}} \left(  \int_{\mathbb{R_+}}  m_{[\chi_{(0, m_{g}(t))} g]}(s)^{r/p} s^{r-1}ds \right)^{\frac{p}{r}}\\
& = p^{\frac{p}{r}}  \frac{1}{t^{p}} \left( \int_0^{t} m_{g}(s)^{r/p} s^{r-1}ds +  \int_{t}^{\infty}  m_{g}(s)^{r/p} s^{r-1}ds \right)^{\frac{p}{r}}
\end{align*}
where in the last but one equality integral is realized as a Lorentz space norm, so finally  we get
%\begin{equation*}
%\begin{split}
%m_{H^{p,r}g}(t) 
%& \geq  p^{\frac{p}{r}} \frac{1}{t^{p}} \left(  \int_{t/2}^{t} m_{g}(t)^{r/p} s^{r-1}ds +  \int_{t}^{\infty}  m_{g}(s)^{r/p} s^{r-1}ds \right)^{\frac{p}{r}}\\
%& \geq \frac{1}{t^{p}} \left( \frac{p}{2^r}  m_{g}(t)^{r/p} t^r + \frac{p}{2^r}  \int_{t}^{\infty}  m_{g}(s)^{r/p} s^{r-1}ds \right)^{\frac{p}{r}}\\
%& \geq  \frac{1}{t^{p}} \left(  \frac{2p}{ 2^{2r}  }\int_{t}^{\infty}  m_{g}(s)^{r/p} s^{r-1}ds \right)^{\frac{p}{r}}\\
%& = 2^{p \left(-2+ 1/r \right)} p^{p/r}   \frac{1}{t^{p}} \left(  \int_{t}^{\infty}  m_{g}(s)^{r/p} s^{r-1}ds \right)^{\frac{p}{r}},
%\end{split} 
%\end{equation*} 
%where the last step follows because, for any $x>0$,
%\begin{equation*}
%\begin{split}
%\int_{x}^{\infty}  m_{g}(s)^{r/p} s^{r-1}ds & = \int_{x}^{2x}  m_{g}(s)^{r/p} s^{r-1}ds + \int_{2x}^{\infty}  m_{g}(s)^{r/p} s^{r-1}ds\\
%& \leq 2^{r-1} m_{g}(x)^{r/p}x^r  + \int_{2x}^{\infty}  m_{g}(s)^{r/p} s^{r-1}ds\\
%& \leq 2^{r-1} \left(   m_{g}(x)^{r/p} x^r  + \int_{2x}^{\infty}  m_{g}(s)^{r/p} s^{r-1}ds  \right).
%\end{split}
%\end{equation*}
% So finally we have,
%\begin{equation*}
%m_{H^{p,r}g}(t) \geq    2^{p \left(-2+ 1/r \right)} p^{p/r} \frac{1}{t^{p}} \left(  \int_{t}^{\infty}  m_{g}(s)^{r/p} s^{r-1}ds \right)^{p/r}.
%\end{equation*}
\begin{equation*}
m_{H^{p,r}g}(t) \geq     p^{p/r} \frac{1}{t^{p}} \left(  \int_{t}^{\infty}  m_{g}(s)^{r/p} s^{r-1}ds \right)^{p/r}.
\end{equation*}
\end{proof}

Using \Cref{LDEHpr} we can reduce a modular inequality (and hence the equivalent norm inequality) involving $H^{p,r}$ to a weighted Hardy inequality.
\begin{theorem}\label{Ineq equi Mod Hpr}
Fix $p$ and $r$, where $1 < p<\infty$ and $ 1 \leq r < \infty$. Suppose $\Phi_i(t)=\int_0^t \phi_i(s)ds$, $i=1,2$ are Young functions. Then, the following are equivalent:  
\begin{enumerate}
\item
 There exists a constant $C>0$ such that
\begin{equation}\label{eq:modular inequality for Hpr}
  \int_{\mathbb{R_+}}\Phi_1 \left( (H^{p,r}f^*)(t) \right) dt \leq \int_{\mathbb{R_+}}\Phi_2(Cf^*(s))ds ,
\end{equation}
holds for all $f$ in $M_+(\mathbb{R_+},m)$.;
\item
There exist $C_1,C_2>0$, such that the weighted Hardy inequality
\begin{align}\label{eq:HD}
 \int_{\mathbb{R_+}}    \left( \int_{x}^{\infty} g(s)ds \right)^{p/r} { \textstyle\frac{\phi_1(x^{\frac{1}{r}})}{x^{\frac{p-1}{r}+1}  } } dx 
\leq C_1  \int_{\mathbb{R_+}} g(y)^{p/r}   \phi_2 \left( C_2 y^{\frac{1}{r}} \right) y^{\frac{1}{r}-1}dy,
\end{align}
 holds for all nonnegative, nonicreasing function $g$ on $\mathbb{R_+}$.
\end{enumerate}
Moreover, $C_2= \frac{C}{4KM_{\infty}} $ and  $C_1 = \frac{r}{p}2^{r-1}\frac{M_{\infty}^{r/p'}}{M_{p,r}^r} \left( \frac{C}{4K} \right)^{r/p}$, where $k=1$ and $K$ are the constants of $r$-quasilinearity for the operator $H^{p,r}$ and $M_{p,r}, M_{\infty}$ are operator norms of $H^{p,r}: L_{p,r}(\mathbb{R_+},m) \rightarrow L_{p,\infty}(\mathbb{R_+},m)$ and $H^{p,r}: L_{\infty}(\mathbb{R_+},m) \rightarrow L_{\infty}(\mathbb{R_+},m)$ respectively.
\end{theorem}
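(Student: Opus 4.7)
The approach is to transform both the modular inequality (\ref{eq:modular inequality for Hpr}) and the weighted Hardy inequality (\ref{eq:HD}) into a common form, by combining the layer-cake representation for $\int\Phi_i\circ h\,dm$ with the two-sided distribution-function estimate from \Cref{LDEHpr}, and then applying the power change of variables $t=x^{1/r}$ in the outer integral and $s=y^{1/r}$ in the inner one.

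First I would write
\begin{equation*}
\int_{\mathbb{R_+}}\Phi_1((H^{p,r}f^*)(t))\,dt = \int_0^\infty \phi_1(t)\,m_{H^{p,r}f^*}(t)\,dt
\end{equation*}
and, after the substitution $\lambda\mapsto C\lambda$,
\begin{equation*}
\int_{\mathbb{R_+}}\Phi_2(Cf^*(s))\,ds = C\int_0^\infty\phi_2(C\lambda)\,m_{f^*}(\lambda)\,d\lambda.
\end{equation*}
For the direction (\ref{eq:modular inequality for Hpr})$\Rightarrow$(\ref{eq:HD}), I would invoke the lower bound from \Cref{LDEHpr}. Substituting $t=x^{1/r}$, $s=y^{1/r}$, and setting $g(y):=m_{f^*}(y^{1/r})^{r/p}$, the modular left-hand side is bounded below by a constant multiple of
\begin{equation*}
\int_0^\infty\frac{\phi_1(x^{1/r})}{x^{(p-1)/r+1}}\left(\int_x^\infty g(y)\,dy\right)^{p/r}dx,
\end{equation*}
while the same substitutions transform the right-hand side into a constant multiple of $\int_0^\infty\phi_2(Cy^{1/r})g(y)^{p/r}y^{1/r-1}\,dy$. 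Since, as $f$ ranges over $M_+(\mathbb{R_+},m)$, the function $g$ ranges over all nonnegative nonincreasing functions on $\mathbb{R_+}$ (the distribution function $m_{f^*}$ can be prescribed arbitrarily), this yields (\ref{eq:HD}).

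For the converse direction I would use the upper bound from \Cref{LDEHpr}, whose inner integral starts at $t/K_0$ with $K_0=4KM_\infty$. After the same power substitutions the inner lower limit becomes $x/K_0^r$, so one cannot apply (\ref{eq:HD}) to $g$ directly. The fix is to dilate: setting $\tilde g(y'):=g(y'/K_0^r)/K_0^r$ (still nonincreasing), one has $\int_{x/K_0^r}^\infty g(y)\,dy = \int_x^\infty\tilde g(y')\,dy'$, so (\ref{eq:HD}) applies to $\tilde g$; undoing the dilation on the right-hand side shifts the argument of $\phi_2$ by the factor $K_0$, which is exactly why the statement requires $C_2=C/(4KM_\infty)$ rather than $C_2=C$. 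Combining the Jacobian factors from the substitutions with the prefactor $2^{2p+1}p^{p/r}$ from \Cref{LDEHpr} and the operator-norm constants then produces the stated value of $C_1$.

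The main obstacle is purely bookkeeping: carefully tracking the Jacobians from the substitutions $t=x^{1/r}$, $s=y^{1/r}$, and $y=K_0^r y'$ together with the explicit constants from \Cref{LDEHpr} (including the dependence on $M_{p,r}$, $M_\infty$, and the $r$-quasilinearity constant of $H^{p,r}$), so that they combine to the exact values of $C_1$ and $C_2$ given in the statement. No functional-analytic input beyond the monotonicity and positivity of $\phi_1,\phi_2$ and the surjectivity of $f\mapsto g$ onto nonincreasing functions is needed.
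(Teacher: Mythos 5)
Your proposal is correct and follows essentially the same route as the paper's proof: both sides are converted by the layer-cake formula into integrals against the distribution function, the two-sided estimate of $m_{H^{p,r}f^*}$ from \Cref{LDEHpr} is invoked (upper bound for $(2)\Rightarrow(1)$, lower bound for $(1)\Rightarrow(2)$), and the substitutions $t=x^{1/r}$, $s=y^{1/r}$ produce the weights in \eqref{eq:HD}. Your dilation $\tilde g(y')=g(y'/K_0^r)/K_0^r$ to absorb the shifted lower limit $t/\beta$ is the same maneuver the paper performs by the change of variable $s'=\beta s$ inside the inner integral (which turns $m_{f^*}$ into $m_{\beta f^*}$ and pushes the dilation constant into the argument of $\phi_2$), so the two accounts differ only in notation.
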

\begin{proof}
Let $\alpha =2^{2+\frac{1}{p}} p^{\frac{1}{r}}$ and $\beta = 2^{3-\frac{1}{r}} \left( \frac{p}{r} \right)^{\frac{1}{r}}$. Then, in view of \Cref{LDEHpr},
\begin{equation*} 
\begin{split}
\int_{\mathbb{R_+}}\Phi_1((H^{p,r}f^*)(t))dt
& = \int_{\mathbb{R_+}} \phi_1(t)m_{H^{p,r}f^*}(t)dt \\
& \leq \alpha^p \int_{\mathbb{R_+}}    \frac{\phi_1(t)}{t^p}\left( \int_{t/\beta}^{\infty} m_{f^*}(s)^{r/p}s^{r-1}ds \right)^{p/r}dt \\
& = \left( \frac{\alpha}{\beta} \right)^p \int_{\mathbb{R_+}}    \frac{\phi_1(t)}{t^p}\left( \int_{t}^{\infty} m_{\beta f^*}(s')^{r/p}s'^{r-1}ds' \right)^{p/r}dt \\
& = \left( \frac{\alpha}{\beta} \right)^p  \frac{1}{r^{p/r}} \int_{\mathbb{R_+}}    \frac{\phi_1(t)}{t^p}\left( \int_{t^r}^{\infty} m_{\beta f^*}(y^{\frac{1}{r}})^{r/p} dy \right)^{p/r}dt \\
& = \left( \frac{\alpha}{\beta} \right)^p  \frac{1}{r^{1+p/r}} \int_{\mathbb{R_+}}    \frac{\phi_1(x^{\frac{1}{r}})}{x^{p/r}}\left( \int_{x}^{\infty} m_{\beta f^*}(y^{\frac{1}{r}})^{r/p} dy \right)^{p/r}  x^{\frac{1}{r}-1} dx \\
& = \left( \frac{\alpha}{\beta} \right)^p  \frac{1}{r^{1+p/r}} \int_{\mathbb{R_+}}    \left( \int_{x}^{\infty} m_{\beta f^*}(y^{\frac{1}{r}})^{r/p} dy \right)^{p/r} \frac{\phi_1(x^{\frac{1}{r}})}{x^{\frac{p-1}{r}+1}} dx 
\end{split}
\end{equation*}
Now, given (\ref{eq:HD}), the latter will be
\begin{align*}
& \leq \left( \frac{\alpha}{\beta} \right)^p  \frac{1}{r^{1+p/r}} C_1 \int_{\mathbb{R_+}}      m_{\beta f^*}(y^{\frac{1}{r}}) \, \phi_2 \left( C_2 y^{\frac{1}{r}} \right) y^{\frac{1}{r}-1}dy \\
& \leq \left( \frac{\alpha}{\beta} \right)^p  \frac{1}{r^{p/r}} C_1 \int_{\mathbb{R_+}}      m_{\beta f^*}(s) \, \phi_2 \left( C_2 s \right) ds\\
& \leq \left( \frac{\alpha}{\beta} \right)^p  \frac{1}{r^{p/r}} \frac{C_1}{C_2} \int_{\mathbb{R_+}}     \Phi_2 \left( \beta C_2 f^*(s) \right) ds\\
& \leq  \int_{\mathbb{R_+}}     \Phi_2 \left( C f^*(s) \right) ds,
\end{align*}
$C= \max \left[ \beta C_2,  \left( \frac{\alpha}{\beta} \right)^p  \frac{1}{r^{p/r}} \frac{C_1}{C_2}      \beta C_2 \right] $. Thus, (\ref{eq:HD}) implies (\ref{eq:modular inequality for Hpr}).

Suppose, next, that (\ref{eq:modular inequality for Hpr}) holds. The nonnegative, nonincreasing $g$ in (\ref{eq:HD}) is of the form $m_{\beta f^*}(y^{\frac{1}{r}})^{r/p}$ for some $f^*$, namely, for
\begin{equation*}
f^*(t) =  \frac{   \left[ g(s^r)^{p/r} \right]^{-1}(t)     }{\beta}, \ t \in \mathbb{R_+}.
\end{equation*}
So, (\ref{eq:HD}) is equivalent to the inequality
\begin{align*}
 \int_{\mathbb{R_+}}    \left( \int_{x}^{\infty} m_{\beta f^*}(y^{\frac{1}{r}})^{r/p} dy \right)^{p/r} { \textstyle\frac{\phi_1(x^{\frac{1}{r}})}{x^{\frac{p-1}{r}+1}  } }  dx 
\leq C_1  \int_{\mathbb{R_+}} m_{\beta f^*}(y^{\frac{1}{r}})   \phi_2 \left( C_2 y^{\frac{1}{r}} \right) y^{\frac{1}{r}-1}dy.
\end{align*}
Taking $x=t^r$ in the first integral we get
\begin{align*}
 \int_{\mathbb{R_+}}    \left( \int_{t^r}^{\infty} m_{\beta f^*}(y^{\frac{1}{r}})^{r/p} dy \right)^{p/r} { \textstyle\frac{\phi_1(t)}{t^p  } } dt
\leq \frac{C_1}{r}  \int_{\mathbb{R_+}} m_{\beta f^*}(y^{\frac{1}{r}})   \phi_2 \left( C_2 y^{\frac{1}{r}} \right) y^{\frac{1}{r}-1}dy.
\end{align*}
Again, with $y=s^r$ in either side of this last inequality we arrive at
\begin{align*}
 \int_{\mathbb{R_+}}    \left( \int_{t}^{\infty} m_{\beta f^*}(s)^{r/p} s^{r-1}ds \right)^{p/r} { \textstyle\frac{\phi_1(t)}{t^p  } } dt
\leq \frac{C_1}{r^{p/r}}  \int_{\mathbb{R_+}} m_{\beta f^*}(s)   \phi_2 \left( C_2 s \right) ds.
\end{align*}
%Since $m_{\beta f^*}(s)= m_{f^*}(s/ \beta)$ the latter amounts to
%\begin{align*}
% \int_{\mathbb{R_+}}    \left( \int_{t/ \beta}^{\infty} m_{ f^*}(s)^{r/p} s^{r-1}ds \right)^{p/r} { \textstyle\frac{\phi_1(t)}{t^p  } } dt
%\leq \frac{\beta^{1-p}C_1}{r^{p/r}}  \int_{\mathbb{R_+}} m_{ f^*}(s)   \phi_2 \left( \beta C_2  s \right) ds.
%\end{align*}
%\begin{equation*}
% 2^{p \left(-2+ 1/r \right)} p^{p/r}     \left(  \int_{t}^{\infty}  m_{g^*}(s)^{r/p} s^{r-1}ds \right)^{p/r}          \leq      t^p \,  m_{H^{p,r}g^*}(t)
%\end{equation*}
In view of (\ref{equivalent expression for distribution function of Hprf}) and (\ref{eq:modular inequality for Hpr}) we  have that
\begin{align*}
 \int_{\mathbb{R_+}}    \left( \int_{t}^{\infty} m_{\beta f^*}(s)^{r/p} s^{r-1}ds \right)^{p/r} { \textstyle\frac{\phi_1(t)}{t^p  } } dt
& \leq  p^{-p/r} \int_{\mathbb{R_+}} m_{H^{p,r}(\beta f^*)}(t) \, \phi_1(t)dt\\
& =  p^{-p/r} \int_{\mathbb{R_+}} \Phi_1 \left( H^{p,r} ( \beta f^* ) (t) \right) dt \\
& \leq  p^{-p/r} \int_{\mathbb{R_+}} \Phi_2 \left( C \beta f^*  (t) \right) dt \\
& =  {\textstyle\frac{C}{p^{p/r}}} \int_{\mathbb{R_+}} m_{\beta f^*}(s) \phi_2 (Cs)ds.
\end{align*}
%In view of (\ref{equivalent expression for distribution function of Hprf}) and the above inequality we need to show
%\begin{equation*}
% 2^{-p \left(-2+ 1/r \right)} p^{-p/r} \int_{\mathbb{R_+}} m_{H^{p,r}(\beta f^*)}(t) \, \phi_1(t)dt \leq \frac{C_1}{r^{p/r}}  \int_{\mathbb{R_+}} m_{\beta f^*}(s)   \phi_2 \left( C_2 s \right) ds,
%\end{equation*}
%that is,
%\begin{equation*}
% \int_{\mathbb{R_+}} m_{H^{p,r}(\beta f^*)}(t) \, \phi_1(t)dt \leq  2^{p \left(-2+ 1/r \right)} \left( \frac{p}{r} \right)^{p/r} \frac{C_1}{C_2}  \int_{\mathbb{R_+}} m_{C_2 \beta f^*}(s)   \phi_2 \left(  s \right) ds, \  \ 
%\end{equation*}
%or, in other words,
%\begin{equation*}
% \int_{\mathbb{R_+}} \Phi_1 \left( H^{p,r} (\beta f^*)(t) \right) dt \leq  2^{p \left(-2+ 1/r \right)} \left( \frac{p}{r} \right)^{p/r} \frac{C_1}{C_2}  \int_{\mathbb{R_+}} \Phi_1 \left( C_2 \beta f^*(t) \right) dt.
%\end{equation*}
So, if we choose $C_1 =  \left( \frac{p}{r} \right)^{-p/r}C$ and $C_2=C$, we have (\ref{eq:HD}) is implied by (\ref{eq:modular inequality for Hpr}).
\end{proof}

In the next two sections, we will be taking up the two-weight Hardy inequality (\ref{eq:HD}) on  nonnegative, nonincreasing functions with weights being functions involving  $\Phi_1$ and $\Phi_2$. As in the classical case, the inequality (\ref{eq:HD}) needs to be studied in two cases depending on whether $\frac{p}{r}>1$ or $\frac{p}{r} \leq 1$. 

\subsection{\texorpdfstring {The case $1 \leq r < p$}{}}\label{The case one less equal r less p}
The dual Hardy operator, $ g \mapsto (Qg)(y):= \int_y^{\infty} g^*(s)ds$, in (\ref{eq:HD}) is an example of a kernel operator, namely, an operator $T$ of the form
\begin{equation*}
Tf(x)= \int_{\mathbb{R_+}}K(x,y)f(y)dy,
\end{equation*}
in  which $f \in M_+(\mathbb{R_+},m)$, $x \in \mathbb{R_+}$ and the nonnegative kernel $K(x,y) \in M(\mathbb{R_+} \times \mathbb{R_+}, m \times m)$.

The following result of E. T. Sawyer \cite{Sw90} reduces the study of a weighted norm inequality for such a $T$ on nonnegative nonincreasing functions on $\mathbb{R_+}$, as in (\ref{eq:HD}), to one on nonnegative functions in $M(\mathbb{R_+},m)$.
%Let us begin by the first case, when $1< \frac{p}{r}$ or equivalently $1 \leq r < p$. Notice that the weighted Hardy inequality (\ref{eq:HD}), is considered on just nonincreasing functions not on all of the nonnegative functions. We now, therefore, use the duality principle of  E. Sawyer \cite{Sw} and get equivalent inequality (\ref{eq:T*N}) for a Hardy-type operator on nonnegative measurable functions.

%two new inequalities, both on nonnegative functions, which together will be equivalent to inequality (\ref{eq:HD}), which is just on nonnegative  and nonincreasing functions.
\begin{theorem}[E. T. Sawyer, {\cite{Sw90}}]\label{Sw}
Fix $p_1$ and $q_1$, $1< p_1,q_1 < \infty$, and suppose $w(x)$ and $v(x)$ are weights on $\mathbb{R_+}$.  Then, the inequality
\begin{equation}\label{eq:TND}
 \left( \int_{\mathbb{R_+}} Tf(x)^{q_1}w(x)dx \right)^{1/q_1} \leq C \left( \int_{\mathbb{R_+}} f(x)^{p_1}v(x)dx \right)^{1/p_1}
\end{equation}
holds with $C>0$ independent of the nonnegative and nonincreasing function $f$ on $\mathbb{R_+}$ if and only if
\begin{multline}\label{eq:T*N}
 \left( \int_{\mathbb{R_+}}  \textstyle\left( \int_0^x T^*g \right)^{p_1'}  \frac{v(x)}{ \left( \int_0^x v  \right)^{p_1'}   }    dx \right)^{1/p_1'}       +       \frac{      \left( \int_{\mathbb{R_+}} T^*g \right)                 }{   V(\infty)^{1/p_1'}}      \leq C \left( \int_{\mathbb{R_+}} g(x)^{q_1'}w(x)^{1-q_1'}dx \right)^{1/q_1'},
\end{multline}
where $C>0$ does not depend on nonnegative  $g$ in $M(\mathbb{R_+})$. Here $T^*$ is the adjoint of $T$ given by \[T^*g(y)= \int_{\mathbb{R_+}}K(z,y)g(z)dz, \ \ \text{for all} \ y \in \mathbb{R_+}, \] \[ V(x)= \int_0^x v, \ \  \text{for all} \  x \in \mathbb{R_+}\]  and \[ V(\infty)= \lim_{x \rightarrow \infty} V(x).\]
\end{theorem}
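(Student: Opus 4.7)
The plan is to reduce the problem over the cone of nonincreasing functions to a pointwise-in-$g$ condition via two successive duality steps.

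First, I would eliminate the kernel operator by standard Lebesgue duality for the target space. Writing
$$\|Tf\|_{L^{q_1}(w\,dx)} = \sup_{g \ge 0}\int_{\mathbb{R}_+} (Tf)(x)\,g(x)\,dx,$$
where the supremum runs over $g \in M_+(\mathbb{R}_+)$ with $\int g^{q_1'}w^{1-q_1'} \le 1$, and applying Fubini to obtain $\int Tf\cdot g = \int f\cdot T^*g$, the inequality (\ref{eq:TND}) is equivalent to
$$\int_{\mathbb{R}_+} f(x)\,(T^*g)(x)\,dx \le C\,\|f\|_{L^{p_1}(v)}\,\|g\|_{L^{q_1'}(w^{1-q_1'})}$$
for all nonincreasing $f \ge 0$ and all nonnegative $g$. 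Thus everything reduces to identifying, for each fixed $g$, the best constant $C(g)$ in the inner inequality $\int f h \le C(g)\|f\|_{L^{p_1}(v)}$, where $h := T^*g$ and $f$ ranges over nonincreasing nonnegative functions.

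The heart of the proof is the duality principle for the cone of nonincreasing functions. The key device is the representation $f(x) = \int_{[0,\infty)} \chi_{[0,t)}(x)\,d\sigma(t)$ for a unique nonnegative Borel measure $\sigma$ on $\mathbb{R}_+$, which linearises $\int fh$ into $\int H(t)\,d\sigma(t)$, with $H(t) = \int_0^t h$, and allows one to bound $\int f^{p_1} v$ from above and below in terms of integrals against $V$. A Hardy-type argument (or equivalently, Sinnamon's level function trick) then shows
$$C(g) \approx \left(\int_{\mathbb{R}_+}\Bigl(\frac{H(x)}{V(x)}\Bigr)^{p_1'}v(x)\,dx\right)^{\!1/p_1'} + \frac{H(\infty)}{V(\infty)^{1/p_1'}},$$
which is precisely the left-hand side of (\ref{eq:T*N}) with $H(x) = \int_0^x T^*g$. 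Insisting that $C(g) \le C\,\|g\|_{L^{q_1'}(w^{1-q_1'})}$ for every admissible $g$ then yields (\ref{eq:T*N}), completing the equivalence.

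The principal obstacle is the second step, i.e., establishing the cone duality equivalence with sharp two-term expression. The upper bound requires a careful decomposition of nonincreasing $f$ via its representing measure together with an application of the weighted Hardy inequality (with sharp Muckenhoupt-type constants). The necessity of both summands on the right is seen by testing against two distinct families of near-extremisers: constants and indicators $f = \chi_{[0,t]}$ give the second term, while more elaborate staircase functions tuned to the profile of $H/V$ saturate the first. Once the cone duality is in hand, the rest of the argument is a routine application of Fubini and Hölder.
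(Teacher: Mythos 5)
The paper does not prove this theorem; it is cited verbatim from Sawyer's 1990 paper (reference \cite{Sw90}), so there is no in-paper proof to compare against. Your sketch correctly reproduces Sawyer's own line of argument: $L^{q_1}(w)$-duality and Fubini reduce the kernel inequality to the bilinear form $\int f\,T^{*}g$ over nonincreasing $f$, and the cone duality principle (Sawyer's Theorem~1 in that same paper) converts the resulting inner supremum into the two-term expression $\bigl(\int (H/V)^{p_1'}v\bigr)^{1/p_1'}+H(\infty)/V(\infty)^{1/p_1'}$ with $H=\int_0^{\cdot}T^{*}g$. One remark worth making explicit: testing with $f=\chi_{[0,t]}$ gives only the \emph{pointwise} bound $H(t)\le C\,V(t)^{1/p_1}$, which is weaker than the integral term $\bigl(\int (H/V)^{p_1'}v\bigr)^{1/p_1'}$ (since $\int v/V$ diverges in general), so the ``more elaborate staircase functions'' you allude to are genuinely necessary, not a refinement; Sawyer's construction, or Sinnamon's level-function machinery, fills exactly this gap. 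With that caveat noted, your proposal is a faithful and correct account of the standard proof.
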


The inequality (\ref{eq:HD}) can now be rephrased as (\ref{eq:TND}) with $ (Tg)(x)= \int_x^{\infty} g^*(s)ds$, the dual Hardy operator, $p_1=q_1= p/r>1$, $w(y)= \frac{ \phi_1(y^{\frac{1}{r}}) }{ y^{ \frac{p-1}{r}+1 } }$ and  $v(x)= \frac{ \phi_2(C_2x^{\frac{1}{r}} )}{x^{1-\frac{1}{r}}} $.

%%With the following setting, we can realize inequality (\ref{eq:HD}) as inequality (\ref{eq:TND}):
%\begin{enumerate}
%\item $p_1=q_1= p/r>1 $,
%\item $v(x)= \frac{x^{ \frac{1}{r}} \phi_2(C_2x^{\frac{1}{r}} )}{x} $, with $V(x) = \int_0^{x}v(y)dy= \int_0^{x} \phi_2(C_2y^{\frac{1}{r}} )y^{\frac{1}{r}-1}dy = \frac{r}{C_2}\int_0^{C_2x^{\frac{1}{r}}} \phi_2(y) dy= \frac{r}{C_2} \Phi_2(C_2x^{\frac{1}{r}}) \approx \frac{r}{C_2}C_2t^{\frac{1}{r}} \phi_2(C_2t^{\frac{1}{r}})=rt^{\frac{1}{r}} \phi_2(C_2t^{\frac{1}{r}})  $,
%\item $w(x)= \frac{ \phi_1(x^{\frac{1}{r}}) }{ x^{ \frac{p-1}{r}+1 } }$.
%\end{enumerate}
With this,
\[ V(x) = \int_0^{x} \phi_2(C_2y^{\frac{1}{r}} )y^{\frac{1}{r}-1}dy = \frac{r}{C_2}\int_0^{C_2x^{\frac{1}{r}}} \phi_2(y) dy= \frac{r}{C_2} \Phi_2(C_2x^{\frac{1}{r}})  \]
and 
\[\int_0^x T^*g = \int_0^x \left[ \int_0^y g(t)dt \right]dy = \int_0^x (x-y)g(y)dy = (I_2g)(x),\]
is the Riemann-Liouville fractional integral operator of order $2$ .

Since
\[V(\infty)= \lim_{x \rightarrow \infty} V(x)= \lim_{x \rightarrow \infty} \Phi_2(C_2x^{\frac{1}{r}})= \infty, \]
(\ref{eq:T*N}) amounts to the inequality
\begin{equation}\label{eq:1rp main ineq}
\left( \int_0^{\infty} I_2g(x)^{p_1'}  \frac{v(x)}{ V(x)^{p_1'}   }    dx \right)^{1/p_1'}    \leq C \left( \int_0^{\infty} g(x)^{q_1'}w(x)^{1-q_1'}dx \right)^{1/q_1'}
\end{equation}
for $0 \leq g \in M(\mathbb{R_+},m)$, with $p_1, w, v$ and $V$ as specified above.

%After putting inequality (\ref{eq:HD}) in the form of inequality (\ref{eq:TND}),  we have from Sawyer's duality principle, that is  \Cref{Sw}, that inequality (\ref{eq:HD}) holds for all nonincreasing functions if and only if
%\begin{equation}
%\left( \int_0^{\infty} I_2g(x)^{p_1'}  \frac{v(x)}{ V(x)^{p_1'}   }    dx \right)^{1/p_1'}    \leq C \left( \int_0^{\infty} g(x)^{q_1'}w(x)^{1-q_1'}dx \right)^{1/q_1'}
%\end{equation}
%or equivalently,
%\begin{equation}
%\left( \int_0^{\infty} \left[  I_2g(x)  \frac{v(x)^{\frac{1}{p_1'}}}{ V(x) } \right]^{p_1'}    dx \right)^{1/p_1'}    \leq C \left( \int_0^{\infty}\left[  g(x)w(x)^{-\frac{1}{q_1}} \right]^{q_1'} dx \right)^{1/q_1'}
%\end{equation}
%for all nonnegative functions $g$ and where $I_2g(x) := \int_0^x (x-y)g(y)dy $ is Riemann Liouville fractional integral operator of order-$2$ . It is so because, $\int_0^x T^*g(y)dy= I(T^*g)(x)=I(Ig)(x)$ and  $I_2g(x) := \int_0^x (x-y)g(y)dy = \int_0^x \left[ \int_0^y g(t)dt \right]dy = I(Ig)(x)$, where $Ig(x)= \int_0^xg(y)dy$ is Hardy operator  and the second term in (\ref{eq:T*N}) gives no contribution, because $\int_0^{\infty}v(x)dx = \infty$. 

Now, a special case of the main result in Stepanov \cite{Stp90}, asserts that (\ref{eq:1rp main ineq}) holds if and only if for all $x \in \mathbb{R_+}$
\begin{equation}\label{eq:condition1 modified by Kerman}
 \left( \int_t^{\infty} (y-t)^{p_1'}  \frac{v(y)}{V(y)^{p_1'}}   dy \right)^{\frac{1}{p_1'}}  \left( \int_0^{t}  w(y) dy \right)^{\frac{1}{p_1}}< \infty,
 \end{equation}
 and
\begin{equation}\label{eq:condition2 modified by Kerman}
 \left( \int_t^{\infty}  \frac{v(y)}{V(y)^{p_1'}} dy \right)^{\frac{1}{p_1'}}  \left( \int_0^{t} (t-y)^{p_1} w(y) dy \right)^{\frac{1}{p_1}}< \infty,
 \end{equation}
Making the change of variable $x \rightarrow x^{\frac{1}{r}}$ and $y \rightarrow y^{\frac{1}{r}}$,  in the expressions for $w,v$ and $V$, we arrive at the conditions
%\begin{align}\label{eq:CONDITIONS 1,2 modified by Kerman}
%\begin{split}
%& \left(\int_{x}^{\infty} (y^r-x^r)^{(p/r)'}   \phi_2(C_2y)^{1-(p/r)'} y^{-(p/r)'} dy \right)^{\frac{1}{(p/r)'}}  \left( \int_0^{x}\frac{ \phi_1(y) }{y^{p}}dy \right)^{\frac{1}{(p/r)}} \leq A,\\
%&\left(\int_{x}^{\infty}   \phi_2(C_2y)^{1-(p/r)'} y^{-(p/r)'} dy \right)^{\frac{1}{(p/r)'}} 
% \left( \int_0^{x} (x^r-y^r)^{p/r}  \frac{ \phi_1(y) }{y^{p}}dy \right)^{\frac{1}{(p/r)}} \leq A. 
%\end{split}
%\end{align}
%
\begin{align}\label{eq:CONDITIONS 1,2 modified by Kerman}
\begin{split}
& \left(\int_{x}^{\infty} (y^r-x^r)^{p_1'}   \frac{\phi_2(C_2y)}{\Phi_2(C_2y)^{p_1'}} dy \right)^{\frac{1}{p_1'}}  \left( \int_0^{x}\frac{ \phi_1(y) }{y^{p}}dy \right)^{\frac{1}{p_1}} \leq A,\\
&   \left(\int_{x}^{\infty}   \frac{\phi_2(C_2y)}{\Phi_2(C_2y)^{p_1'}} dy \right)^{\frac{1}{p_1'}}
 \left( \int_0^{x} (x^r-y^r)^{p_1}  \frac{ \phi_1(y) }{y^{p}}dy \right)^{\frac{1}{p_1}} \leq A. 
\end{split}
\end{align}

%\begin{align}\label{eq:CONDITION1 modified by Kerman}
%\left(\int_{x}^{\infty} (y^r-x^r)^{(p/r)'}   \phi_2(C_2y)^{1-(p/r)'} y^{-(p/r)'} dy \right)^{\frac{1}{(p/r)'}}  \left( \int_0^{x}\frac{ \phi_1(y) }{y^{p}}dy \right)^{\frac{1}{(p/r)}} \leq A,
%\end{align}
%and
%\begin{align}\label{eq:CONDITION2 modified by Kerman}
%\left(\int_{x}^{\infty}   \phi_2(C_2y)^{1-(p/r)'} y^{-(p/r)'} dy \right)^{\frac{1}{(p/r)'}} 
% \left( \int_0^{x} (x^r-y^r)^{p/r}  \frac{ \phi_1(y) }{y^{p}}dy \right)^{\frac{1}{(p/r)}} \leq A. 
%\end{align}

The operator $H^{p,1}$, given at $f \in M_+(\mathbb{R_+},m)$ by

\[H^{p,1}f(t)= t^{-\frac{1}{p}}\int_0^t f(s)s^{\frac{1}{p}-1}ds,\]
was found by A. Cianchi, \cite{Ci99}, to satisfy the norm inequality

\begin{equation}\label{eq: norm inequality for Hp1}
\|H^{p,1}f\|_{L_{\Phi_1}(\mathbb{R_+},m)} \leq C \|f\|_{L_{\Phi_1}(\mathbb{R_+},m)}
\end{equation}
if and only if there exist constants $D,B>0$ such that for all $x \in \mathbb{R_+}$,
\begin{equation}\label{eq: Cianchi condition for Hp1}
\left( \int_x^{\infty} \frac{\phi_2(Dy)}{\Phi_2(Dy)^{p'}}y^{p'} dy \right)^{\frac{1}{p'}}  \left( \int_0^{x}\frac{ \phi_1(y) }{y^{p}}dy \right)^{\frac{1}{p}} \leq B.
\end{equation}
This suggests the possibility that the two conditions for the norm boundedness of $H^{p,r}, 1 \leq r < p$ in Theorem 4.3.2 can be replaced by a single condition. That this is the case is the content of

\begin{theorem}\label{main result 1 class Wpr modified by Kerman}
Let $(X,\mu)$ and $(Y,\nu)$ be $\sigma$-finite  measure spaces with $\mu(X)=\nu(Y)=\infty$, the latter being nonatomic and separable. Fix the indices $p$ and $r$, $1 < p<\infty$ and $ 1 \leq r < p$. Suppose $\Phi_{i}(t)= \int_0^t \phi_{i}(s)ds, \ i=1,2$,  are Young functions.
Then, with $p_1= \frac{p}{r}$, the following are equivalent:  

\begin{enumerate}
\item
 To each $T \in W((p,r),(\infty,\infty);\mu,\nu)$ there corresponds $C>0$ such that
\begin{equation}\label{eq: MT1 Kerman}
 \| Tf \|_{L_{\Phi_1}(Y, \nu)} \leq C \| f \|_{L_{\Phi_2}(X,\mu)},
\end{equation}
for all $f\in L_{\Phi_2}(X,\mu)$;
\item
There exist $B,D>0$ such that for all $x \in \mathbb{R_+}$,
\begin{equation}\label{Cianchi type condition Kerman}
\left( \int_x^{\infty} \frac{\phi_2(Dy)}{\Phi_2(Dy)^{p_1'}}y^{rp_1'} dy \right)^{\frac{1}{p_1'}}  \left( \int_0^{x}\frac{ \phi_1(y) }{y^{p}}dy \right)^{\frac{1}{p}} \leq B,
\end{equation}
namely,
\begin{equation*}\label{eq:cianchitype Kerman}
\left( \int_x^{\infty} \frac{\phi_2(Dy)}{\Phi_2(Dy)^{\frac{p}{p-r}}}y^{\frac{rp}{p-r}} dy \right)^{1-\frac{r}{p}}  \left( \int_0^{x}\frac{ \phi_1(y) }{y^{p}}dy \right)^{\frac{1}{p}} \leq B.
\end{equation*}
\end{enumerate}
\end{theorem}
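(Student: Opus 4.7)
The plan is to piece together the equivalences already established earlier in this section and then close with a final comparison between the pair of Stepanov-type conditions and the single condition~(\ref{Cianchi type condition Kerman}). More precisely: by the Calder\'on-type \Cref{Calderon theorem}, (1) is equivalent to the boundedness of $H^{p,r}:L_{\Phi_2}(\R,m)\to L_{\Phi_1}(\R,m)$; by \Cref{Hpr dilation invariant} together with \Cref{EquvlcHprnormandmod} this is equivalent to the modular inequality~(\ref{eq:modd}); by \Cref{Ineq equi Mod Hpr} the modular inequality is in turn equivalent to the weighted Hardy inequality~(\ref{eq:HD}); Sawyer's duality (\Cref{Sw}), applied with $p_1=q_1=p/r$ and the weights $w(x)=\phi_1(x^{1/r})/x^{(p-1)/r+1}$, $v(x)=\phi_2(C_2x^{1/r})x^{1/r-1}$, converts~(\ref{eq:HD}) into~(\ref{eq:1rp main ineq}); and Stepanov's theorem characterizes~(\ref{eq:1rp main ineq}) by the pair~(\ref{eq:condition1 modified by Kerman})--(\ref{eq:condition2 modified by Kerman}). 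After the substitutions $x\to x^{1/r}$, $y\to y^{1/r}$ this pair takes the form~(\ref{eq:CONDITIONS 1,2 modified by Kerman}). Hence (1) is equivalent to~(\ref{eq:CONDITIONS 1,2 modified by Kerman}), and it remains to show that~(\ref{eq:CONDITIONS 1,2 modified by Kerman}) is equivalent to~(\ref{Cianchi type condition Kerman}).

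The direction~(\ref{Cianchi type condition Kerman})~$\Rightarrow$~(\ref{eq:CONDITIONS 1,2 modified by Kerman}) is immediate by size estimates. The first inequality of~(\ref{eq:CONDITIONS 1,2 modified by Kerman}) follows from $(y^r-x^r)^{p_1'}\le y^{rp_1'}$ in the outer $\phi_2$-integral, while the second follows by combining $(x^r-y^r)^{p_1}\le x^{rp_1}=x^p$ in the $\phi_1$-integral with the bound
\[
\Bigl(\int_x^\infty \tfrac{\phi_2(Dy)}{\Phi_2(Dy)^{p_1'}}\,dy\Bigr)^{1/p_1'}\le x^{-r}\Bigl(\int_x^\infty y^{rp_1'}\tfrac{\phi_2(Dy)}{\Phi_2(Dy)^{p_1'}}\,dy\Bigr)^{1/p_1'},
\]
which is valid because $y\ge x$ on the range of integration. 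Multiplying the two estimates produces the second inequality of~(\ref{eq:CONDITIONS 1,2 modified by Kerman}) from~(\ref{Cianchi type condition Kerman}).

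For the converse direction I would start from the convexity-type bound $y^{rp_1'}\le 2^{p_1'-1}\bigl[(y^r-x^r)^{p_1'}+x^{rp_1'}\bigr]$. Integrating against $\phi_2(Dy)/\Phi_2(Dy)^{p_1'}\,dy$, raising to the $1/p_1'$-th power, and multiplying by $\bigl(\int_0^x\phi_1(y)/y^p\,dy\bigr)^{1/p_1}$, the left-hand side of~(\ref{Cianchi type condition Kerman}) is bounded by a constant multiple of the left-hand side of the first inequality in~(\ref{eq:CONDITIONS 1,2 modified by Kerman}) plus the residual
\[
x^r\Bigl(\int_x^\infty\tfrac{\phi_2(Dy)}{\Phi_2(Dy)^{p_1'}}\,dy\Bigr)^{1/p_1'}\Bigl(\int_0^x\tfrac{\phi_1(y)}{y^p}\,dy\Bigr)^{1/p_1}.
\]
To dominate this residual by the left-hand side of the second inequality of~(\ref{eq:CONDITIONS 1,2 modified by Kerman}) it suffices to establish the auxiliary estimate
\[
x^{p}\int_0^x \tfrac{\phi_1(y)}{y^p}\,dy\le C\int_0^x(x^r-y^r)^{p_1}\tfrac{\phi_1(y)}{y^p}\,dy,
\]
which I would obtain by splitting $\int_0^x=\int_0^{x/2^{1/r}}+\int_{x/2^{1/r}}^{x}$: on the first interval $(x^r-y^r)\ge x^r/2$ yields the inequality at once, and on the second interval the monotonicity of $\phi_1$ (a consequence of the convexity of $\Phi_1$) together with the Young-function bound $t\phi_1(t)\le\Phi_1(2t)$ from~(\ref{eq:ygp}) allows the tail to be absorbed into the bulk integral.

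The principal difficulty is this final absorption step: the residual near $y=x$ is genuinely non-trivial and does not yield to purely size-based estimates, so the Young-function structure of $\Phi_1$ (via $\phi_1$ nondecreasing) must be exploited in a delicate way, and this is precisely what makes the two Stepanov-type conditions~(\ref{eq:CONDITIONS 1,2 modified by Kerman}) collapse to the single condition~(\ref{Cianchi type condition Kerman}). Should the direct absorption prove too crude, an alternative route is to derive~(\ref{Cianchi type condition Kerman}) directly from the Hardy inequality~(\ref{eq:HD}) by inserting suitable test functions (for instance $g=\chi_{[0,s_0)}$ and power-type perturbations) to extract each factor of~(\ref{Cianchi type condition Kerman}) as a necessary condition, thereby bypassing~(\ref{eq:CONDITIONS 1,2 modified by Kerman}) altogether.
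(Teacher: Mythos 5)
Your overall scaffolding is correct and essentially matches the paper's: you correctly chain together \Cref{Calderon theorem}, \Cref{EquvlcHprnormandmod}, \Cref{Ineq equi Mod Hpr}, Sawyer's \Cref{Sw}, and Stepanov's characterization to reduce assertion (1) to the pair of conditions~(\ref{eq:CONDITIONS 1,2 modified by Kerman}), and your treatment of the implication (\ref{Cianchi type condition Kerman})~$\Rightarrow$~(\ref{eq:CONDITIONS 1,2 modified by Kerman}) via elementary size estimates is also correct and close to the paper's.

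However, there is a genuine gap in the converse direction. The auxiliary estimate you propose,
\[
x^{p}\int_0^x \frac{\phi_1(y)}{y^p}\,dy\le C\int_0^x(x^r-y^r)^{p_1}\frac{\phi_1(y)}{y^p}\,dy,
\]
is false, and the monotonicity of $\phi_1$ and the bound~(\ref{eq:ygp}) cannot rescue it. Indeed, take $\phi_1$ vanishing on $[0,x-\varepsilon]$ and identically $1$ on $[x-\varepsilon,\infty)$ (a valid Young-function derivative). Then the left-hand side is comparable to $\varepsilon$, while on the right the kernel satisfies $x^r-y^r\lesssim x^{r-1}\varepsilon$ for $y\in[x-\varepsilon,x]$, so the right-hand side is comparable to $\varepsilon^{p_1+1}$; since $p_1=p/r>1$ this tends to zero faster, violating the inequality as $\varepsilon\to0$. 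The difficulty is precisely that $(x^r-y^r)^{p_1}$ degenerates at $y=x$, so no pointwise or absorption argument keeping the endpoint at $x$ can succeed. The paper avoids this entirely: on the near-diagonal piece $I(x)$ it replaces $x^r$ by $2x^r=(2^{1/r}x)^r$, using $2x^r-y^r\ge x^r$ for $y\le x$, extends the $\phi_1$-integral up to $2^{1/r}x$, and then applies the \emph{second} Stepanov condition \emph{at the shifted point} $2^{1/r}x$; this is the step your proposal is missing. Your suggested fallback of testing the Hardy inequality directly with characteristic functions is plausible in principle but is not carried out, so as written the proof is incomplete at this point.
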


%\begin{remark}
%We have shown that $(1)$ in \Cref{main result 1 class Wpr modified by Kerman} reduces to the weighted Lebesgue inequality (\ref{eq:HD}) involving nonnegative nonincreasing functions $g$ on $\mathbb{R_+}$. The necessary and sufficient condition (\ref{eq: Cianchi condition for Hp1}) is the one given by A. Cianchi in \cite{Ci99} in order that (\ref{eq: norm inequality for Hp1}) holds for all nonnegative $f$ in $M(\mathbb{R_+})$.
%\end{remark}

\begin{proof}
%In view of \Cref{Sw} we must show (\ref{eq:CONDITIONS 1,2 modified by Kerman}) holds if only if (\ref{Cianchi type condition Kerman}) does. 
We have shown in  \Cref{Calderon theorem}  and \Cref{Ineq equi Mod Hpr} that the inequality in (1) is equivalent to the weighted Hardy inequality (\ref{eq:HD}) for nonnegative, nonincreasing functions and hence to the inequality (\ref{eq:1rp main ineq}) for nonnegative, measurable functions. We need thus only show that (\ref{eq:CONDITIONS 1,2 modified by Kerman}) holds if and only if (\ref{Cianchi type condition Kerman}) does. It is important to observe that it follows from our previous considerations, (\ref{Cianchi type condition Kerman}) holds for $D'>D$ whenever it holds for $D$. 

Suppose, first, we have (\ref{eq:CONDITIONS 1,2 modified by Kerman}). Now, with $D=2^{\frac{1}{r}}C$
 
 \begin{equation*}
 \int_x^{\infty} \frac{\phi_2(Dy)}{\Phi_2(Dy)^{p_1'}}y^{rp_1'} dy   =  \left( \int_x^{2^{\frac{1}{r}}x} + \int_{2^{\frac{1}{r}}x}^{\infty}  \right) \frac{\phi_2(Dy)}{\Phi_2(Dy)^{p_1'}}y^{rp_1'} dy= I(x) +II(x).
\end{equation*}
Thus, the left side of (\ref{Cianchi type condition Kerman}) is dominated by
\begin{equation*}
I(x)^{\frac{1}{p_1'}}\left( \int_0^{x}\frac{ \phi_1(y) }{y^{p}}dy \right)^{\frac{1}{p_1}} + II(x)^{\frac{1}{p_1'}}\left( \int_0^{x}\frac{ \phi_1(y) }{y^{p}}dy \right)^{\frac{1}{p_1}}.
\end{equation*}
But, for $y \geq 2^{\frac{1}{r}}x$,
\begin{equation*}
\begin{split}
II(x) & \leq 2^{p_1'} \int_{2^{\frac{1}{r}}x}^{\infty} \left( 1-\frac{x^r}{y^r} \right)^{p_1'}\frac{\phi_2(Dy)}{\Phi_2(Dy)^{p_1'}} y^{rp_1'}dy\\
& \leq 2^{p_1'} \int_{x}^{\infty} \left( y^r-x^r \right)^{p_1'}\frac{\phi_2(Cy)}{\Phi_2(Cy)^{p_1'}} dy.
\end{split}
\end{equation*}
Therefore, the first condition in (\ref{eq:CONDITIONS 1,2 modified by Kerman}) ensures that
\begin{equation*}
II(x)^{\frac{1}{p_1'}}\left( \int_0^{x}\frac{ \phi_1(y) }{y^{p}}dy \right)^{\frac{1}{p_1}} \leq 2A.
\end{equation*}
Again, with $D=2^{\frac{1}{r}}C$,
\begin{equation*}
\begin{split}
I(x)^{\frac{1}{p_1'}}\left( \int_0^{x}\frac{ \phi_1(y) }{y^{p}}dy \right)^{\frac{1}{p_1}} & = \left( \int_x^{2^{\frac{1}{r}}x}    \frac{\phi_2(Dy)}{\Phi_2(Dy)^{p_1'}}y^{rp_1'} dy \right)^{\frac{1}{p_1'}}   \left( \int_0^{x}\frac{ \phi_1(y) }{y^{p}}dy \right)^{\frac{1}{p_1}}\\
& \leq 2x^{r}\left( \int_x^{2^{\frac{1}{r}}x}    \frac{\phi_2(Dy)}{\Phi_2(Dy)^{p_1'}} dy \right)^{\frac{1}{p_1'}}   \left( \int_0^{x}\frac{ \phi_1(y) }{y^{p}}dy \right)^{\frac{1}{p_1}}\\
& \leq 2^{1- \frac{1}{rp_1'}}\left( \int_{2^{\frac{1}{r}}x}^{4^{\frac{1}{r}}x}    \frac{\phi_2(Cy)}{\Phi_2(Cy)^{p_1'}} dy \right)^{\frac{1}{p_1'}}   \left(x^{p} \int_0^{x}\frac{ \phi_1(y) }{y^{p}}dy \right)^{\frac{1}{p_1}}.
\end{split}
\end{equation*}
Now,
\begin{equation*}
x^{p} \int_0^{x}\frac{ \phi_1(y) }{y^{p}}dy \leq  \int_0^{x} (2x^r-y^r)^{p_1} \frac{ \phi_1(y) }{y^{p}}dy \leq  \int_0^{2^{\frac{1}{r}}x} ((2^{\frac{1}{r}}x)^r-y^r)^{p_1} \frac{ \phi_1(y) }{y^{p}}dy.
\end{equation*}
Altogether, then,
\begin{equation*}
\begin{split}
I(x)^{\frac{1}{p_1'}}\left( \int_0^{x}\frac{ \phi_1(y) }{y^{p}}dy \right)^{\frac{1}{p_1}} 
& \leq 2^{1- \frac{1}{rp_1'}}\left( \int_{2^{\frac{1}{r}}x}^{\infty}    \frac{\phi_2(Cy)}{\Phi_2(Cy)^{p_1'}} dy \right)^{\frac{1}{p_1'}}   \left(\int_0^{2^{\frac{1}{r}}x} ((2^{\frac{1}{r}}x)^r-y^r)^{p_1} \frac{ \phi_1(y) }{y^{p}}dy \right)^{\frac{1}{p_1}}\\
& \leq 2^{1- \frac{1}{rp_1'}} A.
\end{split}
\end{equation*}

Next, assume (\ref{Cianchi type condition Kerman}) holds with constant $D>0$. Fix $x \in \mathbb{R_+}$. The left side of the first condition in (\ref{eq:CONDITIONS 1,2 modified by Kerman}) is equal to 
\begin{align*}
& \left(\int_{x}^{\infty} \left( 1-\frac{x^r}{y^r} \right)^{p_1'}\frac{\phi_2(Dy)}{\Phi_2(Dy)^{p_1'}} y^{rp_1'}dy \right)^{\frac{1}{p_1'}}   \left( \int_0^{x}\frac{ \phi_1(y) }{y^{p}}dy \right)^{\frac{1}{p_1}} \\
& \leq \left(\int_{x}^{\infty} \frac{\phi_2(Dy)}{\Phi_2(Dy)^{p_1'}} y^{rp_1'}dy \right)^{\frac{1}{p_1'}}   \left( \int_0^{x}\frac{ \phi_1(y) }{y^{p}}dy \right)^{\frac{1}{p_1}} \leq B.
\end{align*}
Again, 
%the left side of the second condition in (\ref{eq:CONDITIONS 1,2 modified by Kerman}) is less than
\begin{align*}
& \left(\int_{x}^{\infty}   \frac{\phi_2(C_2y)}{\Phi_2(C_2y)^{p_1'}} dy \right)^{\frac{1}{p_1'}}
 \left( \int_0^{x} (x^r-y^r)^{p_1}  \frac{ \phi_1(y) }{y^{p}}dy \right)^{\frac{1}{p_1}}\\
& \leq \left(\int_{x}^{\infty} \frac{\phi_2(Dy)}{\Phi_2(Dy)^{p_1'}} x^{rp_1'}dy \right)^{\frac{1}{p_1'}}   \left( \int_0^{x}\frac{ \phi_1(y) }{y^{p}}dy \right)^{\frac{1}{p_1}}\\
& \leq \left(\int_{x}^{\infty} \frac{\phi_2(Dy)}{\Phi_2(Dy)^{p_1'}} y^{rp_1'}dy \right)^{\frac{1}{p_1'}}   \left( \int_0^{x}\frac{ \phi_1(y) }{y^{p}}dy \right)^{\frac{1}{p_1}} \leq B.
\end{align*}
\end{proof}

\subsection{\texorpdfstring {The case $ p \leq r <\infty$}{}}
We have shown in \Cref{Calderon type theorem} and \Cref{Second section} that $(L_{\Phi_2}(X,\mu), L_{\Phi_1}(Y,\nu))$ is an interpolation pair for $W((p,r),(\infty, \infty);\mu,\nu)$ if and only if there holds the modular inequality
\begin{equation*}
  \int_{\mathbb{R_+}}\Phi_1(H^{p,r}f^*(t))dt \leq \int_{\mathbb{R_+}}\Phi_2(Kf^*(s))ds,
\end{equation*}
for all $f \in M_+(\mathbb{R_+},m)$. We will prove that this modular inequality holds if and only if $\Phi_1$ and $\Phi_2$ satisfy the Zygmund-Str\"omberg condition: There exist $A,B>0$ such that for all $t \in \mathbb{R_+}$,
\begin{equation*}
t^p \int_0^t \frac{\Phi_1(s)}{s^{p+1}}ds \leq A \Phi_2(Bt).
\end{equation*}

Our complete result is
\begin{theorem}\label{main result 22 class Wpr}
Fix the indices $p$ and $r$, where $1 < p<\infty$ and $ p \leq r < \infty$. Let $(X,\mu)$ and $(Y,\nu)$ be $\sigma$-finite measure spaces with $\mu(X)=\nu(Y)=\infty$, the latter being nonatomic and separable.  Suppose $\Phi_1$ and $\Phi_2$ are Young functions.
%, where $\int_0^1 \frac{\Phi_1(s)}{s^{p+1}}ds < \infty$. 
Then, the following are equivalent:  

\begin{enumerate}
\item
 To each $T \in W((p,r),(\infty,\infty);\mu,\nu)$ there corresponds $C>0$ such that
\begin{equation}
 \| Tf \|_{L_{\Phi_1}(Y, \nu)} \leq C \| f \|_{L_{\Phi_2}(X,\mu)},
\end{equation}
whenever $f\in L_{\Phi_2}(X,\mu)$;
\item 
\begin{equation}\label{Modular inequality in main theorem r begger than p}
  \int_{\mathbb{R_+}}\Phi_1(H^{p,r}f^*(t))dt \leq \int_{\mathbb{R_+}}\Phi_2(Kf^*(s))ds,
\end{equation}
for all $f \in M_+(\mathbb{R_+},m)$
\item
One has 
\begin{equation}\label{eq:Tprcond}
t^p \int_0^t \frac{\Phi_1(s)}{s^{p+1}}ds \leq A \Phi_2(Bt),
\end{equation}
in which $A,B>0$ are independent of  $t \in \mathbb{R_+}$.
\end{enumerate}
\end{theorem}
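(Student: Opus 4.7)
My plan is to show the cycle $(1)\Leftrightarrow(2)\Leftrightarrow(3)$. The first equivalence is essentially free: \Cref{Calderon theorem} converts (1) into norm boundedness of $H^{p,r}:L_{\Phi_2}(\mathbb{R_+},m)\to L_{\Phi_1}(\mathbb{R_+},m)$, and \Cref{Hpr dilation invariant} combined with \Cref{EquvlcHprnormandmod} then turns this into the modular inequality (2). So the substantive task is $(2)\Leftrightarrow(3)$.

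\textbf{(2)$\Rightarrow$(3).} I would test (2) on $f=\lambda\chi_{(0,a)}$ for arbitrary $\lambda,a>0$. Since $f$ is its own decreasing rearrangement, a direct computation gives
\[
H^{p,r}f(t)=\lambda L\,\chi_{(0,a)}(t)+\lambda L(a/t)^{1/p}\chi_{(a,\infty)}(t),\qquad L=(p/r)^{1/r}.
\]
Substituting in (2) and performing the change of variable $u=\lambda L(a/t)^{1/p}$ on $\{t>a\}$ reduces (2), after dividing through by $a$ and writing $\tau=\lambda L$, to
\[
\Phi_1(\tau)+p\tau^p\int_0^{\tau}\frac{\Phi_1(u)}{u^{p+1}}du\leq\Phi_2(K\tau/L),
\]
and dropping the first term on the left gives exactly (3) with $A=1/p$, $B=K/L$.

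\textbf{(3)$\Rightarrow$(2), and main obstacle.} The plan is to reduce to the $r=p$ case via the pointwise dominance
\[
H^{p,r}g(t)\leq (p/r)^{1/r}H^{p,p}g(t)
\]
valid for every nonincreasing $g\geq 0$ when $r\geq p$. By dilation-invariance it suffices to check this at $t=1$; setting $h=g^p$ and $\alpha=r/p\geq 1$, it reduces to
\[
\int_0^1 h(\tau)^\alpha\tau^{\alpha-1}d\tau\leq\alpha^{-1}\left(\int_0^1 h(\tau)d\tau\right)^{\alpha},
\]
which I would obtain from the layer-cake representation $h=\int_0^\infty\chi_{\{h>u\}}du$ and Minkowski's integral inequality in $L^\alpha((0,1),\tau^{\alpha-1}d\tau)$: for decreasing $h$ one has $\chi_{\{h>u\}}=\chi_{(0,m(u))}$, whose $L^\alpha(\tau^{\alpha-1}d\tau)$-norm is $\alpha^{-1/\alpha}m(u)$, and $\int_0^\infty m(u)du=\int_0^1 h$. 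This Minkowski-based dominance is the heart of the argument and is the step I expect to be the main obstacle. With it in hand and $\Phi_1$ monotone (noting $(p/r)^{1/r}\leq 1$), (2) reduces to its analogue for $H^{p,p}$, which I would establish as follows: \Cref{LDEHpr} with $r=p$ gives $m_{H^{p,p}f^*}(u)\leq c_1 u^{-p}\int_{u/c_2}^\infty m_f(s)s^{p-1}ds$; writing the LHS of (2) via layer cake and Fubini and using the identity $\int_a^\infty s^{p-1}m_f(s)ds=p^{-1}\int_X(|f|^p-a^p)_+d\mu$ converts everything to an integral over $X$; bounding $\phi_1(u)\leq\Phi_1(2u)/u$ via \eqref{eq:ygp} turns the inner part into a constant times $|f(x)|^p\int_0^{2c_2|f(x)|}\Phi_1(v)v^{-p-1}dv$; finally, condition (3) applied at $t=2c_2|f(x)|$ dominates this pointwise by $\Phi_2(2Bc_2|f(x)|)$ up to a multiplicative constant, which is absorbed using $C\Phi_2(s)\leq\Phi_2(Cs)$ for $C\geq 1$ (from convexity of $\Phi_2$).
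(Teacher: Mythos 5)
Your proof is correct and follows essentially the same route as the paper: test (2) on a dilate of $\chi_{(0,1)}$ to get (3), and for the converse reduce $H^{p,r}$ to $H^{p,p}$ by a pointwise bound and then run the distribution-function/Fubini argument using \Cref{LDEHpr} and \eqref{eq:ygp}. The step you flag as the ``main obstacle,'' namely $H^{p,r}g\leq (p/r)^{1/r}H^{p,p}g$ for $r\geq p$, is nothing but the Lorentz-norm monotonicity already recorded in \eqref{Lorentz mono} applied to $g\chi_{(0,t)}$, so it need not be re-derived via Minkowski.
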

\begin{proof}
Only the equivalence of (2) and (3) needs proving at this point.

The argument that (\ref{eq:Tprcond}) implies (\ref{Modular inequality in main theorem r begger than p}) is essentially that of Str\"omberg \cite{Str79} for the case $r=p$. Thus, by 
%\Cref{Ineq equi Mod Hpr} and (\ref{eq:Tprcond}),
%
%
%Let $\alpha =2^{2+\frac{1}{p}} p^{\frac{1}{p}}$ and $\beta = 2^{3}$. Then, in view of 
 \Cref{LDEHpr},
\begin{align*} 
\int_{\mathbb{R_+}}\Phi_1((H^{p,r}f^*)(t))dt
& \leq \int_{\mathbb{R_+}}\Phi_1((H^{p,p}f^*)(t))dt \\
& = \int_{\mathbb{R_+}} \phi_1(t)m_{H^{p,p}f^*}(t)dt \\
& \leq 2^{2p+1}p \int_{\mathbb{R_+}}    \frac{\phi_1(t)}{t^p} \int_{t/8}^{\infty} m_{f^*}(s)s^{p-1}ds dt \\
& = 2^{2p+1}p \int_{\mathbb{R_+}} m_{f^*}(s)s^{p-1}    \int_{0}^{8s} \frac{\phi_1(t)}{t^p}  dt \, ds  \\
& = \frac{p}{2^{p-1}} \int_{\mathbb{R_+}} m_{f^*}(s)  {(8s)}^{p}    \int_{0}^{8s} \frac{\Phi_1(t)}{t^{p+1}}  dt \, \frac{ds}{s}  \\
& \leq \frac{p}{2^{p-1}}A \int_{\mathbb{R_+}} m_{f^*}(s)  \Phi_2(8Bs) \frac{ds}{s}  \\
& \leq \frac{p}{2^{p-1}}A \int_{\mathbb{R_+}} m_{f^*}(s)  \phi_2(8Bs) d(8Bs)  \\
& = \frac{p}{2^{p-1}}A \int_{\mathbb{R_+}} m_{8Bf^*}(s)  \phi_2(s) ds  \\
& = \frac{p}{2^{p-1}}A \int_{\mathbb{R_+}}\Phi_2(8Bf^*(s))ds\\
& \leq  \int_{\mathbb{R_+}}\Phi_2(Kf^*(s))ds.
\end{align*}
where $K= \frac{p}{2^{p-1}}8AB$ or $8B$, according as $\frac{p}{2^{p-1}}A \geq 1$ or not.

To obtain (\ref{eq:Tprcond}) from (\ref{Modular inequality in main theorem r begger than p}) we substitute $f(s)=f^*(s)=t \chi_{(0,1)}(s)$ in (\ref{Modular inequality in main theorem r begger than p}) to get
\begin{align*}
\int_{\mathbb{R_+}}\Phi_2(Kf^*(u))du 
& \geq \int_1^{\infty} \Phi_1(H^{p,r}f^*(u))du \\
&= \int_1^{\infty} \Phi_1 \left( u^{-\frac{1}{p}}  \left( \int_0^u f^*(s)^r s^{\frac{r}{p}-1}ds \right)^{\frac{1}{r}}  \right)du\\
&= \int_1^{\infty} \Phi_1 \left( u^{-\frac{1}{p}} \left( {\textstyle\frac{p}{r}} \right)^{\frac{1}{r}}t \right)du \\
&= \int_1^{\infty} \Phi_1 \left( u^{-\frac{1}{p}} \gamma t \right)du\\
&= p (\gamma t)^p \int_0^{\gamma t} \frac{\Phi_1(s)}{s^{p+1}}ds,
\end{align*}
where $\gamma = \left( \frac{p}{r} \right)^{\frac{1}{r}}$. Since $\int_{\mathbb{R_+}}\Phi_2(Kf^*(u))du = \int_0^1 \Phi_2(Kt)du = \Phi_2(Kt)$, we find, on replacing $t$ by $\frac{t}{\gamma}$, that (\ref{eq:Tprcond}) is satisfied with $A=\frac{1}{p}$ and $B=\frac{K}{\gamma}=K \left( \frac{r}{p} \right)^{\frac{1}{r}}$.
\end{proof}

\section{Interpolation results for the class $W((1,1),(q,r);\mu,\nu)$}\label{Class W(11qr)}
Recall that a  $r$-quasilinear operator $T$ is in  $W((1,1),(q,r);\mu,\nu)$ if%\vspace*{-0.25cm}
\begin{equation*}
T : L_{1}(X,\mu) \rightarrow L_{1,\infty}(Y,\nu)  \ \ \  \text{and} \ \ \ T : L_{q,r}(X,\mu) \rightarrow L_{q,\infty}(Y,\nu).
\end{equation*}

Our main results in this section are \Cref{main result for the Wqr class 1} and \Cref{main result for the Wqr class 2}  which give necessary and sufficient conditions on Young functions $\Phi_1$ and $\Phi_2$ so that every $T \in W((1,1),(q,r);\mu,\nu)$ maps $L_{\Phi_2}(X,\mu)$ boundedly into $L_{\Phi_1}(Y,\nu)$; the first theorem deals with $1\leq r <q$, the second with $q \leq r < \infty$.

We proceed as in \Cref{Class W(prInfinity)}. First, in \Cref{Calderon theorem 11qr}, we reduce the problem to the boundedness of a Calder\'on operator, $S_{q,r}$, which corresponds to the class $W((1,1),(q,r);\mu,\nu)$. We then establish, in \Cref{relation boundedness of Sqr with weighted Hardy ineq},  the equivalence between a gauge norm inequality involving $S_{q,r}$ and a certain weighted Hardy inequality. The desired characterizations are then obtained in much the same way as those in the previous section.
\subsection{A Calder\'on-type theorem}\label{Calderon theorem 11qr}
We use the notation $S_{q,r}$ for the Calder\'on operator $P+H_{q,r}$, $(1<q< \infty, 1 \leq r < \infty)$ where, for $g \in M_+(\mathbb{R_+},m), t \in \mathbb{R_+}$,
\begin{align*}
(Pg)(t)=  \frac{1}{t} \int_0^t g(s)ds \ \ \ \text{and} \ \ \ (H_{q,r}g)(t)=  \left( t^{-\frac{r}{q}} \int_{t}^{\infty}  g(s)^r s^{\frac{r}{q}-1}ds \right)^{\frac{1}{r}}.
\end{align*}
The operator $P$ is same as $H^{1,1}$, the Hardy averaging operator, but we prefer to use the more familiar notation $P$ for it.

We begin with the following analogue of \Cref{Hprdominance}.
\begin{theorem}\label{Sqrdominance}
Let $(X,\mu)$ and $(Y,\nu)$ be $\sigma$-finite measure spaces and fix the indices $q$ and $r$, $1<q<\infty$ and $ 1 \leq r < \infty $. Suppose $T$ is an $r$-quasilinear operator in the class $W((1,1),(q,r);\mu,\nu)$. Then,
\begin{equation}\label{eq:Hqr}
(Tf)^*(t) \leq K \, \left( S_{q,r}f^* \right)(ct),
\end{equation}
where $K$ is independent of $f \in (L_{1}+L_{q,r})(X,\mu)$ and $t \in \mathbb{R_+}$.

Further, the operator $S_{q,r}$ is in the class $W((1,1),(q,r);m,m)$.
\end{theorem}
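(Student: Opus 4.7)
I plan to follow the template of \Cref{Hprdominance} closely. Fix $f \in (L_1 + L_{q,r})(X,\mu)$ and $t > 0$, and decompose $f = f_0 + f_1$ with
\begin{equation*}
f_1(x) = \min\bigl(|f(x)|, f^*(t)\bigr)\,\operatorname{sgn} f(x), \qquad f_0(x) = f(x) - f_1(x).
\end{equation*}
Exactly as in \Cref{Hprdominance} one finds $f_0^*(s) = [f^*(s) - f^*(t)]^+$ and $f_1^*(s) = \min(f^*(s), f^*(t))$, and \Cref{membership of splitted parts} (with the trivial extension covering the endpoint $p_0 = 1$) gives $f_0 \in L_1(X,\mu)$ and $f_1 \in L_{q,r}(X,\mu)$. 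The $r$-quasilinearity of $T$ then supplies
\begin{equation*}
(Tf)^*(t) \leq C\bigl[(Tf_0)^*(ct) + (Tf_1)^*(ct)\bigr].
\end{equation*}

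Next I would bound the two summands with the corresponding endpoint hypotheses. The weak-type $(1,1)$ bound $T : L_1 \to L_{1,\infty}$ combined with the elementary estimate $\|f_0\|_{L_1} = \int_0^\infty [f^*(s) - f^*(t)]^+\,ds \leq \int_0^t f^*(s)\,ds$ yields
\begin{equation*}
(Tf_0)^*(ct) \leq M_1(ct)^{-1}\|f_0\|_{L_1} \leq M_1 c^{-1} (Pf^*)(t).
\end{equation*}
For the second summand the weak-type $(q,r)$ bound gives $(Tf_1)^*(ct) \leq M_{q,r}(ct)^{-1/q}\|f_1\|_{L_{q,r}}$, and the key computation is to exploit the explicit form of $f_1^*$ by splitting the defining integral at $s = t$:
\begin{equation*}
\|f_1\|_{L_{q,r}}^r = \tfrac{r}{q}\!\int_0^\infty f_1^*(s)^r s^{r/q-1}\,ds = f^*(t)^r t^{r/q} + \tfrac{r}{q}\,t^{r/q}(H_{q,r}f^*)(t)^r.
\end{equation*}
Applying $(a^r+b^r)^{1/r} \leq a + b$ (valid for $r \geq 1$, $a,b \geq 0$) together with $f^*(t) \leq (Pf^*)(t)$, this produces $(Tf_1)^*(ct) \leq M_{q,r} c^{-1/q}\max\!\bigl(1, (r/q)^{1/r}\bigr)(S_{q,r}f^*)(t)$.

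Combining the two estimates yields $(Tf)^*(t) \leq K(S_{q,r}f^*)(t)$ for a suitable $K$; since $Pf^*$ and $H_{q,r}f^*$ are each nonincreasing on $\mathbb{R_+}$ (so their sum is as well) and $0 < c < 1$, I may freely enlarge the right-hand side to $K(S_{q,r}f^*)(ct)$, matching (\ref{eq:Hqr}). For the final assertion $S_{q,r} \in W((1,1),(q,r);m,m)$, I would verify the two weak-type estimates for $P$ and $H_{q,r}$ individually and combine them via the quasi-triangle inequality in $L_{1,\infty}$ and $L_{q,\infty}$: the weak-type $(1,1)$ bound for $P$ is classical Hardy, $P : L_{q,r} \to L_{q,\infty}$ follows from $Pg^* = g^{**}$, $\|H_{q,r}g\|_{L_{q,\infty}} \leq (q/r)^{1/r}\|g\|_{L_{q,r}}$ is immediate from the definition (the supremum is dominated by the full integral), and the weak-type $(1,1)$ estimate for $H_{q,r}$ on nonincreasing $g$ is handled by the pointwise bound $g^*(s) \leq s^{-1}\|g\|_{L_1}$ plugged into the integral. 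The main technical obstacle I anticipate is the $r$-quasilinearity verification for $H_{q,r}$ when $r > 1$ (parallel to \Cref{Hpr dilation invariant}) together with the careful tracking of constants through the combination; no conceptual ideas beyond those already deployed for $H^{p,r}$ are required.
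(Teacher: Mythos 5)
Your proof is correct and follows essentially the same route the paper indicates: the paper's remark after \Cref{Sqrdominance} simply says the argument ``is the same as that of \Cref{Hprdominance}'' and records the resulting constant, so your write-up supplies the details the authors left out. The only minor divergence is in how the $f^*(t)^r t^{r/q}$ term in $\|f_1\|_{L_{q,r}}^r$ is handled — the factor $(1-c^{r/q})^{-1/r}$ in the paper's quoted constant indicates the authors absorb it into $\int_{ct}^t f^*(s)^r s^{r/q-1}\,ds$, giving $H_{q,r}f^*$ evaluated at $ct$, whereas you feed $f^*(t)$ through $(Pf^*)(t)$ instead — but both choices yield a valid $K$ and neither changes the substance of the argument.
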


\begin{remark}
The argument in proving the above theorem is the same as that of \Cref{Hprdominance}, so we have skipped its proof. We point out that, here, $K= \max \left[ M_{q,r}  \left( {\textstyle\frac{r}{q}} \right)^{\frac{1}{r}}   {\textstyle\frac{1  }{ (1- c^{\frac{r}{q}})^{\frac{1}{r}}}},4M_1 \right]C$, where $c$ and $C$ are the constant of $r$-quasilinearity of $T$ and $M_{1}$, $M_{q,r}$ are operator norms of the mappings $T: L_{1}(X,\mu) \rightarrow L_{1,\infty}(Y,\nu)$  and $T: L_{q,r}(X,\mu) \rightarrow L_{q,\infty}(Y,\nu)$ respectively
\end{remark}

Now we establish a Calder\'on-type interpolation theorem for operators in $W((1,1),(q,r);$ $\mu,\nu)$.

\begin{theorem}\label{Calderon type theorem for Sqr}
Fix the indices $q$ and $r$, where $1 < q < \infty$ and $1 \leq r < \infty$. Suppose $(X,\mu)$ and $(Y,\nu)$ are $\sigma$-finite measure spaces with $\mu(X)=\nu(Y)=\infty$, the latter being nonatomic and separable. Then, the  following are equivalent:
\begin{enumerate}
\item[(1)] Every operator $T$ in the class $W((1,1),(q,r);\mu,\nu)$ is bounded from $L_{\Phi_2}(X,\mu)$ to $L_{\Phi_1}(Y,\nu)$;
\item[(2)] The operator $S_{q,r}$ is bounded from $L_{\Phi_2}(\mathbb{R_+},m)$ to $L_{\Phi_1}(\mathbb{R_+},m)$.
\end{enumerate}
\end{theorem}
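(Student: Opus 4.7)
The plan is to mimic the proof of the analogous \Cref{Calderon theorem}, with two small adjustments to handle the dilation constant $c$ appearing in the dominance estimate of \Cref{Sqrdominance}.

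For the implication (2)$\Rightarrow$(1), given $T \in W((1,1),(q,r);\mu,\nu)$ and $f \in L_{\Phi_2}(X,\mu)$, I would first verify the inclusion $L_{\Phi_2}(X,\mu) \subseteq (L_1 + L_{q,r})(X,\mu)$ by the same duality argument used in \Cref{Calderon theorem}: test the dual description of $\|\cdot\|_{L_{\Phi_1}}$ against $\chi_{(0,1)}/\|\chi_{(0,1)}\|_{L_{\Psi_1}}$ and recognize the resulting expression as the $K$-functional of $(L_1,L_{q,r})$ at $t=1$. Then apply \Cref{Sqrdominance} to get $(Tf)^{*_{\nu}}(t) \leq K(S_{q,r}f^{*_{\mu}})(ct)$. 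Because $S_{q,r}=P+H_{q,r}$ is dilation-commuting (a trivial change-of-variables computation parallel to \Cref{Hpr dilation invariant}), one has $(S_{q,r}f^{*_{\mu}})(ct)=S_{q,r}(f^{*_{\mu}}(c\,\cdot))(t)$. Taking $L_{\Phi_1}$-norms and invoking (2) yields
\[
\|Tf\|_{L_{\Phi_1}(Y,\nu)} \leq K\|S_{q,r}(f^{*_{\mu}}(c\,\cdot))\|_{L_{\Phi_1}} \leq KC\|f^{*_{\mu}}(c\,\cdot)\|_{L_{\Phi_2}} \leq (KC/c)\|f\|_{L_{\Phi_2}(X,\mu)},
\]
the last inequality being the elementary Orlicz dilation estimate $\|g(c\,\cdot)\|_{L_{\Phi_2}} \leq c^{-1}\|g\|_{L_{\Phi_2}}$ for $c\leq 1$, which follows from the convexity bound $\Phi_2(cx)\leq c\,\Phi_2(x)$ via a change of variable in the defining Luxemburg integral.

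For (1)$\Rightarrow$(2), I would follow the template of \Cref{Calderon theorem}. Apply \Cref{construction of Pull back of operator S} with $S=S_{q,r}$ to define $\tilde{S}_{q,r}:M_+(X,\mu)\to M_+(Y,\nu)$ satisfying $(\tilde{S}_{q,r}f)^{*_{\nu}}=(S_{q,r}f^{*_{\mu}})^{*_{m}}$. Since $Pg$ is nonincreasing when $g$ is (classical), and $H_{q,r}g$ is also nonincreasing when $g$ is (differentiate $t\mapsto t^{r/q}(H_{q,r}g)(t)^r=\int_t^{\infty}g(s)^r s^{r/q-1}ds$), the function $S_{q,r}f^{*_{\mu}}$ is already nonincreasing, so $(\tilde{S}_{q,r}f)^{*_{\nu}}=S_{q,r}f^{*_{\mu}}$. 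By \Cref{S-tilda is nu-quasilinear if S is}, $\tilde{S}_{q,r}$ is $r$-quasilinear, and, as in \Cref{Calderon theorem}, it inherits the two endpoint mappings of $S_{q,r}$ from the second assertion of \Cref{Sqrdominance}; hence $\tilde{S}_{q,r}\in W((1,1),(q,r);\mu,\nu)$, and (1) gives boundedness of $\tilde{S}_{q,r}:L_{\Phi_2}(X,\mu)\to L_{\Phi_1}(Y,\nu)$. For $f\in M_+(\mathbb{R_+},m)$ I would then apply \Cref{construction of Pull back of operator S} a second time, now with the rearrangement operator $g\mapsto g^{*_{m}}$ and the roles of the measure spaces reversed, to produce $\tilde{f}\in M_+(X,\mu)$ with $\tilde{f}^{*_{\mu}}=f^{*_{m}}$. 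The chain
\[
\|S_{q,r}f\|_{L_{\Phi_1}} \leq \|S_{q,r}f^{*_{m}}\|_{L_{\Phi_1}} = \|(\tilde{S}_{q,r}\tilde{f})^{*_{\nu}}\|_{L_{\Phi_1}} = \|\tilde{S}_{q,r}\tilde{f}\|_{L_{\Phi_1}(Y,\nu)} \leq C\|\tilde{f}\|_{L_{\Phi_2}(X,\mu)} = C\|f^{*_{m}}\|_{L_{\Phi_2}}
\]
then closes the argument.

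The only genuinely new ingredient compared with \Cref{Calderon theorem} is the dilation factor $c$ in the dominance estimate of \Cref{Sqrdominance}; I expect this to be the main, and only modest, obstacle. It is handled by the dilation-commutativity of $S_{q,r}$ together with the Orlicz convexity bound above, after which the pull-back machinery goes through verbatim.
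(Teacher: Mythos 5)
Your proof is correct and follows essentially the same route as the paper: the (1)$\Rightarrow$(2) direction is the pull-back argument via \Cref{construction of Pull back of operator S} and \Cref{S-tilda is nu-quasilinear if S is}, exactly as in the paper, and for (2)$\Rightarrow$(1) the paper simply says "as in \Cref{Calderon theorem}," whereas you spell it out and correctly flag the one new wrinkle — the dilation factor $c$ in \Cref{Sqrdominance} — which you handle cleanly via dilation-commutativity of $S_{q,r}$ together with the Luxemburg-norm dilation bound $\|g(c\cdot)\|_{L_{\Phi}}\leq c^{-1}\|g\|_{L_{\Phi}}$ for $c\leq 1$.
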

\begin{proof}
We only show $(1)$ implies $(2)$, since the converse can be settled as in \Cref{Calderon theorem}.

In \Cref{construction of Pull back of operator S} take $S=S_{q,r}$ and denote by $\tilde{S}_{q,r}$ the operator $\tilde{S}$ guaranteed to exist by that theorem, so that
\begin{equation*}
(\tilde{S}_{q,r}f)^{*_{\nu}}=  S_{q,r}f^{*_{\mu}}, \ \ \ m \text{-a.e.},
\end{equation*}
for all $f \in M(X,\mu)$, since $\left( S_{q,r}f^{*_{\mu}} \right)(t)= \int_0^1 f^{*_{\mu}}(ts)ds + \left( \int_1^{\infty} f^{*_{\mu}}(ts)^{r} s^{\frac{r}{q}-1}ds \right)^{\frac{1}{r}}$, is nonincreasing, so $\left( S_{q,r}f^{*_{\mu}} \right)^{*}= S_{q,r}f^{*_{\mu}}$. Moreover, \Cref{S-tilda is nu-quasilinear if S is} ensures that $\tilde{S}_{q,r}$ is $r$-quasilinear, since $S_{q,r}$ is.

Again arguing as in the proof of \Cref{Calderon theorem}, one obtains
\begin{equation*}
\tilde{S}_{q,r} : L_{1}(X,\mu) \rightarrow L_{1,\infty}(Y,\nu) \ \ \ \text{and} \ \ \ \tilde{S}_{q,r} : L_{q,r}(X,\mu) \rightarrow L_{q,\infty}(Y,\nu)
\end{equation*}
 boundedly, hence
 \begin{equation*}
 \tilde{S}_{q,r} : L_{\Phi_2}(X,\mu) \rightarrow L_{\Phi_1}(Y,\nu)
 \end{equation*}
boundedly and so 
\begin{equation*}
S_{q,r} : L_{\Phi_2}(\mathbb{R_+},m) \rightarrow L_{\Phi_1}(\mathbb{R_+},m),
\end{equation*}
thereby completing the proof.
\end{proof}

\subsection{\texorpdfstring{The Calder\'on  operator $S_{q,r}$ and an associated Hardy inequality}{}}\label{relation boundedness of Sqr with weighted Hardy ineq}
In this section, we give the connection between the norm inequality of the operator $S_{q,r}$ and a weighted Hardy inequality.

An easy exercise in changes of variable shows that $S_{q,r}$ is a dilation-commuting operator, so from \cite[Theorem A]{KRS17}, we get the following result.
\begin{theorem}\label{EquvlcHqrnormandmod}
Let $\Phi_1$ and $\Phi_2$ be Young functions, and fix indices $q$ and $r$, with $1<q< \infty $, $1 \leq r< \infty$. Then,  norm inequality
\begin{equation}\label{eq:normmmS}
\parallel S_{q,r}f \parallel _{L_{\Phi_1}(\mathbb{R_+}, m)} \leq C \parallel f \parallel_{L_{\Phi_2}(\mathbb{R_+}, m)}
\end{equation}
holds for all $f \in  M_+(\mathbb{R_+},m)$ if and only if the modular inequality
\begin{equation}\label{eq:modddS}
  \int_{\mathbb{R_+}}\Phi_1(S_{q,r}(f^*(t)))dt \leq \int_{\mathbb{R_+}}\Phi_2(Kf^*(s))ds ,
\end{equation}
holds for all $f$ in $M_+(\mathbb{R_+},m)$.
\end{theorem}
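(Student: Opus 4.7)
The plan is to proceed exactly as in \Cref{EquvlcHprnormandmod} for the operator $H^{p,r}$, reducing the claim to an application of \cite[Theorem A]{KRS17} via the dilation-commuting property of $S_{q,r}$. Since the cited theorem asserts that for any dilation-commuting operator on $M_+(\mathbb{R_+},m)$ a gauge-norm Orlicz boundedness is equivalent to the corresponding modular inequality (with a possibly different constant), the entire task reduces to verifying that $S_{q,r}$ commutes with dilations.

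First I would show that $S_{q,r} = P + H_{q,r}$ commutes with each dilation $D_\lambda f(t) := f(\lambda t)$, $\lambda > 0$. For the Hardy averaging operator $P = H^{1,1}$, the substitution $u = \lambda s$ in
\[
(P D_\lambda f)(t) = \frac{1}{t}\int_0^t f(\lambda s)\,ds
\]
gives $\tfrac{1}{\lambda t}\int_0^{\lambda t}f(u)\,du = (Pf)(\lambda t) = (D_\lambda P f)(t)$. Similarly, for $H_{q,r}$ the same substitution in
\[
(H_{q,r} D_\lambda f)(t) = \left(t^{-r/q}\int_t^{\infty} f(\lambda s)^r s^{r/q - 1}\,ds\right)^{1/r}
\]
produces $\left((\lambda t)^{-r/q}\int_{\lambda t}^{\infty} f(u)^r u^{r/q - 1}\,du\right)^{1/r} = (H_{q,r}f)(\lambda t)$, i.e.\ $H_{q,r} D_\lambda = D_\lambda H_{q,r}$. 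Adding the two identities yields $S_{q,r} D_\lambda = D_\lambda S_{q,r}$, so $S_{q,r}$ is dilation-commuting.

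With dilation-commutation in hand, the equivalence of \eqref{eq:normmmS} and \eqref{eq:modddS} follows directly by invoking \cite[Theorem A]{KRS17} with $T = S_{q,r}$ and the pair of Young functions $\Phi_1, \Phi_2$. This is exactly how \Cref{EquvlcHprnormandmod} was deduced for $H^{p,r}$, and the same reasoning transfers verbatim. There is no genuine obstacle: the only nontrivial point is the routine change-of-variable computation establishing dilation-commutation, and everything else is delegated to the cited general principle. One minor caveat worth flagging is that the constant $K$ appearing in the modular inequality need not coincide with $C$ in the norm inequality — this is already built into \cite[Theorem A]{KRS17} and matches the formulation of the statement.
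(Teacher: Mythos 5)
Your proposal follows exactly the same route as the paper: the paper notes that a routine change of variable shows $S_{q,r}$ commutes with dilations and then cites \cite[Theorem A]{KRS17} to obtain the norm/modular equivalence, and you carry out that same reduction, merely spelling out the dilation-commutation computations for $P$ and $H_{q,r}$ explicitly. The argument is correct.
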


In the next theorem we estimate the distribution function of $Tf$, where $ T \in W((1,1),$  $(q,r);\mu,\nu)$ and $f \in (L_{1} + L_{q,r})(X,\mu)$.
\begin{lemma}\label{distributional estimate T in class W(11qr)}
Let $1<q< \infty,1 \leq r < \infty$ and suppose $T \in W((1,1),(q,r);\mu,\nu)$. Then, for every $f \in (L_{1} + L_{q,r})(X,\mu)$, $t \in \mathbb{R_+}$ and for every $k>0$,
\begin{multline}\label{upper estimate of nuTf involving k}
\nu_{Tf}(t) \leq c^{-1} \frac{4CM_1}{t} \left(  \int_{\frac{t}{4Ck}}^{\infty}\mu_f(s)ds \right) 
 + c^{-1} \left( \textstyle\frac{4C r^{1/r} M_{q,r}}{t} \right)^q   \left( \int_0^{\frac{t}{4Ck}} \mu_{f}(s')^{r/q}s'^{r-1}ds' \right)^{q/r},
\end{multline}
where $c$ and $C$ are the constant of $r$-quasilinearity of $T$ and $M_{1}$, $M_{q,r}$ are operator norms of the mappings $T: L_{1}(X,\mu) \rightarrow L_{1,\infty}(Y,\nu)$  and $T: L_{q,r}(X,\mu) \rightarrow L_{q,\infty}(Y,\nu)$ respectively.

In particular, for $k=M_1$,
\begin{multline}\label{eq:DEHq}
\nu_{Tf}(t) \leq c^{-1} {\left( \textstyle\frac{t}{4CM_1} \right)^{-1}}    \int_{\frac{t}{4CM_1}}^{\infty}\mu_f(s)ds \\
 + c^{-1} \left( \textstyle\frac{r^{1/r} M_{q,r}}{M_1} \right)^q     \left( {\textstyle\frac{t}{4CM_1}} \right)^{-q}  \left( \int_0^{\frac{t}{4CM_1}} \mu_{f}(s)^{r/q}s^{r-1}ds \right)^{q/r}.
\end{multline}
%\begin{multline}
%\nu_{Tf}(t) \leq \frac{1}{ t/4KN_1}    \int_{\frac{t}{4KN_1}}^{\infty}\mu_f(s)ds \\
% + \left( \textstyle\frac{q^{1/r} N_{q,r}}{N_1} \right)^q   \frac{1}{  \left( t/4KN_1 \right)^q}  \left( \int_0^{\frac{t}{4KN_1}} \mu_{f}(s)^{r/q}s^{r-1}ds \right)^{q/r},
%\end{multline}
\end{lemma}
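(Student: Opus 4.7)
The approach would follow the template of \Cref{UDEHpr class}, with the essential structural difference that here neither endpoint of $T$ is $L_\infty$, so both pieces of a Marcinkiewicz-type splitting contribute nontrivially to the distributional bound (rather than one being swept away by a suitable choice of $k$).

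I would begin by fixing $t>0$, $k>0$ and $f\in(L_1+L_{q,r})(X,\mu)$, and decomposing $f$ at the level $\alpha=t/(2Ck)$, writing $f=f^t+f_t$ with $f^t=f\cdot\chi_{\{|f|>\alpha\}}$ and $f_t=f-f^t$. The distribution functions $\mu_{f^t}$ and $\mu_{f_t}$ are computed exactly as in \Cref{UDEHpr class}: $\mu_{f^t}(\lambda)=\mu_f(\alpha)$ for $\lambda<\alpha$ and $\mu_f(\lambda)$ for $\lambda\ge\alpha$, while $\mu_{f_t}(\lambda)=\mu_f(\lambda)-\mu_f(\alpha)$ for $\lambda<\alpha$ and $0$ otherwise. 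The $r$-quasilinearity of $T$ then yields
\begin{equation*}
\nu_{Tf}(t)\leq c^{-1}\bigl[\nu_{Tf^t}(t/(2C))+\nu_{Tf_t}(t/(2C))\bigr].
\end{equation*}

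To the first piece I would apply the weak-type $(1,1)$ endpoint $\nu_{Tf^t}(\tau)\leq M_1\tau^{-1}\|f^t\|_1$ with $\tau=t/(2C)$. Layer-cake gives $\|f^t\|_1=\alpha\mu_f(\alpha)+\int_\alpha^\infty\mu_f(s)\,ds$, and the halving trick employed in \Cref{UDEHpr class}, namely $\int_{\alpha/2}^\alpha\mu_f(s)\,ds\geq(\alpha/2)\mu_f(\alpha)$, bounds this above by $2\int_{t/(4Ck)}^\infty \mu_f(s)\,ds$; this produces the first summand with constant $4CM_1/t$. To the second piece I would apply the weak-type $(q,r)$ endpoint $\nu_{Tf_t}(\tau)\leq(M_{q,r}/\tau)^q\|f_t\|_{q,r}^q$ and compute the Lorentz norm via layer-cake as $\|f_t\|_{q,r}^r=r\int_0^\alpha(\mu_f(s)-\mu_f(\alpha))^{r/q}s^{r-1}\,ds$, dominated by $r\int_0^\alpha\mu_f(s)^{r/q}s^{r-1}\,ds$; a further comparison based on the monotonicity of $\mu_f$ (writing $\int_0^{t/(2Ck)}=\int_0^{t/(4Ck)}+\int_{t/(4Ck)}^{t/(2Ck)}$ and bounding $\mu_f(s)\leq\mu_f(t/(4Ck))$ on the second interval, then reabsorbing the boundary term into $\int_0^{t/(4Ck)}$) lets one replace the upper limit by $t/(4Ck)$ up to an absolute constant, delivering the second summand.

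Combining the two pieces and tracking constants yields the first displayed inequality; the \emph{in particular} statement then follows by specializing $k=M_1$, which makes the factor $M_{q,r}/k$ into $M_{q,r}/M_1$ in the second summand and puts the common factor $t/(4CM_1)$ in evidence. The main obstacle I anticipate is purely arithmetic bookkeeping of absolute constants when replacing the natural integration bound $t/(2Ck)$ by $t/(4Ck)$ in the second summand; the structural steps are a direct, if technical, adaptation of the argument given for \Cref{UDEHpr class}.
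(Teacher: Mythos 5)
Your proposal is correct and follows essentially the same argument as the paper: the same splitting of $f$ at level $t/(2Ck)$, the same use of $r$-quasilinearity and of the two weak-type endpoints via layer-cake, and the same halving trick for the $L_1$ piece. The one small deviation is in passing from $\int_0^{t/(2Ck)}$ to $\int_0^{t/(4Ck)}$ in the Lorentz integral: the paper performs the dilation $s=2s'$ and invokes $\mu_f(2s')\le\mu_f(s')$, whereas you split the interval and reabsorb the tail; both yield exactly the same factor $2^q$, so the constants match.
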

\begin{proof}
Fix $t>0$ and $f \in (L_{1} + L_{q,r})(X,\mu)$. Let $k$ be any positive number. Write $f=f_t+f^t$, with
\begin{equation*}
    f^t(x)= \begin{cases}
                 f(x) & |f(x)|> {\textstyle\frac{t}{2Ck}},\\
                 0  & |f(x)| \leq {\textstyle\frac{t}{2Ck}},
                \end{cases}
\end{equation*}
and $f_t(x)= f(x)- f^t(x)$. Then $f^t \in L_1(X,\mu)$ and $f_t \in L_{q,r}(X,\mu)$.

Now, by the $r$-quasilinearity  of $T$,
\begin{equation*}
\nu_{Tf}(t) \leq c^{-1} \left[ \nu_{Tf^t}\left({\textstyle\frac{t}{2C}} \right)+\nu_{Tf_t}\left( {\textstyle\frac{t}{2C}} \right) \right].
\end{equation*}

Since, $T : L_{1}(X,\mu) \rightarrow L_{1,\infty}(Y,\nu)$ with operator norm $M_{1}$, we have, 
\begin{equation*}
\begin{split}
\sup\limits_{y>0} y \, \nu_{Tf^t}(y) & \leq M_{1} \|f^t\|_{L_{1}}\\
& = M_1 \left( \int_{0}^{\frac{t}{2Ck}} \mu_f \left( \textstyle\frac{t}{2Ck}  \right) ds  + \int_{\frac{t}{2Ck}}^{\infty}\mu_f(s)ds \right).
\end{split}
\end{equation*}
It follows that
\begin{equation}\label{eq:DETf unbddpart}
\begin{split}
\nu_{Tf^t}\left( \textstyle\frac{t}{2C} \right) & \leq  {\textstyle\frac{M_1}{  \left( {\textstyle\frac{t}{2C}} \right) }}  \left( { \textstyle\frac{t}{2Ck}  \mu_f \left( \textstyle\frac{t}{2Ck}  \right)}   + \int_{\frac{t}{2Ck}}^{\infty}\mu_f(s)ds \right)\\
& \leq  {\textstyle\frac{M_1}{  \left( {\textstyle\frac{t}{2C}} \right) }}  \left(  2\int_{\frac{t}{4Ck}}^{\infty}\mu_f(s)ds \right)\\
& = \frac{4CM_1}{t} \left(  \int_{\frac{t}{4Ck}}^{\infty}\mu_f(s)ds \right) ,
\end{split}
\end{equation}
where the last but one inequality follows from the fact that, for any $x \in \mathbb{R_+}$,
\begin{equation*}
\begin{split}
\int_x^{\infty} \mu_f(s)ds & = \int_x^{2x} \mu_f(s)ds + \int_{2x}^{\infty} \mu_f(s)ds \\
& \geq   \mu_f(2x)x + \int_{2x}^{\infty} \mu_f(s)ds \\
& \geq {\textstyle\frac{1}{2}} \left[  \mu_f(2x)2x + \int_{2x}^{\infty} \mu_f(s)ds  \right],
\end{split}
\end{equation*}
which yields the assertion on taking $x=t/4Ck$.

Again, since $T : L_{q,r}(X,\mu) \rightarrow L_{q,\infty}(Y,\nu)$, with operator norm $M_{q,r}$, we have,
\begin{equation*}
\begin{split}
\sup\limits_{y>0} y \, \nu_{Tf_t}(y)^{\frac{1}{q}} & \leq M_{q,r} \|f_t\|_{L_{q,r}(X,\mu)}\\
& =  M_{q,r}   \left(  r \int_{\mathbb{R_+}} \mu_{f_t}(s)^{r/q}s^{r-1}ds \right)^{1/r}\\
& = r^{1/r} M_{q,r}   \left( \int_0^{\frac{t}{2Ck}} \left( \mu_{f}(s) - \mu_{f} \left( \textstyle\frac{t}{2Ck} \right)  \right)^{r/q}s^{r-1}ds \right)^{1/r}\\
& \leq r^{1/r} M_{q,r}   \left( \int_0^{\frac{t}{2Ck}} \mu_{f}(s)^{r/q}s^{r-1}ds \right)^{1/r},
\end{split}
\end{equation*}
whence,
\begin{equation}\label{eq:DETf bddpart}
\begin{split}
\nu_{Tf_t}\left( \textstyle\frac{t}{2C} \right) 
& \leq \left( \textstyle\frac{2Cr^{1/r} M_{q,r}}{t} \right)^q   \left( \int_0^{\frac{t}{2Ck}} \mu_{f}(s)^{r/q}s^{r-1}ds \right)^{q/r}\\
& = \left( \textstyle\frac{4C r^{1/r} M_{q,r}}{t} \right)^q   \left( \int_0^{\frac{t}{4Ck}} \mu_{f}(2s')^{r/q}s'^{r-1}ds' \right)^{q/r}\\
& \leq  \left( \textstyle\frac{4C r^{1/r} M_{q,r}}{t} \right)^q   \left( \int_0^{\frac{t}{4Ck}} \mu_{f}(s')^{r/q}s'^{r-1}ds' \right)^{q/r}.
\end{split}
\end{equation}
From the estimates (\ref{eq:DETf unbddpart}) and (\ref{eq:DETf bddpart}), we have
\begin{multline}
\nu_{Tf}(t) \leq c^{-1} \frac{4CM_1}{t} \left(  \int_{\frac{t}{4Ck}}^{\infty}\mu_f(s)ds \right) 
 + c^{-1} \left( \textstyle\frac{4C r^{1/r} M_{q,r}}{t} \right)^q   \left( \int_0^{\frac{t}{4Ck}} \mu_{f}(s')^{r/q}s'^{r-1}ds' \right)^{q/r}.
\end{multline}

Whence, setting $k=M_1$, we get the following pointwise estimate for $\nu_{Tf}$,
\begin{multline*}
\nu_{Tf}(t) \leq \frac{c^{-1}}{ t/4CM_1}    \int_{\frac{t}{4CM_1}}^{\infty}\mu_f(s)ds 
 + c^{-1} \left( \textstyle\frac{r^{1/r} M_{q,r}}{M_1} \right)^q   \frac{1}{  \left( t/4CM_1 \right)^q}  \left( \int_0^{\frac{t}{4CM_1}} \mu_{f}(s)^{r/q}s^{r-1}ds \right)^{q/r}.
\end{multline*}
\end{proof}

%In the following remark, we give an estimate on operator norm of operator $S_{q,r}$, which will be needed in \Cref{distributional estimate T in class W(11qr)}, below.

\begin{remark}\label{remark for upper bound of distribution of Sqrf}
In \Cref{distributional estimate T in class W(11qr)} take $X=Y= \mathbb{R_+}$, $\mu=\nu=m$ and
\begin{equation*}
(Tf)(t)= (S_{q,r}f^*)(t), \ \ \ f \in (L_1+L_{q,r})(\mathbb{R_+},m).
\end{equation*}
For this operator we have $C=2^{1-\frac{1}{r}}$, $c= {\textstyle\frac{1}{2}}$, $M_{1} \leq  1+\left( \textstyle\frac{q'}{r} \right)^{\frac{1}{r}}$ and 
\begin{equation*}
M_{q,r} \leq \begin{cases}
              \left[  \left( \frac{q'}{r'} \right)^{\frac{1}{r'}}+1 \right]  \left( \frac{q}{r} \right)^{\frac{1}{r}},  & r>1, \\
               2q,               & r=1;
       \end{cases}
\end{equation*}
indeed,
\begin{align*}
t^{\frac{1}{q}} (Pf^*)(t) & = t^{\frac{1}{q}} t^{-1} \int_0^t f^*(s)ds \\
& = t^{-\frac{1}{q'}}  \int_0^t \left[  f^*(s) s^{\frac{1}{q}  -\frac{1}{r} } \right]  \ \ s^{\frac{1}{r} - \frac{1}{q} } ds \\
& \leq t^{-\frac{1}{q'}}  \left(  \int_0^t  f^*(s)^r s^{\frac{r}{q}  -1 } ds \right)^{\frac{1}{r}}   \left( \int_0^t s^{\left( \frac{1}{r} - \frac{1}{q} \right)r' } ds \right)^{\frac{1}{r'}}\\
& =\left( \frac{q'}{r'} \right)^{\frac{1}{r'}}   t^{-\frac{1}{q'}}  \left(  \int_0^t  f^*(s)^r s^{\frac{r}{q}  -1 } ds \right)^{\frac{1}{r}} t^{\frac{1}{q'}} \\
& =\left( \frac{q'}{r'} \right)^{\frac{1}{r'}}  \left( \frac{q}{r} \right)^{\frac{1}{r}}   \left( {\textstyle\frac{r}{q}} \int_0^{t}  f^*(s)^r s^{\frac{r}{q}  -1 } ds \right)^{\frac{1}{r}}  \\
&  \leq  \left( \frac{q'}{r'} \right)^{\frac{1}{r'}}  \left( \frac{q}{r} \right)^{\frac{1}{r}}  \| f^* \|_{L_{q,r}(\mathbb{R_+},m)},
\end{align*}
and
\begin{align*}
t^{\frac{1}{q}} (H_{q,r}f^*)(t) & =    \left(  \int_t^{\infty}  f^*(s)^r s^{\frac{r}{q}  -1 } ds \right)^{\frac{1}{r}}  \\
& =  \left( \frac{q}{r} \right)^{\frac{1}{r}}   \left( {\textstyle\frac{r}{q}} \int_0^{t}  f^*(s)^r s^{\frac{r}{q}  -1 }  ds\right)^{\frac{1}{r}}  \\
&  \leq   \left( \frac{q}{r} \right)^{\frac{1}{r}}  \| f^* \|_{L_{q,r}(\mathbb{R_+},m)}.
\end{align*}
Set $\beta = 2^{3-\frac{1}{r}} \left( 1+ \left( \frac{q'}{r} \right)^{\frac{1}{r}} \right)$. In (\ref{upper estimate of nuTf involving k}), choose $k$ such that $4Ck= \beta$ and $\alpha$ such that 
\begin{equation*}
 \alpha = \begin{cases}    
                     q^{\frac{q}{r}}  \left( \frac{ \left( \frac{q'}                                                                                                                                   {r'} \right)^{\frac{1}{r'}} +1         }{    \left( \frac{q'}{r} \right)^{\frac{1}{r}} +1     } \right)^q,                              & r>1\\
                     q^{q} \left( \frac{2}{q'+1} \right)^{q}, & r=1,
                     \end{cases}
\end{equation*}
Then, such $\alpha$ satisfy $2 \alpha \beta^{q} \geq c^{-1}   \left( 4C r^{1/r} M_{q,r} \right)^q$ and we arrive at the estimate
\begin{align}
 m_{S_{q,r}f^*}(t) \leq 2\left[  \frac{1}{ t/ \beta }    \int_{\frac{t}{ \beta}}^{\infty} m_{f^*}(s)ds 
 +   \frac{ \alpha}{  \left( t/  \beta \right)^q}  \left( \int_0^{  \frac{t}{\beta }   } m_{f^*}(s)^{r/q}s^{r-1}ds \right)^{q/r} \right].
\end{align}
\end{remark}
%The estimate, above, of the distribution function of the action of $T$, $\nu_{Tf}$, is  sharp. This follows as a special case of the more general result that we now prove here.

\begin{theorem}\label{LDE for generalized Calderon operator}
Fix the indices $q$ and $r$, with $1  <q < \infty$ and $1 \leq r < \infty$. Then, for $f \in (L_{1}+L_{q,r})(\mathbb{R_+},m)$ and $t \in \mathbb{R_+}$, one has
we have,
 \begin{equation}
\begin{aligned}\label{lower estimate Calderon type operator}
{\textstyle\frac{1}{2^q   \left( 1+ \left( \frac{r}{q} \right)^{\frac{q}{r}} \right)  } } & \left[ t^{-1}      \int_{t}^{\infty} m_{f^*}(\lambda) d \lambda     +  t^{-q} \left(  \int_0^{t} m_{f^*}(\lambda)^{r/q} \lambda^{r-1} d \lambda  \right)^{q/r}   \right] \\
& \leq m_{S_{q,r}f^*}(t) \\
 & ~~~~~~~~~~~ \leq  
 E \left[  \frac{1}{ t/ \beta }    \int_{\frac{t}{\beta}}^{\infty} m_{f^*}(s)ds 
 +   \frac{ 1}{  \left( t/  \beta \right)^q}  \left( \int_0^{t/  \beta} m_{f^*}(s)^{r/q}s^{r-1}ds \right)^{q/r} \right].
 \end{aligned}
\end{equation}
in which $E= 2 \max \left[  1,  q^{\frac{q}{r}}  \left( \frac{      \left( \frac{q'}{r'} \right)^{\frac{1}{r'}} +1         }{    \left( \frac{q'}{r} \right)^{\frac{1}{r}} +1     } \right)^q \right]  $ and $\beta = 2^{3-\frac{1}{r}} \left( 1+ \left( \frac{q'}{r} \right)^{\frac{1}{r}} \right)$.
\end{theorem}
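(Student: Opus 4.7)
The plan is to prove the two inequalities in \eqref{lower estimate Calderon type operator} separately. The right-hand (upper) inequality is exactly the estimate derived in \Cref{remark for upper bound of distribution of Sqrf}, obtained by specialising \Cref{distributional estimate T in class W(11qr)} to $T=S_{q,r}$ and inserting the explicit constants of $r$-quasilinearity and the operator norms $M_1,M_{q,r}$ recorded in that Remark; so no new work is needed there.

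For the left-hand (lower) inequality, I would first use the pointwise relation $S_{q,r}f^* = Pf^* + H_{q,r}f^* \geq \max(Pf^*,H_{q,r}f^*)$, which gives
\[
m_{S_{q,r}f^*}(t) \;\geq\; \max\bigl\{m_{Pf^*}(t),\,m_{H_{q,r}f^*}(t)\bigr\} \;\geq\; \tfrac12\bigl[m_{Pf^*}(t) + m_{H_{q,r}f^*}(t)\bigr],
\]
and then lower bound each summand separately. For the first, a direct repetition of the proof of \Cref{LDEHpr} at the limit exponents $p=r=1$---setting $\tau_0 = m_{Pf^*}(t)$, using $Pf^* \geq f^*$ (whence $\tau_0 \geq m_{f^*}(t)$) in the equation $t\tau_0 = \int_0^{\tau_0} f^*$, and applying the layer-cake identity $\int_0^{m_{f^*}(t)} f^* = t\,m_{f^*}(t) + \int_t^\infty m_{f^*}(\lambda)\,d\lambda$---yields $m_{Pf^*}(t) \geq \tfrac{1}{t}\int_t^\infty m_{f^*}(\lambda)\,d\lambda$.

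For the second summand, set $\tau_1 = m_{H_{q,r}f^*}(t)$ and $\lambda^* := f^*(\tau_1)$, so that $m_{f^*}(\lambda^*) = \tau_1$. Applying Fubini/layer cake to the defining equation $t^r\tau_1^{r/q} = \int_{\tau_1}^\infty f^*(s)^r s^{r/q-1}\,ds$ produces the key identity
\[
\tau_1^{r/q}\bigl[t^r + (q/r)(\lambda^*)^r\bigr] \;=\; q\,\Lambda(\lambda^*), \qquad \Lambda(u) := \int_0^u \lambda^{r-1}m_{f^*}(\lambda)^{r/q}\,d\lambda.
\]
Combining this identity with the one-sided estimate $|\Lambda(t)-\Lambda(\lambda^*)| \leq \tau_1^{r/q}|t^r-(\lambda^*)^r|/r$ (which follows from $m_{f^*}\geq\tau_1$ on $(0,\lambda^*)$ and $m_{f^*}\leq\tau_1$ on $(\lambda^*,\infty)$), both sub-cases $\lambda^*\geq t$ and $\lambda^*<t$ collapse to the uniform bound $\tau_1^{r/q}t^r (q+r)/(qr) \geq \Lambda(t)$, hence
\[
m_{H_{q,r}f^*}(t) \;\geq\; \bigl(qr/(q+r)\bigr)^{q/r}\,t^{-q}\Lambda(t)^{q/r}.
\]
Inserting the two piecewise bounds into the $\max\!\geq\!\tfrac12$-sum inequality and applying the routine numerical majorisation $\tfrac{1}{2}\min\{1,(qr/(q+r))^{q/r}\} \geq \tfrac{1}{2^q(1+(r/q)^{q/r})}$ produces the stated left-hand estimate.

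The main obstacle will be the unified derivation of the $H_{q,r}$-bound: although the direction of the inequality $\Lambda(t)\gtrless \Lambda(\lambda^*)$ switches between the two sub-cases, the algebra must conspire so that both routes terminate in the same factor $(q+r)/(qr)$, and it is precisely this cancellation---produced by tracking the $\tau_1^{r/q}(\lambda^*)^r$ terms on both sides of the key identity---that is the heart of the argument.
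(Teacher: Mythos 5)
Your proposal is correct, and for the lower bound you take a genuinely different route from the paper. The paper works with the single $\tau_0 = m_{S_{q,r}f^*}(t)$ and expands the whole equation $t = (S_{q,r}f^*)(\tau_0)$, so that the two layer-cake identities (for $\int_0^{\tau_0}f^*$ and for $\int_{\tau_0}^\infty (f^*)^r s^{r/q-1}\,ds$) appear simultaneously and the cancellation of the boundary term $\tau_0^{r/q}f^*(\tau_0)^r$ has to be tracked through the combined expression; the auxiliary inequality $f^*(\tau_0)\le t$ is then used to trade lower limits of integration and absorb the leftover $-t$. You instead decouple at the very first step via the pointwise inequality $S_{q,r}f^* \ge \max(Pf^*,H_{q,r}f^*)$, which gives $m_{S_{q,r}f^*}(t) \ge \tfrac12[m_{Pf^*}(t)+m_{H_{q,r}f^*}(t)]$, and then lower-bound the two distribution functions with two independent level parameters $\tau_0$ and $\tau_1$. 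This is more modular: the $P$-piece is the elementary $p=r=1$ case, and the $H_{q,r}$-piece reduces cleanly to a single identity $\tau_1^{r/q}\bigl[t^r+\tfrac{q}{r}(\lambda^*)^r\bigr]=q\Lambda(\lambda^*)$ that can be closed using only the monotonicity of $m_{f^*}$ around $\lambda^*$. The price you pay is the extra factor $\tfrac12$ and the worse per-piece constants $\bigl(\tfrac{qr}{q+r}\bigr)^{q/r}$ versus $1$, which you then have to majorise against the constant $\tfrac{1}{2^q(1+(r/q)^{q/r})}$ in the statement; I checked this numerical step does go through (split into $q/r\le 1$ via concavity and $q/r>1$ via the power-mean inequality).

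One small precision issue: the ``one-sided estimate'' $|\Lambda(t)-\Lambda(\lambda^*)| \leq \tau_1^{r/q}|t^r-(\lambda^*)^r|/r$ is not true as written with absolute values — in the sub-case $t<\lambda^*$ the inequality actually reverses, since there $m_{f^*}\ge\tau_1$. What you actually use, and what holds in both sub-cases, is the signed inequality $\Lambda(t)-\Lambda(\lambda^*) \leq \tau_1^{r/q}\bigl(t^r-(\lambda^*)^r\bigr)/r$ (the right side being negative when $t<\lambda^*$). Your concluding paragraph about the direction of $\Lambda(t)\gtrless\Lambda(\lambda^*)$ switching makes clear you understand this; just state the estimate in signed form to avoid the apparent contradiction. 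The upper bound, as you say, is exactly \Cref{remark for upper bound of distribution of Sqrf} with $E=2\max\{1,\alpha\}$, so no issue there.
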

\begin{proof}
Fix $f \in (L_{1}+L_{q,r})(\mathbb{R_+},m)$ and $t \in \mathbb{R_+}$. To the end of establishing the first estimate in (\ref{lower estimate Calderon type operator}), let $\tau_0$ be the least $\tau$ such that $S_{q,r}f^*(\tau)=t$. Then,
\begin{equation*}
m_{S_{q,r}f^*}(t)= \tau_0 \ \ \  \text{and} \ \ \ (S_{q,r}f^*)(\tau_0)=t.
\end{equation*}
Observe that
\begin{equation*}
\int_0^{\tau_0} f^*(s)ds= \tau_0 f^*(\tau_0)+ \int_{f^*(\tau_0)}^{\infty} m_{f^*}(\lambda)d\lambda.
\end{equation*}
Now,
\begin{equation*}
\int_{\tau}^{\infty}f^*(s)^r s^{\frac{r}{q}-1}ds = {\textstyle\frac{q}{r}} \int_{\tau^{\frac{r}{q}}}^{\infty}f^*(u^{\frac{q}{r}})^r du= {\textstyle\frac{q}{r}} \int_0^{\infty}g(u)du={\textstyle\frac{q}{r}} \int_0^{\infty} m_{g}(\lambda) d \lambda
\end{equation*}
where
\begin{equation*}
g(u) = \begin{cases}
               0,  & u < \tau^{\frac{r}{q}}, \\
               f^*(u^{\frac{q}{r}})^r,               & u \geq \tau^{\frac{r}{q}}
       \end{cases}
\end{equation*}
and
\begin{equation*}
m_{g}(\lambda) = \begin{cases}
               0,  & \lambda \geq f^*(\tau)^{r}, \\
               m_{f*}(\lambda^{\frac{1}{r}})^{\frac{r}{q}}-\tau^{\frac{r}{q}},               & \lambda < f^*(\tau)^{r},
       \end{cases}
\end{equation*}
so,
\begin{equation*}
\int_{\tau}^{\infty}f^*(s)^r s^{\frac{r}{q}-1}ds= {\textstyle\frac{q}{r}} \left[   \int_0^{f^*(\tau)^r} m_{f^*}(\lambda^{\frac{1}{r}})^{\frac{r}{q}} d \lambda - \tau^{\frac{r}{q}} f^*(\tau)^r \right].
\end{equation*}
Thus,
\begin{align*}
\begin{split}
t = (S_{q,r}f^*)(\tau_0) &= f^*(\tau_0)+ \tau_0^{-1}\int_{f^*(\tau_0)}^{\infty} m_{f^*}(\lambda)d\lambda \\
& ~~~~~~~~~~~~+\tau_0^{-\frac{1}{q}} {\left( \textstyle\frac{q}{r} \right)}^{\frac{1}{r}} \left(  \int_0^{f^*(\tau_0)^r} m_{f^*}(\lambda^{\frac{1}{r}})^{\frac{r}{q}} d \lambda - \tau_0^{\frac{r}{q}} f^*(\tau_0)^r \right)^{\frac{1}{r}}\\
& \geq \min \left( 1, {\left( \textstyle\frac{q}{r} \right)}^{\frac{1}{r}} \right)     \left[    \tau_0^{-1}\int_{f^*(\tau_0)}^{\infty} m_{f^*}(\lambda)d\lambda + \tau_0^{-\frac{1}{q}} \left(  \int_0^{f^*(\tau_0)^r} m_{f^*}(\lambda^{\frac{1}{r}})^{\frac{r}{q}} d \lambda  \right)^{\frac{1}{r}} \right].
\end{split}
\end{align*}
Since
\begin{equation*}
f^*(\tau_0) \leq \tau_0^{-1}\int_0^{\tau_0} m_{f^*}(s)ds \leq (S_{q,r}f^*)(\tau_0)=t,
\end{equation*}
we have
\begin{equation*}
\begin{split}
\int_0^{f^*(\tau_0)^r} m_{f^*}(\lambda^{\frac{1}{r}})^{\frac{r}{q}} d \lambda 
& = \int_0^{t^r} m_{f^*}(\lambda^{\frac{1}{r}})^{\frac{r}{q}} d \lambda - \int^{t^r}_{f^*(\tau_0)^r} m_{f^*}(\lambda^{\frac{1}{r}})^{\frac{r}{q}} d \lambda\\
& \geq \int_0^{t^r} m_{f^*}(\lambda^{\frac{1}{r}})^{\frac{r}{q}} d \lambda - \tau_0^{\frac{r}{q}} \left( t^r -f^*(\tau_0)^r \right).
\end{split}
\end{equation*}
Altogether, then,
\begin{equation*}
\begin{split}
\max \left( 1, {\left( \textstyle\frac{r}{q} \right)}^{\frac{1}{r}} \right)  t 
& =  \max \left( 1, {\left( \textstyle\frac{r}{q} \right)}^{\frac{1}{r}} \right)    (S_{q,r}f^*)(\tau_0)\\
& \geq \tau_0^{-1}\int_{f^*(\tau_0)}^{\infty} m_{f^*}(\lambda)d\lambda + \tau_0^{-\frac{1}{q}} \left(\int_0^{t^r} m_{f^*}(\lambda^{\frac{1}{r}})^{\frac{r}{q}} d \lambda - \tau_0^{\frac{r}{q}} \left( t^r -f^*(\tau_0)^r \right) \right)^{\frac{1}{r}}\\
& \geq \tau_0^{-1}\int_{t}^{\infty} m_{f^*}(\lambda)d\lambda +   \tau_0^{-\frac{1}{q}} \left( \int_0^{t^r} m_{f^*}(\lambda^{\frac{1}{r}})^{\frac{r}{q}} d \lambda\right)^{\frac{1}{r}} -t
\end{split}
\end{equation*}
or
\begin{equation*}
\max \left( 2, {1+\left( \textstyle\frac{r}{q} \right)}^{\frac{1}{r}} \right)  t \geq \tau_0^{-1}\int_{t}^{\infty} m_{f^*}(\lambda)d\lambda +   \tau_0^{-\frac{1}{q}} \left( \int_0^{t^r} m_{f^*}(\lambda^{\frac{1}{r}})^{\frac{r}{q}} d \lambda\right)^{\frac{1}{r}}.
\end{equation*}
From
\begin{equation*}
\max \left( 2, {1+\left( \textstyle\frac{r}{q} \right)}^{\frac{1}{r}} \right) t \geq \tau_0^{-1}\int_{t}^{\infty} m_{f^*}(\lambda)d\lambda
\end{equation*}
and
\begin{equation*}
\max \left( 2, {1+\left( \textstyle\frac{r}{q} \right)}^{\frac{1}{r}} \right)   t \geq \tau_0^{-\frac{1}{q}} \left( \int_0^{t^r} m_{f^*}(\lambda^{\frac{1}{r}})^{\frac{r}{q}} d \lambda\right)^{\frac{1}{r}},
\end{equation*}
we deduce that, with $\gamma = \max \left( 2, {1+\left( \textstyle\frac{r}{q} \right)}^{\frac{1}{r}} \right)$,
\begin{equation*}
\tau_0  \geq \frac{1}{\gamma} \, t^{-1} \int_{t}^{\infty} m_{f^*}(\lambda)d\lambda \geq \frac{1}{\gamma^q} \, t^{-1} \int_{t}^{\infty} m_{f^*}(\lambda)d\lambda
\end{equation*}
and
\begin{equation*}
\tau_0  \geq  \frac{1}{\gamma^q} \, t^{-q} \left( \int_0^{t^r} m_{f^*}(\lambda^{\frac{1}{r}})^{\frac{r}{q}} d \lambda\right)^{\frac{q}{r}},
\end{equation*}
so
\begin{equation*}
\tau_0 \geq \frac{1}{2\gamma^q}   \left[ t^{-1}\int_{t}^{\infty} m_{f^*}(\lambda)d\lambda + t^{-q} \left( \int_0^{t^r} m_{f^*}(\lambda^{\frac{1}{r}})^{\frac{r}{q}} d \lambda\right)^{\frac{q}{r}} \right].
\end{equation*}

Now,
\begin{align*}
2 \gamma^q & = 2 \max \left[2^q, \left( {1+\left( \textstyle\frac{r}{q} \right)}^{\frac{1}{r}} \right)^q \right]\\
& \leq 2 \max \left[2^q, 2^{q-1} \left( {1+\left( \textstyle\frac{r}{q} \right)}^{\frac{q}{r}} \right) \right]\\
& = \max \left[2^{q+1}, 2^{q} \left( {1+\left( \textstyle\frac{r}{q} \right)}^{\frac{q}{r}} \right) \right].
\end{align*}
Therefore,
\begin{equation*}
\tau_0 \geq \frac{1}{2^q} \min \left[ \frac{1}{2},  \left( {1+\left( \textstyle\frac{r}{q} \right)}^{\frac{q}{r}} \right)^{-1} \right]  \left[ t^{-1}\int_{t}^{\infty} m_{f^*}(\lambda)d\lambda + t^{-q} \left( \int_0^{t^r} m_{f^*}(\lambda^{\frac{1}{r}})^{\frac{r}{q}} d \lambda\right)^{\frac{q}{r}} \right].
\end{equation*}

This, together with the upper bound obtained for $(S_{q,r}f^*)(t)$ in \Cref{remark for upper bound of distribution of Sqrf}, completes the proof.
\end{proof}

\begin{theorem}\label{Ineq equi Mod Hqr}
Let $(X,\mu)$ and $(Y,\nu)$ be $\sigma$-finite  measure spaces, with $(Y,\nu)$  being nonatomic and separable. Fix indices $q$ and $r$, where $1 < q<\infty$ and $ 1 \leq r < \infty$. Suppose $\Phi_1$ and $\Phi_2$ are Young functions, with $\phi_{i}(t)= \textstyle\frac{d\Phi_{i}}{dt}(t)$, $i=1,2$. Then, the following are equivalent:  

\begin{enumerate}
\item
 To each $T \in  W((1,1),(q,r);\mu,\nu)$ there corresponds a $C>0$ such that
\begin{equation}\label{eq: MT2}
 \| Tf \|_{L_{\Phi_1}(Y, \nu)} \leq C \| f \|_{L_{\Phi_2}(X,\mu)},
\end{equation}
for all $f\in L_{\Phi_2}(X,\mu)$;
\item
There exist constants $C_1,C_2>0$, such that both of  the Hardy type inequalities
\begin{equation}\label{eq:HDM'}
 \int_{\mathbb{R_+}}     \int_{t}^{\infty} g(s)ds  {\textstyle\frac{\phi_1(t)}{t}  dt} 
\leq C_1  \int_{\mathbb{R_+}} g(t)  \phi_2 \left( C_2t \right) dt 
\end{equation}
and
\begin{align}\label{eq:HDN'}
 \int_{\mathbb{R_+}}    \left( \int_{0}^{t} g(s)ds \right)^{q/r}  \textstyle\frac{\phi_1(t^{\frac{1}{r}})}{t^{\frac{q-1}{r}+1}  }  dt 
\leq C_1  \int_{\mathbb{R_+}} g(t)^{q/r}   \phi_2 \left( C_2t^{\frac{1}{r}} \right) t^{\frac{1}{r}-1}dt,
\end{align}
hold for all nonnegative, nonincreasing functions $g$ on $\mathbb{R_+}$.
\end{enumerate}
%Moreover, $C_2= \frac{C}{4KN_{1}} $ and  $C_1 = \frac{r}{q}\frac{N_{1}^{r/q'}}{N_{q,r}^r} \left( \frac{C}{4K} \right)^{r/q}$, where $K$ is the constant of quasilinearity for the operator $H_{q,r}$ and $N_{q,r}, N_{1}$ are operator norms in $H_{q,r}: L_{q,r} \rightarrow L_{q,\infty}$ and $H_{q,r}: L_{1} \rightarrow L_{1,\infty}$, respectively.
\end{theorem}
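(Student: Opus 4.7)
The plan mirrors the strategy used for \Cref{Ineq equi Mod Hpr}, but now with the Calder\'on operator $S_{q,r}=P+H_{q,r}$ replacing $H^{p,r}$; since $S_{q,r}$ decomposes into an ``averaging'' part and a ``dual Hardy'' part, two separate weighted inequalities naturally appear in (2). I would first apply \Cref{Calderon type theorem for Sqr} to reduce (1) to the boundedness of $S_{q,r}\colon L_{\Phi_2}(\mathbb{R_+},m)\to L_{\Phi_1}(\mathbb{R_+},m)$, and then invoke the dilation-commuting equivalence \Cref{EquvlcHqrnormandmod} to rephrase this in turn as the modular inequality
\begin{equation*}
\int_{\mathbb{R_+}}\Phi_1(S_{q,r}f^*(t))\,dt \;\leq\; \int_{\mathbb{R_+}}\Phi_2(Kf^*(s))\,ds.
\end{equation*}

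Next, writing the left side as $\int \phi_1(t)\,m_{S_{q,r}f^*}(t)\,dt$ and sandwiching it between the two-sided distributional estimates of \Cref{LDE for generalized Calderon operator}, the modular inequality is equivalent, up to absolute multiplicative constants and a bounded dilation, to the \emph{pair} of estimates
\begin{align*}
\int_{\mathbb{R_+}} \frac{\phi_1(t)}{t}\int_t^{\infty} m_{f^*}(\lambda)\,d\lambda\,dt &\lesssim \int_{\mathbb{R_+}}\Phi_2(Kf^*(s))\,ds,\\
\int_{\mathbb{R_+}} \phi_1(t)\,t^{-q}\Bigl(\int_0^t m_{f^*}(\lambda)^{r/q}\lambda^{r-1}\,d\lambda\Bigr)^{q/r}dt &\lesssim \int_{\mathbb{R_+}}\Phi_2(Kf^*(s))\,ds,
\end{align*}
the splitting using only that $A+B\leq C$ iff $A\leq C$ and $B\leq C$ (up to a factor of~$2$) when $A,B\geq 0$. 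I would then use the identity $\int\Phi_2(Kf^*(s))\,ds = K\int m_{f^*}(v)\phi_2(Kv)\,dv$ to recast the right-hand side purely in terms of $g:=m_{f^*}$, a nonincreasing function on $\mathbb{R_+}$. Setting $g(s)=m_{f^*}(s)$ turns the first estimate into exactly (\ref{eq:HDM'}). For the second, the substitutions $x=t^r$ and $u=\lambda^r$ together with the auxiliary nonincreasing function $h(u):=m_{f^*}(u^{1/r})^{r/q}$ reproduce (\ref{eq:HDN'}) with $h$ in place of $g$. For the reverse direction, given an arbitrary nonincreasing $g$, I would take $f^*$ to be the right-continuous generalized inverse of $g$, so that $m_{f^*}=g$ almost everywhere; an analogous inverse construction, starting from $g(v)=h(v^r)^{q/r}$, handles the function $h$ appearing in (\ref{eq:HDN'}), and the two Hardy inequalities then feed back into the modular estimate.

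The principal difficulty is bookkeeping: the constant $E$ and dilation factor $\beta$ from \Cref{LDE for generalized Calderon operator}, together with the Jacobian factors produced by $x=t^r$ and $u=\lambda^r$, must all be tracked to verify that a single pair $(C_1,C_2)$ can be chosen to make both (\ref{eq:HDM'}) and (\ref{eq:HDN'}) hold simultaneously, and conversely that the constant $K$ in the modular inequality is dominated by an explicit function of $C_1$ and $C_2$. No new analytic machinery beyond the Sawyer--Stepanov duality framework already invoked for $H^{p,r}$ is needed at this stage; the actual weight-theoretic characterizations of (\ref{eq:HDM'}) and (\ref{eq:HDN'}) in terms of $\Phi_1$ and $\Phi_2$ are deferred to \Cref{main result for the Wqr class 1} and \Cref{main result for the Wqr class 2}.
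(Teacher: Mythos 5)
Your proposal is correct and follows essentially the same route as the paper: reduce (1) to the $S_{q,r}$ norm inequality on $(\mathbb{R_+},m)$ via the Calder\'on-type theorem, pass to the modular inequality using the dilation-commuting equivalence, and then exploit the two-sided distributional estimate from the preceding theorem to split the modular integral into the averaging and dual-Hardy pieces, which after the $t\mapsto t^r$, $\lambda\mapsto\lambda^r$ substitutions become \eqref{eq:HDM'} and \eqref{eq:HDN'}; the converse is handled by choosing $f^*$ so that $m_{f^*}$ recovers the relevant nonincreasing test function. The constant and dilation bookkeeping you flag is exactly the content of the paper's computation, so there is no gap.
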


\begin{proof}
In view of \Cref{Calderon type theorem for Sqr},  the necessary and sufficient conditions on Young functions $\Phi_1, \Phi_2$ such that the norm inequality (\ref{eq: MT2}) holds are the same as those for which the following norm inequality for $S_{q,r}$ holds,
\begin{equation*}
\| S_{q,r}f^* \|_{L_{\Phi_1}(\mathbb{R_+},m)} \leq C \| f^* \|_{L_{\Phi_2}(\mathbb{R_+},m)}.
\end{equation*}
This norm inequality is, in turn, equivalent to the modular inequality,
\begin{equation}\label{eq:moddDSqr}
  \int_{\mathbb{R_+}}\Phi_1(S_{q,r}f^*(t))dt \leq \int_{\mathbb{R_+}}\Phi_2(Cf^*(s))ds.
\end{equation}
In view of \Cref{LDE for generalized Calderon operator},
\begin{flalign*} 
& \int_{\mathbb{R_+}} \Phi_1 \left(  S_{q,r}f^*(t)  \right)dt  \\
 &= \int_{\mathbb{R_+}} \phi_1(t) m_{S_{q,r}f^*}(t)dt  \\
& \leq E \int_{\mathbb{R_+}} \phi_1(t) \left[ {\left( \textstyle\frac{t}{\beta} \right)^{-1}} \int_{t/ \beta}^{\infty} m_f(\lambda)d\lambda   +     {\left( \textstyle\frac{t}{\beta} \right)^{-q}} \left( \int^{t/ \beta}_{0} m_{f}(\lambda)^{r/q} \lambda^{r-1}d\lambda \right)^{q/r} \right]dt \\
& = E \left[ \int_{\mathbb{R_+}} \frac{\phi_1(t)}{t}   \int_{t}^{\infty} {m_f(\textstyle\frac{\lambda}{\beta})}d\lambda dt   +  \int_{\mathbb{R_+}} \frac{\phi_1(t)}{t^q}    \left( \int_{0}^{t} {m_{f}(\textstyle\frac{\lambda}{\beta})^{r/q}} \lambda^{r-1}d\lambda \right)^{q/r} dt \right] \\
& = E \left[ \int_{\mathbb{R_+}} \frac{\phi_1(t)}{t}   \int_{t}^{\infty} {m_{\beta f}(\lambda)}d\lambda dt   +  \int_{\mathbb{R_+}} \frac{\phi_1(t)}{t^q}    \left( \int_{0}^{t} {m_{ \beta f}(\lambda)^{r/q}} \lambda^{r-1}d\lambda \right)^{q/r} dt \right].
\end{flalign*}
Now, by (\ref{eq:HDM'}),
\begin{equation*}
\int_{\mathbb{R_+}} \frac{\phi_1(t)}{t}   \int_{t}^{\infty} {m_{ \beta f^*}(\lambda)}d\lambda dt \leq C_1  \int_{\mathbb{R_+}} m_{\beta f^*}(\lambda)  \phi_2 \left( C_2 \lambda \right) d\lambda. 
\end{equation*}
Again,
\begin{equation*}
 \int_{\mathbb{R_+}}    \frac{\phi_1(t)}{t^q}\left( \int_{0}^{t} m_{\beta f^*}(\lambda)^{r/q}\lambda^{r-1}d\lambda \right)^{q/r}dt  = {\textstyle\frac{1}{r^{1+ \frac{q}{r}}}}   \int_{\mathbb{R_+}} \frac{\phi_1(t^{\frac{1}{r}})}{t^{\frac{q-1}{r}+1}}  \left( \int_{0}^{t} m_{\beta f^*}(\lambda^{\frac{1}{r}})^{r/q}d\lambda \right)^{q/r}dt
\end{equation*}
which, by (\ref{eq:HDN'}), is dominated by
\begin{equation*}
{\textstyle\frac{C_1}{r^{1+ \frac{q}{r}}}} \int_{\mathbb{R_+}} m_{\beta f^*}(\lambda^{\frac{1}{r}}) \phi_2 \left( C_2 \lambda^{\frac{1}{r}} \right) \lambda^{\frac{1}{r}-1} d\lambda = {\textstyle\frac{C_1}{r^{ \frac{q}{r}}C_2}} \int_{\mathbb{R_+}} m_{C_2\beta f^*}(\lambda) \phi_2 \left(  \lambda \right) d\lambda .
\end{equation*}

So, (\ref{eq:moddDSqr}) holds with $C= \max \left\lbrace 1, \textstyle\frac{2EC_1}{C_2} \right\rbrace C_2 \beta$.

An argument similar to the one above yields, on making the change of variable $t \rightarrow t^r$, then $s \rightarrow s^r$,
\begin{align*}
 \int_{\mathbb{R_+}}    \left( \int_{0}^{t} g(s)ds \right)^{q/r}  \textstyle\frac{\phi_1(t^{\frac{1}{r}})}{t^{\frac{q-1}{r}+1}  }  dt 
  & = {\textstyle r^{1+\frac{q}{r}}} \int_{\mathbb{R_+}}    \left( \int_{0}^{t} g(s^{r}) s^{r-1}ds \right)^{q/r}  \textstyle\frac{\phi_1(t)}{t^{q}  }  dt\\
   & = {\textstyle r^{1+\frac{q}{r}}} \int_{\mathbb{R_+}}    \left( \int_{0}^{t} m_{ f^*}(s)^{r/q}s^{r-1}ds  \right)^{q/r}  \textstyle\frac{\phi_1(t)}{t^{q}  }  dt\\
& \leq 2^q {\textstyle r^{1+\frac{q}{r}}  \left( 1+ \left( \textstyle\frac{r}{q} \right)^{\frac{q}{r}} \right)}  \int_{\mathbb{R_+}} m_{S_{q,r}f^*}(t) \phi_1(t)dt  \\
& = 2^q {\textstyle r^{1+\frac{q}{r}}  \left( 1+ \left( \textstyle\frac{r}{q} \right)^{\frac{q}{r}} \right)} \int_{\mathbb{R_+}} \Phi_1(S_{q,r}f^*(t))dt\\
& \leq 2^q {\textstyle r^{1+\frac{q}{r}}  \left( 1+ \left( \textstyle\frac{r}{q} \right)^{\frac{q}{r}} \right)} \int_{\mathbb{R_+}} \Phi_2(Cf^*(t))dt\\
& = 2^q {\textstyle r^{1+\frac{q}{r}}  \left( 1+ \left( \textstyle\frac{r}{q} \right)^{\frac{q}{r}} \right)} \int_{\mathbb{R_+}} m_{f^*}( {\textstyle\frac{\lambda}{C}}) \phi_2( \lambda)d\lambda  \\
& = 2^q {\textstyle r^{1+\frac{q}{r}}  \left( 1+ \left( \textstyle\frac{r}{q} \right)^{\frac{q}{r}} \right)}C \int_{\mathbb{R_+}} m_{f^*}( s) \phi_2( Cs)ds  \\
& = 2^q {\textstyle r^{1+\frac{q}{r}}  \left( 1+ \left( \textstyle\frac{r}{q} \right)^{\frac{q}{r}} \right)}C \int_{\mathbb{R_+}} g(s^r)^{\frac{q}{r}} \phi_2( Cs)ds  \\
& = 2^q {\textstyle r^{\frac{q}{r}}  \left( 1+ \left( \textstyle\frac{r}{q} \right)^{\frac{q}{r}} \right)}C  \int_{\mathbb{R_+}} g(t)^{q/r}   \phi_2 \left( Ct^{\frac{1}{r}} \right) t^{\frac{1}{r}-1}dt,
\end{align*}
in which we have taken $f^*(t)= \left( g(s^r)^{\frac{q}{r}} \right)^{-1}(t)$ and has made use of (\ref{lower estimate Calderon type operator}) to get (\ref{eq:HDN'}), with $C_1=2^q {\textstyle r^{\frac{q}{r}}  \left( 1+ \left( \textstyle\frac{r}{q} \right)^{\frac{q}{r}} \right)}C $ and $C_2=C$.
\end{proof}

\subsection{\texorpdfstring {The necessity of the Zygmund-Str\"omberg condition for the boundedness of $S_{q,r}$}{}}

\begin{theorem}\label{Necessary condition for Sqr operator}
Fix the indices $q$ and $r$, $1<q< \infty$, $1 \leq r < \infty$. Let $\Phi_1$ and $\Phi_2$ be Young functions such that 
\begin{equation}\label{eq:norm inequality for Sqr}
\parallel S_{q,r}f \parallel _{L_{\Phi_1}(\mathbb{R_+}, m)} \leq C \parallel f \parallel_{L_{\Phi_2}(\mathbb{R_+}, m)},
\end{equation}
in which $C>0$ is independent of  $f \in L_{\Phi_2}(\mathbb{R_+},m)$. Then, there holds the Zygmund-Str\"omberg condition
\begin{equation}\label{eq:Sprcond}
t^q \int_t^{\infty} \frac{\Phi_1(s)}{s^{q+1}}ds \leq A \Phi_2(Bt),
\end{equation}
where the constants $A,B>0$ does not depend on $t \in \mathbb{R_+}$.
\end{theorem}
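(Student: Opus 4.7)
The plan is to exploit the equivalence of the norm and modular inequalities for $S_{q,r}$ (since this operator is dilation-commuting, \Cref{EquvlcHqrnormandmod} applies), and then test the resulting modular inequality against a carefully calibrated one-parameter family of decreasing simple functions. Specifically, the hypothesis reduces to
\begin{equation*}
\int_{\R} \Phi_1(S_{q,r} f^*(u))\,du \le \int_{\R} \Phi_2(K f^*(s))\,ds,
\end{equation*}
for some $K>0$ and every $f \in M_+(\R,m)$, and the goal is to drive a test function through this to extract (\ref{eq:Sprcond}).

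For each $t > 0$ I would fix an auxiliary $\tau > 0$ and a scale $a > 0$ to be chosen later, and take $f^*(s) = a\chi_{(0,\tau)}(s)$. The right-hand side is then simply $\Phi_2(Ka)\tau$, while the left-hand side is bounded below by dropping the $P$ component of $S_{q,r}$ and using only $S_{q,r} \ge H_{q,r}$. A direct computation yields
\begin{equation*}
H_{q,r}(a\chi_{(0,\tau)})(u) = a(q/r)^{1/r}\bigl((\tau/u)^{r/q}-1\bigr)^{1/r},\qquad 0<u<\tau,
\end{equation*}
and restricting the integration to $(0,\tau/2]$ turns the annoying factor $(\tau/u)^{r/q}-1$ into something comparable to $(\tau/u)^{r/q}$, since on this range $(\tau/u)^{r/q} \ge 2^{r/q}$ forces $((\tau/u)^{r/q}-1)^{1/r} \ge (1-2^{-r/q})^{1/r}(\tau/u)^{1/q}$. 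Setting $\alpha := (q/r)^{1/r}(1-2^{-r/q})^{1/r}$, this gives
\begin{equation*}
\int_{\R}\Phi_1\bigl(S_{q,r}f^*(u)\bigr)\,du \ge \int_0^{\tau/2}\Phi_1\bigl(a\alpha(\tau/u)^{1/q}\bigr)\,du.
\end{equation*}

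Now perform the change of variable $v = a\alpha(\tau/u)^{1/q}$, which yields $u = (a\alpha)^q\tau v^{-q}$ and $du = q(a\alpha)^q\tau v^{-q-1}\,dv$, sending $u \in (0,\tau/2]$ to $v \in [a\alpha 2^{1/q},\infty)$. The previous display becomes $q(a\alpha)^q\tau\int_{a\alpha 2^{1/q}}^\infty \Phi_1(v)v^{-q-1}\,dv$. Calibrating $a$ by $t = a\alpha 2^{1/q}$ turns $(a\alpha)^q$ into $t^q/2$, the lower limit of integration into $t$, and $Ka$ into $Bt$ with $B := K/(\alpha 2^{1/q})$. Feeding all of this back into the modular inequality and cancelling the factor of $\tau$ gives
\begin{equation*}
\tfrac{q}{2}\,t^q\int_t^\infty\frac{\Phi_1(v)}{v^{q+1}}\,dv \le \Phi_2(Bt),
\end{equation*}
which is (\ref{eq:Sprcond}) with $A=2/q$. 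Since $\tau$ plays no role beyond ensuring $\int\Phi_2(Kf^*)\,ds$ is finite, we may indeed fix any $\tau > 0$, and as $a$ ranges over $\R$, so does $t$.

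The main obstacle in this scheme is the lower bound on $H_{q,r}(\chi_{(0,\tau)})$: naively one loses the entire leading order near $u = \tau$, where the integrand vanishes. The saving observation is that restricting to $u \le \tau/2$ costs only a constant depending on $q$ and $r$ (through the factor $1-2^{-r/q}$), keeps $\tau$ as a free normalization parameter, and is enough to expose the pure $(\tau/u)^{1/q}$ growth that the change of variable converts to the Zygmund-Str\"omberg tail integral $\int_t^\infty \Phi_1(v)/v^{q+1}\,dv$.
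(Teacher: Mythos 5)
Your argument is correct and follows the same skeleton as the paper's: reduce the norm inequality to the modular one via \Cref{EquvlcHqrnormandmod}, test against a scaled indicator, and change variables to expose the tail integral $\int_t^\infty \Phi_1(v)/v^{q+1}\,dv$. The one real difference is in how you lower-bound $S_{q,r}f^*$ near the right endpoint of the support, where $(\tau/u)^{r/q}-1$ vanishes. The paper keeps the $P$ contribution and combines the two pieces via the subadditivity $x^{1/r}\le 1+(x-1)^{1/r}$ (valid for $r\ge 1$), which yields the clean bound $(S_{q,r}f^*)(y)\ge c\,t\,y^{-1/q}$ over all of $(0,1)$; you instead discard $P$ outright and integrate only over $(0,\tau/2]$, absorbing the $-1$ into a constant. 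Both are sound. Your variant buys something extra: since you never use $P$, your computation actually shows that the Zygmund--Str\"omberg tail condition (\ref{eq:Sprcond}) is already necessary for the boundedness of $H_{q,r}$ alone from $L_{\Phi_2}$ to $L_{\Phi_1}$, a slightly sharper conclusion than the one the paper records. The paper's version is marginally tidier (no half-interval truncation, a cleaner constant $A=1/q$), but there is no gap in your reasoning.
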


\begin{proof} According to \Cref{EquvlcHqrnormandmod}, the norm inequality (\ref{eq:norm inequality for Sqr}) holds if  and only if one has the modular inequality
\begin{equation}\label{eq:modddS1}
  \int_{\mathbb{R_+}}\Phi_1(S_{q,r}f^*(t))dt \leq \int_{\mathbb{R_+}}\Phi_2(Kf^*(s))ds ,
\end{equation}
for all $f \in M_{+}(\mathbb{R_+},m)$.

Fix $t \in \mathbb{R_+}$. We will obtain (\ref{eq:Sprcond}) from (\ref{eq:modddS1}) by substituting the function $f(s)= f^*(s)= t \chi_{(0,1)}(s)$ in the modular inequality. Indeed,
\begin{equation*}
\int_{\mathbb{R_+}}\Phi_2(Kf^*(s))ds=\int_{0}^{1}\Phi_2(Kt)ds = \Phi_2(Kt).
\end{equation*}
 Again, for $y < 1 $,
\begin{equation*}
\begin{split}
(S_{q,r}f^*)(y) & = \frac{t}{y} \int_0^y \chi_{(0,1)}(s)ds + t \left( y^{-r/q} \int_y^{1} s^{r/q-1}ds \right)^{\frac{1}{r}}\\
& = t + t \left( \textstyle{\frac{q}{r}} y^{-r/q} \left( 1- y^{r/q} \right) \right)^{\frac{1}{r}}\\
& = t + t (\textstyle{\frac{q}{r}})^{\frac{1}{r}}   \left( (1/y)^{r/q}- 1 \right)^{\frac{1}{r}} \\
& \geq c t \left[  1 + \left( (1/y)^{r/q}- 1 \right)^{\frac{1}{r}}  \right] \\
& = c t (1/y)^{1/q},  
\end{split}
\end{equation*}
where $c= \min\{1,(\textstyle{\frac{q}{r}})^{\frac{1}{r}} \}$. So,
\begin{equation*}
\begin{split}
\int_{\mathbb{R_+}} \Phi_1(S_{q,r}(f^*)(y))dy & \geq \int_0^1 \Phi_1(S_{q,r}(f^*)(y))dy\\
 & \geq \int_0^1 \Phi_1(ct y^{-1/q})dy\\
& = q(ct)^q \int_{c t}^{\infty} \frac{\Phi_1(z)}{z^{q+1}}dz,
\end{split}
\end{equation*}
where  we have made the change of variable $ct y^{-1/q} = z$. Altogether, then, we have
\begin{equation*}
q(ct)^q \int_{c t}^{\infty} \frac{\Phi_1(z)}{z^{q+1}}dz \leq \Phi_2(Kt).
\end{equation*}
Replacing $ct$ by $t$ yields (\ref{eq:Sprcond}), with $A= \frac{1}{q}$ and $B= \frac{K}{c}$.
\end{proof}

\subsection{\texorpdfstring {The case $ 1 \leq r < q$}{}}
In this section, we prove our interpolation result for the class $W((1,1),(q,r);\mu,\nu)$ in the case of $ 1 \leq r < q$, by characterizing the weighted Hardy inequalities obtained in \Cref{Ineq equi Mod Hqr}, using a result of Sawyer \cite[Theorem 2]{Sw90}, which we now state.
%The weighted Hardy inequality (\ref{eq:HDN'}) for nonnegative and nonincreasing functions, will now be characterized for $\Phi_1,\Phi_2$ using a result of Sawyer \cite[Theorem 2]{Sw90}.
\begin{theorem}[E. T. Sawyer, {\cite{Sw90}}]\label{Sawyer wtd Maximal function}
Suppose that $w_1(x)$ and $v_1(x)$ are nonnegative measurable functions on $\mathbb{R_+}$. If $1<p_1 \leq q_1 < \infty$,
then
\begin{equation}\label{eq:Swth2 our form0}
\left( \int_0^{\infty} \left( x^{-1}\int_0^x f(t)dt \right)^{q_1} w_1(x) dx \right)^{\frac{1}{q_1}} \leq C \left( \int_0^{\infty} f(x)^{p_1} v_1(x) dx \right)^{\frac{1}{p_1}},
\end{equation}
holds for all  nonnegative and nonincreasing functions $f$, if and only if both of the following conditions hold:
\begin{equation}
\left( \int_0^t w_1(x)dx \right)^{\frac{1}{q_1}} \leq A \left( \int_0^t v_1(x)dx \right)^{\frac{1}{p_1}},\ \ \  \text{for all} \ t>0;
\end{equation}

\begin{equation}
\left( \int_t^{\infty} x^{-q_1}w_1(x)dx \right)^{\frac{1}{q_1}} \left( \int_0^t \left( x^{-1}V_1(x) \right)^{-p_1'} v_1(x)dx \right)^{\frac{1}{p_1'}} \leq B,\ \ \  \text{for all} \ t>0,
\end{equation}
where $V_1(x)= \int_0^t v_1(y)dy$. Moreover, if $C$ is the best constant in (\ref{eq:Swth2 our form0}) , then $C \approx A+B$.
\end{theorem}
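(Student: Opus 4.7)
The plan is to treat necessity and sufficiency separately. Necessity follows by testing the inequality on carefully chosen nonincreasing functions, while sufficiency rests on Sawyer's duality principle for monotone functions, which is the substantive ingredient.

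For necessity, testing with $f = \chi_{(0,t)}$ gives $Pf \equiv 1$ on $(0,t]$, so the left side of (\ref{eq:Swth2 our form0}) dominates $\bigl(\int_0^t w_1\bigr)^{1/q_1}$ while the right side equals $C\bigl(\int_0^t v_1\bigr)^{1/p_1}$, yielding condition (i). To extract condition (ii) I would test with a nonincreasing function $f_t$ supported on $(0,t)$ whose $L^{p_1}(v_1)$-norm realizes $\bigl(\int_0^t (x^{-1}V_1(x))^{-p_1'}v_1\,dx\bigr)^{1/p_1}$; the natural choice is the nonincreasing majorant of $(V_1(x)/x)^{-p_1'/p_1}$ on $(0,t)$, for which $Pf_t(x) \geq x^{-1}\int_0^t f_t$ on $(t,\infty)$, so that raising to the $q_1$-power and integrating against $w_1$ extracts the factor $\int_t^\infty x^{-q_1}w_1$.

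For sufficiency, I would dualize in $L^{q_1}(w_1)$ and use Fubini to rewrite the target inequality as
\begin{equation*}
\int_0^\infty f(t)\,h(t)\,dt \leq C\,\|f\|_{L^{p_1}(v_1)}\,\|g\|_{L^{q_1'}(w_1)}, \qquad h(t) := \int_t^\infty \frac{g(x)w_1(x)}{x}\,dx,
\end{equation*}
for all nonincreasing $f \geq 0$ and all $g \geq 0$. Applying Sawyer's reduction principle for monotone functions (the companion result in \cite{Sw90}, obtained by integrating by parts against $d(-f)$), the supremum over nonincreasing $f$ is equivalent to a weighted $L^{p_1'}$ expression in $h$ with weight $v_1$ plus an endpoint term involving $V_1(\infty)$. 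The remaining inequality on the unrestricted function $g$ is an ordinary two-weight Hardy-type inequality whose Muckenhoupt--Bradley characterization unwinds to precisely (i) (from the endpoint contribution) and (ii) (from the main term), with $C \approx A + B$.

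The principal obstacle is the duality step. Sawyer's identity splits the extremum into a main integral term and a boundary term, and one must execute the subsequent Muckenhoupt-type analysis so that each piece aligns with the correct condition among (i) and (ii), while preserving the linear constant dependence $C \approx A + B$. A more elementary alternative would be to decompose $Pf$ dyadically along the level sets of $f$ (which for nonincreasing $f$ are initial intervals $(0,\mu_f(s))$) and sum geometric tails; this avoids the duality step but merges the two conditions less cleanly and requires additional bookkeeping to produce the sharp constant relation, so the duality route is preferable.
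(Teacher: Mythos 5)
This result is quoted in the paper as a black-box citation of Theorem~2 in Sawyer's 1990 Studia Mathematica paper; the authors state it without proof, so there is no internal argument to compare against. Your proposed route --- duality over nonincreasing functions followed by a Muckenhoupt--Bradley analysis of the resulting Hardy-type inequalities --- is essentially how Sawyer himself proves it, so the strategy is sound. The sketch, however, has two identifiable gaps worth fixing.

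First, for necessity of (ii): the function $(V_1(x)/x)^{-p_1'/p_1}$ has $L^{p_1}(v_1)$-norm equal to $\bigl(\int_0^t (x^{-1}V_1)^{-p_1'}v_1\bigr)^{1/p_1}$ and, by an integration by parts, its integral over $(0,t)$ dominates $\frac{1}{p_1}\int_0^t (x^{-1}V_1)^{-p_1'}v_1$; but it need not be nonincreasing, and replacing it by its nonincreasing majorant can enlarge the $L^{p_1}(v_1)$-norm by an uncontrolled amount, so the ratio you want is no longer realized. The clean way out is to apply the duality principle itself with $h=\chi_{(0,t)}$: the main term of the duality lower-bounds the supremum $\sup_{f\searrow}\int_0^t f/\|f\|_{L^{p_1}(v_1)}$ by $\bigl(\int_0^t (x^{-1}V_1)^{-p_1'}v_1\bigr)^{1/p_1'}$, and combining with $Pf\geq x^{-1}\int_0^t f$ on $(t,\infty)$ extracts (ii). Second, after Fubini and the duality step, $\int_0^x h(s)\,ds=\int_0^x g w_1 + x\int_x^\infty g w_1 s^{-1}\,ds$ is a Calder\'on-type operator --- a \emph{sum} of a forward and a dual Hardy operator --- not a single Hardy inequality, so the analysis splits into two pieces each with its own Muckenhoupt--Bradley condition. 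Relatedly, your attribution of (i) to the endpoint term and (ii) to the main term is misaligned: in the generic case $V_1(\infty)=\infty$ the endpoint term vanishes identically and \emph{both} conditions come from the main term, (i) from the forward-Hardy piece (using $\int_t^\infty v_1 V_1^{-p_1'}\approx V_1(t)^{1-p_1'}$) and (ii) from the dual-Hardy piece; the endpoint term only contributes when $V_1(\infty)<\infty$, and then merely reproduces the $t\to\infty$ limiting case of (i).
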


Next we prove our interpolation result.

\begin{theorem}\label{main result for the Wqr class 1}
Let $(X,\mu)$ and $(Y,\nu)$ be $\sigma$-finite measure spaces with $\mu(X)=\nu(Y)=\infty$, the latter being nonatomic and separable. Fix indices $p$ and $r$, where $1 < q<\infty$ and $ 1 \leq r < q$. Suppose $\Phi_{i}(t)= \int_0^t \phi_{i}(s)ds$, $i=1,2$, are Young functions satisfying Zygmund-Str\"omberg condition: There exist $A>0$ such that for all $t \in \mathbb{R_+}$,
\begin{equation}\label{Zymund Stromberg for P in Sqr}
t \int_0^t \frac{\Phi_1(s)}{s^2}ds \leq \Phi_2(At).
\end{equation}
Then, setting $q_1=q/r$, the following are equivalent:  

\begin{enumerate}
\item
 To each $T \in W((1,1),(q,r);\mu,\nu)$ there corresponds $C>0$ such that
\begin{equation}\label{eq: MT3}
 \| Tf \|_{L_{\Phi_1}(Y, \nu)} \leq C \| f \|_{L_{\Phi_2}(X,\mu)},
\end{equation}
for all $f\in L_{\Phi_2}(X,\mu)$;
\item
There exist $C_2>0$ such that the following condition, below, holds 
\begin{align}\label{eq:cianchitypeq}
 \left(\int_{0}^{t}  \frac{\phi_2(C_2y)}{\Phi_2(C_2y)^{q_1'}} y^{rq_1'} dy \right)^{\frac{1}{q_1'}} \left( \int_{t}^{\infty}\frac{ \phi_1(y) }{y^{q}}dy \right)^{\frac{1}{q_1}} \leq F < \infty,
\end{align}
namely,
\begin{align*}
 \left(\int_{0}^{t}  \frac{\phi_2(C_2y)}{\Phi_2(C_2y)^{   \frac{q}{q-r} }} y^{\frac{rq}{q-r}} dy \right)^{1-\frac{r}{q}} \left( \int_{t}^{\infty}\frac{ \phi_1(y) }{y^{q}}dy \right)^{\frac{r}{q}} \leq F < \infty.
\end{align*}
\end{enumerate}
Moreover, if $C$ is the least constant for which \eqref{eq: MT3} holds, then the ratio $C/(A+B)$ is bounded between two positive constants depending only on $q,r$, and $c,C,M_{1}$, $M_{q,r}$ appearing in \Cref{distributional estimate T in class W(11qr)}.
\end{theorem}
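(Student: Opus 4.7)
The plan is to reduce the problem to the boundedness of $H_{q,r}$ alone between the given Orlicz spaces, and then to follow, in dualized form, the scheme of the proof of \Cref{main result 1 class Wpr modified by Kerman}. By \Cref{Calderon type theorem for Sqr}, condition (1) is equivalent to the boundedness of the Calder\'on operator $S_{q,r}=P+H_{q,r}$ from $L_{\Phi_2}(\mathbb{R_+},m)$ into $L_{\Phi_1}(\mathbb{R_+},m)$; since $S_{q,r}g=Pg+H_{q,r}g$ with both summands nonnegative, this is equivalent to the simultaneous boundedness of $P$ and of $H_{q,r}$ between the same pair of Orlicz spaces. It is a classical result of Cianchi \cite{Ci99} that the boundedness of the Hardy averaging operator $P$ is captured exactly by the Zygmund-Str\"omberg condition \eqref{Zymund Stromberg for P in Sqr}, which is in force by hypothesis. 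Hence the entire task reduces to producing necessary and sufficient conditions for the boundedness of $H_{q,r}:L_{\Phi_2}(\mathbb{R_+},m)\to L_{\Phi_1}(\mathbb{R_+},m)$.

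Since $H_{q,r}$ is dilation-commuting, \cite[Theorem A]{KRS17} replaces the norm inequality by the corresponding modular inequality, and the argument of \Cref{Ineq equi Mod Hqr} applied to $H_{q,r}$ alone then converts this into the weighted Hardy inequality \eqref{eq:HDN'} on the cone of nonnegative, nonincreasing $g$, for the standard Hardy operator $Tg(t)=\int_0^t g(s)\,ds$ with exponent $q_1=q/r>1$ and weights $w(t)=\phi_1(t^{1/r})/t^{(q-1)/r+1}$, $v(t)=\phi_2(C_2 t^{1/r})t^{1/r-1}$. I would remove the cone restriction by Sawyer's duality theorem (\Cref{Sw}); the adjoint kernel yields $T^*h(s)=\int_s^\infty h(z)\,dz$, and because $V(\infty)=\lim_{x\to\infty}\tfrac{r}{C_2}\Phi_2(C_2 x^{1/r})=\infty$ the inhomogeneous term in \Cref{Sw} drops out, leaving a two-weight $L^{q_1'}$-inequality on $M_+(\mathbb{R_+},m)$ for the kernel operator $\int_0^x T^*h=\int_0^x z h(z)\,dz+x\int_x^\infty h(z)\,dz$. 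Applying Stepanov's theorem \cite{Stp90} to each of these Hardy-type summands, after the change of variables $x\mapsto x^{1/r}$, $y\mapsto y^{1/r}$ used on the way to \eqref{eq:CONDITIONS 1,2 modified by Kerman}, produces a pair of pointwise conditions dual in structure to \eqref{eq:CONDITIONS 1,2 modified by Kerman}: the $\phi_2$-factor is now integrated over $(0,t)$ and the $\phi_1$-factor over $(t,\infty)$, with companion factors $(t^r-y^r)^{q_1'}$ and $(y^r-t^r)^{q_1}$ inserted in the appropriate places.

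The remaining step is to compress this pair of Stepanov-type conditions into the single condition \eqref{eq:cianchitypeq}; for this I would follow, verbatim in dualized form, the dyadic splitting used in the proof of \Cref{main result 1 class Wpr modified by Kerman}: split the relevant integrals at $2^{\pm 1/r}t$, exploit the lower bound $(y^r-t^r)\ge\tfrac12 y^r$ for $y\ge 2^{1/r}t$ (which, after raising to the power $q_1$, recovers the clean weight factor $y^{rq_1'}$ appearing in \eqref{eq:cianchitypeq}), and use the freedom -- carried over from the statement of \Cref{main result 1 class Wpr modified by Kerman} -- to freely enlarge the constant $C_2$ when needed. The reverse direction, yielding both Stepanov conditions from \eqref{eq:cianchitypeq}, follows immediately by dropping the factors $(t^r-y^r)^{q_1'}$ and $(y^r-t^r)^{q_1}$ and by elementary majorization. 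The principal technical obstacle I anticipate is precisely this dyadic bookkeeping at the last step, since the two Stepanov conditions must cooperate, after splitting, to reproduce the single factor $y^{rq_1'}$ of \eqref{eq:cianchitypeq}; the remainder is a direct dualization of the arguments of \Cref{main result 1 class Wpr modified by Kerman}. The quantitative comparison $C\asymp A+B$ then follows by tracking constants through \Cref{Calderon type theorem for Sqr}, \cite[Theorem A]{KRS17}, \Cref{Sw}, Stepanov's theorem, and \Cref{distributional estimate T in class W(11qr)}.
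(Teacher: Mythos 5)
Your high-level framework (reduce to $S_{q,r}$ via \Cref{Calderon type theorem for Sqr}, pass to the modular inequality, split off the $P$-part governed by the hypothesis \eqref{Zymund Stromberg for P in Sqr}, and characterize the $H_{q,r}$-part through the weighted Hardy inequality \eqref{eq:HDN'}) is in line with the paper. But the paper parts ways with your plan exactly at the step you single out as the main obstacle, and the structure you anticipate there is wrong. The paper does \emph{not} dualize \eqref{eq:HDN'II} with the kernel-duality result \Cref{Sw}, does not invoke Stepanov, and needs no dyadic splitting: it applies Sawyer's Theorem~2 (\Cref{Sawyer wtd Maximal function}), which is purpose-built for the Hardy averaging operator on nonincreasing functions, and that theorem delivers the characterization in one shot as the pair \eqref{First condition}--\eqref{Second condition}. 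After $s\mapsto s^r$, \eqref{First condition} collapses to $\Phi_1(t)\lesssim\Phi_2(C_2t)$, which is implied by \eqref{Zymund Stromberg for P in Sqr}, and \eqref{Second condition} is literally \eqref{eq:cianchitypeq}. No companion factors, no Riemann--Liouville operator, no compression.

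Even if you insist on routing through \Cref{Sw}, your prediction of what comes out is off. For the \emph{direct} Hardy operator $T$ you get $T^{*}h(s)=\int_s^\infty h$, and
\[
\int_0^x T^{*}h=\int_0^x z\,h(z)\,dz+x\int_x^\infty h(z)\,dz ,
\]
which is a sum of two weighted first-order Hardy operators --- \emph{not} the second-order Riemann--Liouville operator $I_2$ that arose for $H^{p,r}$ in \Cref{main result 1 class Wpr modified by Kerman}. Stepanov's theorem, whose conclusion carries the $(y^r-x^r)^{q_1'}$- and $(x^r-y^r)^{q_1}$-type companion factors you announce, is therefore the wrong tool; each summand is characterized by a plain Muckenhoupt condition, and these two Muckenhoupt conditions, after $x\mapsto x^{1/r}$, are precisely $\Phi_1\lesssim\Phi_2$ and \eqref{eq:cianchitypeq}. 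Consequently the ``dyadic bookkeeping'' you identify as the principal technical obstacle does not exist in this half of the argument: the splitting at $2^{\pm 1/r}t$ borrowed from \Cref{main result 1 class Wpr modified by Kerman} was needed there because one must reconcile two Stepanov conditions for $I_2$ into one Cianchi-type condition, a situation that has no analogue here. Your plan is salvageable if you replace ``Stepanov applied to $I_2$'' by ``Muckenhoupt applied to each Hardy summand,'' but as written it sends you after a phantom difficulty and misses the shorter route that \Cref{Sawyer wtd Maximal function} already provides.
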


\begin{proof} 
In view of \Cref{Ineq equi Mod Hqr}, we need necessary and sufficient conditions on the appropriate weights in order that the inequalities
\begin{equation}\label{eq:HDM'I}
 \int_{\mathbb{R_+}}     \int_{t}^{\infty} g(s)ds  {\textstyle\frac{\phi_1(t)}{t}  dt} 
\leq C_1  \int_{\mathbb{R_+}} g(t)  \phi_2 \left( C_2t \right) dt 
\end{equation}
and
\begin{align}\label{eq:HDN'II}
 \int_{\mathbb{R_+}}    \left( \int_{0}^{t} g(s)ds \right)^{q/r}  \textstyle\frac{\phi_1(t^{\frac{1}{r}})}{t^{\frac{q-1}{r}+1}  }  dt 
\leq C_1  \int_{\mathbb{R_+}} g(t)^{q/r}   \phi_2 \left( C_2t^{\frac{1}{r}} \right) t^{\frac{1}{r}-1}dt,
\end{align}
hold with $C_1,C_2>0$ independent of the nonnegative, nonincreasing functions $g$ on $\mathbb{R_+}$.

Interchanging the order of integration in the integral on the left side of (\ref{eq:HDM'I}) leads to the inequality
\begin{equation*}
 \int_{\mathbb{R_+}}      g(s) \int_0^s \frac{\phi_1(t)}{t}  dt \, ds
\leq C_1  \int_{\mathbb{R_+}} g(t)  \phi_2 \left( C_2t \right) dt.
\end{equation*}
The most general nonnegative, nonincreasing $g$ for which this latter inequality holds essentially has the form
\begin{equation*}
g(s)= \int_s^{\infty} h(y)dy, \ \ \ \text{for some} \ h \in M_+(\mathbb{R_+},m),
\end{equation*}
in which case (\ref{eq:HDM'I}) changes to
\begin{equation*}
 \int_{\mathbb{R_+}}   h(y) \int_0^y \left( \int_0^s \frac{\phi_1(t)}{t}  dt \right) \, ds \, dy 
\leq C_1  \int_{\mathbb{R_+}}  h(y) \int_0^y \phi_2 \left( C_2t \right) dt \, dy.
\end{equation*}
One readily shows this is satisfied if and only if one has (\ref{Zymund Stromberg for P in Sqr}).

As for the inequality, (\ref{eq:HDN'II}), Theorem $2$ of \cite{Sw90} shows it holds if and only if for $t \in \mathbb{R_+}$,
\begin{equation}\label{First condition}
 \int_0^t \phi_1(s^{\frac{1}{r}})s^{\frac{1}{r}-1} ds \leq A  \int_0^t \phi_2(C_2s^{\frac{1}{r}}) s^{\frac{1}{r}-1} ds
\end{equation}
and
\begin{equation}\label{Second condition}
\left( \int_t^{\infty} s^{-q_1} \phi_1(s^{\frac{1}{r}})s^{\frac{1}{r}-1} ds \right)^{\frac{1}{q_1}} \left( \int_0^t \left( s^{-1} \int_0^s \phi_2(C_2y^{\frac{1}{r}}) y^{\frac{1}{r}-1} dy \right)^{-q_1'} \phi_2(C_2s^{\frac{1}{r}}) s^{\frac{1}{r}-1} ds \right)^{\frac{1}{q_1'}} \leq B,
\end{equation}
hold, where $q_1= \frac{q}{r}$.

After suitable change of variable, (\ref{First condition}) reads
\begin{equation*}
\Phi_1(t^{\frac{1}{r}}) \leq \frac{A}{C_2} \Phi_2(C_2t^{\frac{1}{r}}),
\end{equation*}
or, on replacing $t^{\frac{1}{r}}$ by $t$,
\begin{equation*}
\Phi_1(t) \leq  \frac{A}{C_2} \Phi_2(C_2t).
\end{equation*}
But, this condition is implied by the Zygmund-Str\"omberg condition (\ref{Zymund Stromberg for P in Sqr}), which is one of our hypothesis.

The change of variable $s \rightarrow s^r$ in the left hand integral in (\ref{Second condition}) yields
\begin{equation*}
r \int_{t^{\frac{1}{r}}}^{\infty} s^{-q} \phi_1(s)ds
\end{equation*}
Again, as observed above, 
\begin{equation*}
\int_0^s \phi_2(C_2y^{\frac{1}{r}}) y^{\frac{1}{r}-1} dy = \frac{r}{C_2} \Phi_2(C_2s^{\frac{1}{r}}),
\end{equation*}
so that
\begin{align*}
\int_0^t \left( s^{-1} \int_0^s \phi_2(C_2y^{\frac{1}{r}}) \right. & \left.  y^{\frac{1}{r}-1} dy \right)^{-q_1'}  \phi_2(C_2s^{\frac{1}{r}}) s^{\frac{1}{r}-1} ds \\
& = \int_0^t \left(  \frac{rs^{-1}}{C_2} \Phi_2(C_2s^{\frac{1}{r}}) \right)^{-q_1'} \phi_2(C_2s^{\frac{1}{r}}) s^{\frac{1}{r}-1} ds \\
& =r \left( \frac{C_2}{r} \right)^{q_1'} \int_0^{t^{\frac{1}{r}}} \frac{\phi_2(C_2s)}{\Phi_2(C_2s)^{q_1'}}  s^{rq_1'} ds \\*
\end{align*}
Thus, (\ref{Second condition}) amounts to (\ref{eq:cianchitypeq}).
\end{proof}

\subsection{\texorpdfstring {The case $r \geq q$}{}}
Our result in this case is independent of $r$. It is given in
\begin{theorem}\label{main result for the Wqr class 2}
Let $(X,\mu)$ and $(Y,\nu)$ be $\sigma$-finite measure spaces with $\mu(X)=\nu(Y)=\infty$, the latter being nonatomic and separable. Fix the indices  $q$ and $r$,  $r \geq q>1$. Suppose $\Phi_{i}(t)= \int_0^t \phi_{i}(s)ds$, $i=1,2$, are Young functions satisfying Zygmund-Str\"omberg condition (\ref{Zymund Stromberg for P in Sqr}).  Then, the following are equivalent:  
\begin{enumerate}
\item
 To each $T \in W((1,1),(q,r);,\mu,\nu)$ there corresponds $C>0$ such that
\begin{equation}\label{eq: norm inequality for T in Wqr}
 \| Tf \|_{L_{\Phi_1}(Y, \nu)} \leq C \| f \|_{L_{\Phi_2}(X,\mu)},
\end{equation}
for all $f\in L_{\Phi_2}(X,\mu)$;
\item
There exist $A,B>0$ such that (\ref{eq:Sprcond}) holds.
\end{enumerate}
\end{theorem}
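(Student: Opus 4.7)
The plan is to follow the scheme of \Cref{main result 22 class Wpr}, adapted to the Calder\'on operator $S_{q,r}=P+H_{q,r}$. By \Cref{Calderon type theorem for Sqr}, \Cref{EquvlcHqrnormandmod}, and the fact that $S_{q,r}$ is dilation-commuting, statement (1) is equivalent to the modular inequality
\[
\int_{\mathbb{R_+}}\Phi_1(S_{q,r}f^*(t))\,dt\le\int_{\mathbb{R_+}}\Phi_2(Kf^*(s))\,ds.
\]
The direction (1)$\Rightarrow$(2) is then immediate from \Cref{Necessary condition for Sqr operator}, applied to this norm inequality for $S_{q,r}$.

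For (2)$\Rightarrow$(1), I would use the convexity bound $\Phi_1(S_{q,r}f^*(t))\le \Phi_1(2Pf^*(t))+\Phi_1(2H_{q,r}f^*(t))$ and handle each summand separately. The integral $\int\Phi_1(2Pf^*(t))\,dt$ is controlled by the hypothesis \eqref{Zymund Stromberg for P in Sqr} via Cianchi's theorem \cite{Ci99} characterising the boundedness of $P$ between Orlicz spaces, yielding a bound of the form $\int\Phi_2(K_1f^*(s))\,ds$.

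For the $H_{q,r}$ piece, the first step is to reduce to $r=q$. The identity $\|f^*\chi_{(t,\infty)}\|_{q,q}=t^{1/q}H_{q,q}f^*(t)$ together with the lower estimate
\[
\|f^*\chi_{(t,\infty)}\|_{q,r}^r=\tfrac{r}{q}\int_t^{\infty}(u-t)^{r/q-1}f^*(u)^r\,du\ge 2(r/q)\,t^{r/q}H_{q,r}f^*(2t)^r,
\]
obtained by restricting to $(2t,\infty)$ and using $(u-t)^{r/q-1}\ge (u/2)^{r/q-1}$ (which requires $r/q\ge 1$), combined with the Lorentz-norm monotonicity $\|g\|_{q,r}\le\|g\|_{q,q}$ from \eqref{Lorentz mono}, yields $H_{q,r}f^*(t)\le (q/2r)^{1/r}H_{q,q}f^*(t/2)\le H_{q,q}f^*(t/2)$. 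Hence $\int\Phi_1(2H_{q,r}f^*(t))\,dt\le 2\int\Phi_1(2H_{q,q}f^*(u))\,du$ after the substitution $u=t/2$.

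In the case $r=q$, using $H_{q,q}f^*\le S_{q,q}f^*$ and the distributional upper bound from \Cref{remark for upper bound of distribution of Sqrf} (with $r=q$), I would rewrite $\int\Phi_1(H_{q,q}f^*)\,du\le\int\phi_1(t)m_{S_{q,q}f^*}(t)\,dt$, apply Fubini, and split the result into a ``$P$-piece'' involving $\int_0^{\beta s}\phi_1(t)/t\,dt$ and an ``$H_{q,q}$-piece'' involving $\int_{\beta s}^{\infty}\phi_1(t)/t^q\,dt$. Integration by parts turns these into $\int_0^{\beta s}\Phi_1(t)/t^2\,dt$ and $\int_{\beta s}^{\infty}\Phi_1(t)/t^{q+1}\,dt$ (modulo boundary terms), which the hypotheses \eqref{Zymund Stromberg for P in Sqr} and \eqref{eq:Sprcond} dominate by expressions of the form $\Phi_2(K's)/s$; Young's inequality \eqref{eq:ygp} then yields a final bound $\int\Phi_2(K'f^*(s))\,ds$ after invoking the identity $\int\Phi_2(Kg(s))\,ds=K\int m_g(s)\phi_2(Ks)\,ds$. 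The principal obstacle is the reduction $r\ge q\Rightarrow r=q$, which relies on correctly localising the integral defining $\|f^*\chi_{(t,\infty)}\|_{q,r}^r$ to $(2t,\infty)$ so as to extract the right lower bound; once that is secured, the remaining Str\"omberg-style calculation closely parallels the one in the proof of \Cref{main result 22 class Wpr}.
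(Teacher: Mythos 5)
Your proposal is correct and follows the same Str\"omberg-style strategy as the paper's proof, with a few genuinely different technical choices that are worth flagging. For the reduction of $r\geq q$ to $r=q$, the paper uses \Cref{sufficient condition Hqr r greater than q}, which performs the change of variable $s=u^{q/r}$ and exploits $(t+u)^{q/r}\geq 2^{q/r-1}(t^{q/r}+u^{q/r})$ together with the monotonicity of $f^*$, arriving at $H_{q,r}f^*(t)\leq (q/r)^{1/r}H_{q,q}f^*(2^{q/r-1}t)$. You obtain the analogous domination $H_{q,r}f^*(t)\leq H_{q,q}f^*(t/2)$ directly from the Lorentz-norm monotonicity \eqref{Lorentz mono} applied to the truncation $f^*\chi_{(t,\infty)}$, combined with a localisation of the integral to $(2t,\infty)$ to exploit $r/q\geq 1$; this is a valid alternative and both routes yield the same kind of pointwise bound with a fixed dilation. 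You also split $\Phi_1(S_{q,r}f^*)\leq\Phi_1(2Pf^*)+\Phi_1(2H_{q,r}f^*)$ by convexity and dispatch the $P$-piece by appealing to Cianchi's characterisation of the boundedness of $P$ (legitimate, since $P$ is dilation-commuting and the norm and modular inequalities are interchangeable via \cite{KRS17}); the paper instead estimates $\int\Phi_1(S_{q,q}f^*)$ in a single pass from the two-term distributional bound of \Cref{LDE for generalized Calderon operator} and then applies Fubini and the two hypotheses \eqref{Zymund Stromberg for P in Sqr}, \eqref{eq:Sprcond} to the resulting terms.

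One small technical caution: in the Str\"omberg calculation you propose integrating by parts to convert $\int_0^{\beta s}\phi_1(t)/t\,dt$ and $\int_{\beta s}^{\infty}\phi_1(t)/t^q\,dt$ into the $\Phi_1$-integrals; this produces boundary terms (e.g.\ $\Phi_1(\beta s)/(\beta s)$) that must then be absorbed — that absorption is possible, since \eqref{Zymund Stromberg for P in Sqr} implies $\Phi_1(t)\leq\Phi_2(2At)$, but it is an extra step. The paper avoids this entirely by using \eqref{eq:ygp} in the form $\phi_1(t)/t\leq\Phi_1(2t)/t^2$ and $\phi_1(t)/t^q\leq\Phi_1(2t)/t^{q+1}$, which dispenses with boundary terms and is cleaner; I would recommend that substitution. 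With that minor adjustment your argument is complete and parallel in substance to the one in the paper.
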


The result for $r \geq q$ can be reduced to the case of $r=q$. This is a consequence of

\begin{proposition}\label{sufficient condition Hqr r greater than q}
Given $r \geq q$, there exists a constant $K>0$, depending on $r$, such that 
\begin{equation*}
(S_{q,r} f^*) (t) \leq K (S_{q,q} f^*) (2^{\frac{q}{r}-1}t),
\end{equation*}
for all $f \in M(\mathbb{R_+},m)$ and all $t \in \mathbb{R_+}$.
\end{proposition}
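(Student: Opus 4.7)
The plan is to treat the two pieces of $S_{q,r}=P+H_{q,r}$ separately. For the $P$ piece, note that $Pf^{*}=f^{**}$ is nonincreasing, so since $c=2^{q/r-1}\le 1$ we get $(Pf^{*})(t)\le (Pf^{*})(ct)$ at no cost. The substance of the argument is therefore to establish a pointwise bound $(H_{q,r}f^{*})(t)\le K_{0}(q,r)(H_{q,q}f^{*})(ct)$.

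To prove this, I would decompose the defining integral dyadically,
\[
\int_{t}^{\infty}f^{*}(s)^{r}s^{r/q-1}\,ds=\sum_{k=0}^{\infty}\int_{2^{k}t}^{2^{k+1}t}f^{*}(s)^{r}s^{r/q-1}\,ds,
\]
and on each annulus apply $f^{*}(s)\le f^{*}(2^{k}t)$ together with $s^{r/q-1}\le (2^{k+1}t)^{r/q-1}$; the latter monotonicity is exactly where $r\ge q$ is used. Collecting powers of $2$ produces
\[
(H_{q,r}f^{*})(t)^{r}\le 2^{r/q-1}\sum_{k=0}^{\infty}\bigl(2^{k}f^{*}(2^{k}t)^{q}\bigr)^{r/q}.
\]
Next invoke the elementary embedding $\ell^{1}\hookrightarrow \ell^{r/q}$ (valid since $r/q\ge 1$) in the form $\sum a_{k}^{r/q}\le (\sum a_{k})^{r/q}$ and take an $r$-th root to obtain
\[
(H_{q,r}f^{*})(t)\le 2^{1/q-1/r}\Bigl(\sum_{k=0}^{\infty}2^{k}f^{*}(2^{k}t)^{q}\Bigr)^{1/q}.
\]

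It then remains to dominate the dyadic sum by a multiple of $(H_{q,q}f^{*})(ct)^{q}$. For $k\ge 1$ the comparison $f^{*}(u)\ge f^{*}(2^{k}t)$ on $[2^{k-1}t,2^{k}t]$ gives $2^{k}f^{*}(2^{k}t)^{q}\le (2/t)\int_{2^{k-1}t}^{2^{k}t}f^{*}(u)^{q}du$, so telescoping yields $\sum_{k\ge 1}2^{k}f^{*}(2^{k}t)^{q}\le 2(H_{q,q}f^{*})(t)^{q}\le 2(H_{q,q}f^{*})(ct)^{q}$; the last step uses that $s\mapsto s^{-1}\int_{s}^{\infty}f^{*}(u)^{q}du$ is nonincreasing, a one-line computation. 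For the isolated $k=0$ term, since $ct\le t$,
\[
(H_{q,q}f^{*})(ct)^{q}\ge (ct)^{-1}\int_{ct}^{t}f^{*}(s)^{q}\,ds\ge \frac{1-c}{c}\,f^{*}(t)^{q},
\]
so $f^{*}(t)^{q}\le \tfrac{c}{1-c}(H_{q,q}f^{*})(ct)^{q}$. Combining all the pieces gives the pointwise inequality with an explicit constant $K=1+2^{1/q-1/r}\bigl(\tfrac{c}{1-c}+2\bigr)^{1/q}$, and the case $r=q$ is trivial with $K=1$.

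The one subtlety worth flagging is the $k=0$ contribution: it cannot be absorbed by $(H_{q,q}f^{*})(t)^{q}$, only by the strictly larger $(H_{q,q}f^{*})(ct)^{q}$, and this is precisely what forces the evaluation point to shift from $t$ to $ct$ in the conclusion. The factor $c/(1-c)$ blows up as $r\downarrow q$, which is why the constant $K$ is permitted to depend on $r$; the specific value $c=2^{q/r-1}$ plays no distinguished role in the proof---any $c\in [1/2,1)$ would suffice.
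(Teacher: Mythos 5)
Your proof is correct, and it takes a genuinely different route from the paper's. The paper's argument is a continuous one: it makes the substitution $s=u^{q/r}$, which recasts $(H_{q,r}f^*)(t^{q/r})^r$ as $\frac{q}{r}\,t^{-1}\int_t^\infty f^*(u^{q/r})^r\,du$; it then invokes the concavity of $x\mapsto x^{q/r}$ (here $r\ge q$ enters) to replace $f^*\bigl((t+u)^{q/r}\bigr)$ by $f^*\bigl(2^{q/r-1}t^{q/r}+2^{q/r-1}u^{q/r}\bigr)$, recognizes the resulting integral as the $L_{q,r}$-Lorentz norm of the translate $f^*(\cdot+\gamma t^{q/r})$ with $\gamma=2^{q/r-1}$, and finishes with the embedding $\|\cdot\|_{q,r}\le\|\cdot\|_{q,q}$ from the paper's display (2.10). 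Your argument discretizes instead: you split $\int_t^\infty$ dyadically, use $r\ge q$ to control $s^{r/q-1}$ on each annulus, and appeal to $\ell^1\hookrightarrow\ell^{r/q}$ — the discrete face of the same Lorentz monotonicity — to collapse the sum, then reconstruct $(H_{q,q}f^*)(ct)$ by telescoping. Your route buys two things the paper leaves implicit: it treats the $P$-summand of $S_{q,r}$ explicitly (via the monotonicity of $f^{**}$), and it isolates exactly why the evaluation point must shift to $ct$ — the $k=0$ term $f^*(t)^q$ is absorbed by $(H_{q,q}f^*)(ct)^q$ only because $c<1$, producing the $c/(1-c)$ factor that blows up as $r\downarrow q$. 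The paper's route is shorter but hides the translate inside a Lorentz-norm identification and merely asserts at the outset that it ``suffices'' to compare $H_{q,r}$ with $H_{q,q}$, when in fact the shifted version is what is actually derived.
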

\begin{proof}
It suffices to verify the inequality
\begin{equation*}
(H_{q,r} f^*) (t) \leq K (H_{q,q} f^*) (t),
\end{equation*}
for all $f \in M(\mathbb{R_+},m)$ and all $t \in \mathbb{R_+}$.  Recall,
\begin{equation*}
(H_{q,r} f^* )(t)= \left( t^{-\frac{r}{q}} \int_t^{\infty} f^*(s)^r s^{\frac{r}{q}-1}ds \right)^{\frac{1}{r}}    .
\end{equation*}
Letting $s=u^{\frac{q}{r}}$, we get  
\begin{equation*}
(H_{q,r} f^* )(t)= \left( \left( \frac{q}{r} \right) t^{-\frac{r}{q}} \int_{t^{\frac{r}{q}}}^{\infty} f^*(u^{\frac{q}{r}})^r du \right)^{\frac{1}{r}} 
\end{equation*}
or
\begin{equation*}
\left[  (H_{q,r} f^*) (t^{\frac{q}{r}}) \right]^r=  \left( \frac{q}{r} \right) t^{-1}  \int_{t}^{\infty} f^*(u^{\frac{q}{r}})^r du.
\end{equation*}
Now,
\begin{equation*}
\begin{split}
\left[  (H_{q,r} f^*) (t^{\frac{q}{r}}) \right]^r & =  \left( \frac{q}{r} \right) t^{-1}  \int_{\mathbb{R_+}}  f^* \left( (t+u)^{\frac{q}{r}} \right)^r du\\
& \leq \left( \frac{q}{r} \right) t^{-1}  \int_{\mathbb{R_+}}  f^* \left(2^{\frac{q}{r}-1} t^{\frac{q}{r}} + 2^{\frac{q}{r}-1}u^{\frac{q}{r}} \right)^r du  \ \ \ (\text{since} \, r \geq q)   \\
& = \gamma^{-\frac{r}{q}}  t^{-1}  \int_{\mathbb{R_+}}  f^* \left( x + \gamma t^{\frac{q}{r}}  \right)^r x^{\frac{r}{q}-1} dx,
\end{split}
\end{equation*}
where $\gamma=2^{\frac{q}{r}-1}$.

Thus,
\begin{equation*}
\begin{split}
(H_{q,r} f^*) (t^{\frac{q}{r}}) & \leq \gamma^{-\frac{1}{q}} {\left( \textstyle\frac{q}{r} \right)^{\frac{1}{r}}} t^{-\frac{1}{r}} \| f^*(\cdot + \gamma t^{\frac{q}{r}}) \|_{L_{q,r}(\mathbb{R_+},m)}\\
& \leq \gamma^{-\frac{1}{q}} {\left( \textstyle\frac{q}{r} \right)^{\frac{1}{r}}} t^{-\frac{1}{r}} \| f^*(\cdot +  \gamma t^{\frac{q}{r}}) \|_{L_{q,q}(\mathbb{R_+},m)}\\
& = \gamma^{-\frac{1}{q}} {\left( \textstyle\frac{q}{r} \right)^{\frac{1}{r}}} t^{-\frac{1}{r}} \left(  \int_{\gamma t^{\frac{q}{r}}}^{\infty}  f^* \left(  s \right)^q  ds \right)^{\frac{1}{q}}.
\end{split}
\end{equation*}
Replacing $t$ by $t^{r/q}$ yields

\begin{equation*}
(H_{q,r} f^*) (t) \leq  {\left( \textstyle\frac{q}{r} \right)^{\frac{1}{r}}} { ( \gamma t )}^{-\frac{1}{q}} \left(  \int_{\gamma t}^{\infty}  f^* \left(  s \right)^q  ds \right)^{\frac{1}{q}} =  {\left( \textstyle\frac{q}{r} \right)^{\frac{1}{r}}}  (H_{q,q} f^* )(\gamma t).
\end{equation*}
\end{proof}

\begin{proof}[Proof of \Cref{main result for the Wqr class 2}]
In view of Theorems \ref{Calderon type theorem for Sqr} and \ref{EquvlcHqrnormandmod},
% and \ref{Necessary condition for Sqr operator} together with \Cref{sufficient condition Hqr r greater than q}, it suffices to prove (\ref{eq:Sprcond}) implies
the assertion in $1$ is equivalent to the requirement that
\begin{equation}\label{modular inequality for Sqr required for final one for r bigger than q}
 \int_{\mathbb{R_+}}\Phi_1(S_{q,r}f^*(t))dt \leq \int_{\mathbb{R_+}}\Phi_2(Kf^*(s))ds,
\end{equation}
for all $f \in M(\mathbb{R_+},m)$.

Suppose first that (\ref{modular inequality for Sqr required for final one for r bigger than q}) holds. This, together with \Cref{sufficient condition Hqr r greater than q}, shows $2$ holds, given $1$.

Assume, next, we have $2$. According to \Cref{LDE for generalized Calderon operator},
\begin{align*}
\int_{\mathbb{R_+}} \Phi_1 &(S_{q,q}f^*(t))dt  = \int_{\mathbb{R_+}} \phi_1(t)m_{S_{q,q}f^*}(t)dt\\*
& \leq E \left[ \int_{\mathbb{R_+}} \phi_1(t) \left( \frac{t}{ \beta} \right)^{-1}      \int_{\frac{t}{\beta}}^{\infty} m_{f^*}(\lambda) d \lambda \,  dt   + \int_{\mathbb{R_+}}   \frac{\phi_1(t)}{(t/ \beta)^q}    \int_0^{\frac{t}{\beta}} m_{f^*}(\lambda) \lambda^{q-1} d \lambda \, dt    \right] .
\end{align*}
To begin with the first term in the last expression, we have
\begin{align*}
\int_{\mathbb{R_+}} \phi_1(t) \left( \frac{t}{\beta} \right)^{-1}      \int_{\frac{t}{\beta}}^{\infty} m_{f^*}(\lambda) d \lambda \,  dt 
& = \beta \int_{\mathbb{R_+}} m_{f^*}(\lambda) \int_0^{\beta \lambda} \frac{\phi_1(t)}{t}dt \, d \lambda \\
& \leq \beta \int_{\mathbb{R_+}} m_{f^*}(\lambda) \int_0^{\beta \lambda} \frac{\Phi_1(2t)}{t^2}dt \, d \lambda \\
& =  \int_{\mathbb{R_+}} m_{f^*}(\lambda / 2 \beta) \int_0^{ \lambda} \frac{\Phi_1(t)}{t^2}dt \, d \lambda \\
& \leq A \int_{\mathbb{R_+}} m_{2 \beta f^*}(\lambda) \frac{\Phi_2(B \lambda)}{ \lambda} \, d \lambda \\
& \leq A \int_{\mathbb{R_+}} \Phi_2( 2 \beta Bf^*(t)) \, d t.
\end{align*}
Finally,
\begin{align*}
\int_{\mathbb{R_+}}   \frac{\phi_1(t)}{(t/ \beta)^q}    \int_0^{\frac{t}{\beta}} m_{f^*}(\lambda) \lambda^{q-1} d \lambda \, dt  
& = \beta^q \int_{\mathbb{R_+}} m_{f^*}(\lambda) \lambda^{q-1} \int_{\beta \lambda}^{\infty} \frac{\phi_1(t)}{t^q}dt \, d \lambda \\
& \leq \beta^q \int_{\mathbb{R_+}} m_{f^*}(\lambda) \lambda^{q-1} \int_{\beta \lambda}^{\infty} \frac{\Phi_1(2t)}{t^{q+1}}dt \, d \lambda \\
& =  \int_{\mathbb{R_+}} m_{f^*}(\lambda / 2\beta) \lambda^{q-1} \int_{ \lambda}^{\infty} \frac{\Phi_1(t)}{t^{q+1}}dt \, d \lambda \\
& \leq A \int_{\mathbb{R_+}} m_{2 \beta f^*}(\lambda) \frac{\Phi_2(B \lambda)}{ \lambda} \, d \lambda \\
& \leq A \int_{\mathbb{R_+}} \Phi_2(2 \beta Bf^*(t)) \, d t.
\end{align*}

\end{proof}

\section{Interpolation pairs of Orlicz spaces for the class $W((p_0,r_0),(p_1 ,r_1);\mu,\nu)$}\label{Class W(p0r0,p1r1)}

%This chapter describes the interpolation results for the class $W((p_0,r_0),(p_1 ,r_1);\mu,\nu)$ which are to be obtained out of the ones for the classes $W((p_0,r_0),(\infty ,\infty);\mu,\nu)$ and  $W((1,1),(p_1 ,r_1);\mu,\nu)$. The later two classes have been dealt with in the Chapter 4 and Chapter 5  respectively.
%
%Further, we give some comparison of conditions corresponding to restricted weak types and weak types.

%%%%%The next Calder\'on type theorem says that it suffices to characterize the boundedness of the Calder\'on operators $S$ between Orlicz spaces $L_{\Phi_2}(\mathbb{R_+},m)$ and $L_{\Phi_1}(\mathbb{R_+},m)$.
%%%%%\begin{theorem}\label{Calderon type theorem for S}
%%%%%Let $1 \leq p  < q < \infty$, $1 \leq r_1,r_2 < \infty$, and $(X,\mu)$ and $(Y,\nu)$ are $\sigma$-finite nonatomic measure spaces. Then the  following are equivalent:
%%%%%\begin{enumerate}
%%%%%\item[(1)] Every operator $T$ in the class $W((1,1),(q,r);\mu,\nu)$ is bounded from $L_{\Phi_2}(X,\mu)$ to $L_{\Phi_1}(Y,\nu)$;
%%%%%\item[(2)] The operator $S_{q,r}$ is bounded from $L_{\Phi_2}(\mathbb{R_+},m)$ to $L_{\Phi_1}(\mathbb{R_+},m)$.
%%%%%\end{enumerate}
%%%%%\end{theorem}

\subsection{A Calder\'on-type theorem}

%We begin with the observation that every operator $T$ in the class $W((p_0,r_0),(p_1 ,r_1);\mu,\nu)$, is  dominated by a Calder\'on operator $H^{p_0,r_0}+ H_{p_1,r_1}$ in the following sense

\begin{theorem}\label{Dominance of Calderon operator for joined class}
Let $(X,\mu)$ and $(Y,\nu)$ be $\sigma$-finite measure spaces. Fix the indices $p_0,p_1, r_0$ and $r_1$, $1 < p_0 < p_1 < \infty$ and $ 1 \leq r_1 , r_2 < \infty $. Then, given $T \in W((p_0,r_0),(p_1 ,r_1);\mu,\nu)$, one has
\begin{equation}\label{6eq:Spr1qr2}
(Tf)^*(t)\leq K [ (H^{p_0,r_0} + H_{p_1,r_1}) f^* ](ct),
\end{equation}
in which $K=K(T)>0$ and $c=c(T)>0$ are independent of  $f \in (L_{p_0,r_0}+L_{p_1,r_1})(X,\mu)$ and of $t \in \mathbb{R_+}$.
\end{theorem}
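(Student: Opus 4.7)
I would follow the pattern of the classical Calder\'on decomposition argument, as used in the proof of \Cref{Hprdominance}, but with \emph{both} endpoints now being weak-type rather than one weak-type and one $L_\infty$. Fix $t\in\mathbb{R_+}$ and $f\in(L_{p_0,r_0}+L_{p_1,r_1})(X,\mu)$, and split $f=f_0+f_1$ at the level $f^*(t)$ by setting $f_1(x):=\min(|f(x)|,f^*(t))\,\mathrm{sgn}\,f(x)$ and $f_0:=f-f_1$. Then \Cref{membership of splitted parts} gives $f_0\in L_{p_0,r_0}$ and $f_1\in L_{p_1,r_1}$; moreover $f_0^*(s)=[f^*(s)-f^*(t)]^+$ vanishes for $s\ge t$ and is bounded by $f^*(s)$ on $(0,t)$, while $f_1^*(s)=\min(f^*(s),f^*(t))$.

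By the $r$-quasilinearity of $T$,
\[
(Tf)^*(t)\le C[(Tf_0)^*(ct)+(Tf_1)^*(ct)],
\]
so the task is to estimate the two summands separately. For the large part, the weak-type $(p_0,r_0)$ inequality for $T$ combined with
\[
\|f_0\|_{p_0,r_0}^{r_0}\le \tfrac{r_0}{p_0}\int_0^{t}f^*(s)^{r_0}s^{r_0/p_0-1}\,ds=\tfrac{r_0}{p_0}\,t^{r_0/p_0}(H^{p_0,r_0}f^*)(t)^{r_0}
\]
gives $(Tf_0)^*(ct)\le K_0\,(H^{p_0,r_0}f^*)(t)$, with a constant depending on $C_{p_0,r_0}(T)$, $c$, $p_0$ and $r_0$. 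For the small part, the explicit form of $f_1^*$ yields
\[
\|f_1\|_{p_1,r_1}^{r_1}=\bigl(f^*(t)\,t^{1/p_1}\bigr)^{r_1}+\tfrac{r_1}{p_1}\bigl(t^{1/p_1}(H_{p_1,r_1}f^*)(t)\bigr)^{r_1}.
\]
Since $r_1\ge 1$, $x\mapsto x^{1/r_1}$ is subadditive, so
\[
\|f_1\|_{p_1,r_1}\le t^{1/p_1}\Bigl[f^*(t)+\bigl(\tfrac{r_1}{p_1}\bigr)^{1/r_1}(H_{p_1,r_1}f^*)(t)\Bigr],
\]
and the weak-type $(p_1,r_1)$ bound then delivers $(Tf_1)^*(ct)\le K_1\,[f^*(t)+(H_{p_1,r_1}f^*)(t)]$.

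The residual $f^*(t)$ is disposed of by the elementary estimate $f^*(t)\le (r_0/p_0)^{1/r_0}(H^{p_0,r_0}f^*)(t)$, which appears as \eqref{eq:Idd} in the proof of \Cref{Hprdominance} and follows immediately from $f^*$ being nonincreasing. Collecting terms,
\[
(Tf)^*(t)\le K\bigl[(H^{p_0,r_0}f^*)(t)+(H_{p_1,r_1}f^*)(t)\bigr].
\]
A change of variable $s=tu$ in the definitions shows that both $H^{p_0,r_0}f^*$ and $H_{p_1,r_1}f^*$ are nonincreasing in $t$, so for $c\le 1$ the right-hand side is dominated by $K\bigl[(H^{p_0,r_0}+H_{p_1,r_1})f^*\bigr](ct)$, which is the desired inequality.

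The main obstacle is the boundary term $f^*(t)^{r_1}t^{r_1/p_1}$ appearing in $\|f_1\|_{p_1,r_1}^{r_1}$: because $H_{p_1,r_1}$ integrates only over $[t,\infty)$, this piece cannot be folded into $(H_{p_1,r_1}f^*)(t)$ directly. The resolution---split the $r_1$th root using $r_1\ge 1$ and then absorb the resulting $f^*(t)$ into the \emph{companion} operator $(H^{p_0,r_0}f^*)(t)$---is precisely what forces the dominating operator on the right to be the sum of the two Calder\'on operators.
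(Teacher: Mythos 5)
Your proof is correct and follows the same Calder\'on decomposition strategy as the paper: split $f=f_0+f_1$ at the level $f^*(t)$, apply $r$-quasilinearity, and invoke the two weak-type bounds. The only mechanical difference is how the boundary term $f^*(t)t^{1/p_1}$ in $\|f_1\|_{p_1,r_1}$ is absorbed: the paper folds it into $\int_{ct}^{t}f^*(s)^{r_1}s^{r_1/p_1-1}\,ds$, obtaining $(H_{p_1,r_1}f^*)(ct)$ directly, whereas you peel it off via subadditivity of $x\mapsto x^{1/r_1}$, dominate it by $(H^{p_0,r_0}f^*)(t)$ using \eqref{eq:Idd}, and then downgrade $t$ to $ct$ at the end by the monotonicity of both Calder\'on operators — an equivalent and equally valid bookkeeping.
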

 \begin{proof}
 Let $ f\in(L_{p_0,r_0}+L_{p_1,r_1})(X,\mu)$ and fix $t \in \mathbb{R_+}$.
At $x \in X$, set
\begin{equation*}
   f_1(x)= \min[|f(x)|, f^*(t)] \, \text{sgn}f(x)
\end{equation*}
and 
\begin{equation*}
f_0(x)=f(x)-f_1(x)=[|f(x)|-f^*(t)]^+ \, \text{sgn}f(x).
\end{equation*}
Then, $f= f_0 + f_1$ and for all $s \in \mathbb{R_+}$
\begin{equation*}
f^*_0(s)= [f^*(s)-f^*(t)]^+,
\end{equation*}
\begin{equation*}
f_1^*(s)= \min (f^*(s),f^*(t)).
\end{equation*}
%Clearly $f_1 \in L_{\infty}(X,\mu)$; moreover, $f_0 \in L_{p,r}(X,\mu)$. The justification of the latter goes as follows. Observe that,
%\begin{equation}\label{6eq:lpr}
%\begin{split}
%\parallel f_0 \parallel_{L_{p,r}(X,\mu)}^r & = \int_0^{\infty} \left( s^{\frac{1}{p}}f_0^*(s) \right)^r \frac{ds}{s} \\
%& = \int_0^t (f^*(s)-f^*(t))^r s^{\frac{r}{p}-1} ds \\
%& \leq \int_0^t f^*(s)^r s^{\frac{r}{p}-1} ds.
%\end{split}
%\end{equation}
%Since $ f \in (L_{p,r}+L_{\infty})(X,\mu) $, there exist $g \in L_{p,r}(X,\mu) $ and $ h \in L_{\infty}(X,\mu)$ such that $f=g+h$. From (\ref{eq:subdr}),
%\begin{equation}\label{6eq:inq1}
%\begin{split}
%f^*(s)^r =(g+h)^*(s)^r & = [(g+h)^* \left( s/2 + s/2 \right)]^r\\
%& \leq  2^{r-1}[g^*(s/2)^r + h^*(s/2)^r],
%\end{split}
%\end{equation}
%whence, using (\ref{6eq:inq1}), we have that
%\begin{equation}\label{6eq:lpr1}
%\begin{split}
%\int_0^t f^*(s)^r s^{\frac{r}{p}-1} & \leq 2^{r-1}\int_0^t  \left[ g^*(s/2)^r + h^*(s/2)^r \right] s^{\frac{r}{p}-1}ds \\
%& = 2^{\frac{r}{p}+r-1}\int_0^{t/2}  \left[ g^*(s)^r + h^*(s)^r \right] s^{\frac{r}{p}-1}ds \\
%& = 2^{\frac{r}{p}+r-1} \left( \int_0^{t/2} g^*(s)^r s^{\frac{r}{p}-1}ds + \int_0^{t/2} h^*(s)^r s^{\frac{r}{p}-1}ds \right) \\
%& \leq 2^{\frac{r}{p}+r-1} \left( \parallel g \parallel_{L_{p,r}(X,\mu)}^r + \left( \frac{p}{r} \right) \parallel h \parallel_{L_{\infty}(X,\mu)}^r  t^{\frac{r}{p}} \right).
%\end{split}
%\end{equation}
%Now from (\ref{6eq:lpr}), (\ref{6eq:lpr1}) and the fact $g \in L_{p,r}(X,\mu) $ and $ h \in L_{\infty}(X,\mu)$, we get that $f_0 \in L_{p,r}(X,\mu)$.

Moreover, as shown in \Cref{membership of splitted parts}, $f_0 \in L_{p_0,r_0}(X,\mu)$ and $f_1 \in L_{p_1,r_1}(X,\mu)$.

So, if $T$ has $r$-quasilinearity constants $C$ and $c$ (see (\ref{nu quasilinear}), p. \pageref{nu quasilinear}),
\begin{equation*}
\begin{split}
(Tf)^*(t) & \leq C [(Tf_0)^*(ct)+(Tf_1)^*(ct)]\\
& \leq C \left[ (ct)^{-\frac{1}{p_0}}M_{p_0,r_0} \| f_0 \|_{L_{p_0,r_0}(X,\mu)} + (ct)^{-\frac{1}{p_1}}M_{p_1,r_1} \| f_1 \|_{L_{p_1,r_1}(X,\mu)} \right],
\end{split}
\end{equation*}
in which $M_{p_i,r_i}$ is the norm of $T$ as a mapping from $L_{p_i,r_i}(X,\mu)$ to $L_{p_i,\infty}(Y,\nu)$, $i=0,1$.

Now,
\begin{equation}\label{6eq:lpr}
\begin{split}
\parallel f_0 \parallel_{L_{p_0,r_0}(X,\mu)}
 & = \parallel f_0^* \parallel_{L_{p_0,r_0}(\mathbb{R_+},m)}\\
& = {\left( \textstyle\frac{r_0}{p_0} \right)^{\frac{1}{r_0}}} \left( \int_0^t (f^*(s)-f^*(t))^{r_0} s^{\frac{r_0}{p_0}-1} ds \right)^{\frac{1}{r_0}} \\
& \leq {\left( \textstyle\frac{r_0}{p_0} \right)^{\frac{1}{r_0}}} \left( \int_0^t f^*(s)^{r_0} s^{\frac{r_0}{p_0}-1} ds \right)^{\frac{1}{r_0}} \\
& \leq {\left( \textstyle\frac{r_0}{p_0} \right)^{\frac{1}{r_0}}}   c^{-\frac{1}{r_0}} \left( \int_0^{ct} f^*(s)^{r_0} s^{\frac{r_0}{p_0}-1} ds \right)^{\frac{1}{r_0}}
\end{split}
\end{equation}
and
\begin{align*}
\parallel f_1 \parallel_{L_{p_1,r_1}(X,\mu)}
 & = \parallel f_1^* \parallel_{L_{p_1,r_1}(\mathbb{R_+},m)}\\*
& = {\left( \textstyle\frac{r_1}{p_1} \right)^{\frac{1}{r_1}}}  \left( \int_0^t f^*(t)^{r_1} s^{\frac{r_1}{p_1}-1} ds +\int_t^{\infty} f^*(s)^{r_1} s^{\frac{r_1}{p_1}-1} ds \right)^{\frac{1}{r_1}} \\*
& = {\left( \textstyle\frac{r_1}{p_1} \right)^{\frac{1}{r_1}}}   \left( {\textstyle\frac{p_1}{r_1}} f^*(t)^{r_1} t^{\frac{r_1}{p_1}}  +\int_t^{\infty} f^*(s)^{r_1} s^{\frac{r_1}{p_1}-1} ds \right)^{\frac{1}{r_1}} \\*
& \leq  {\left( \textstyle\frac{r_1}{p_1} \right)^{\frac{1}{r_1}}}   \left( \left( 1-c^{\frac{r_1}{p_1}} \right)^{-1} \int_{ct}^{t} f^*(s)^{r_1} s^{\frac{r_1}{p_1}-1}ds  +\int_t^{\infty} f^*(s)^{r_1} s^{\frac{r_1}{p_1}-1} ds \right)^{\frac{1}{r_1}} \\*
& \leq  {\left( \textstyle\frac{r_1}{p_1} \right)^{\frac{1}{r_1}}}    \left( 1-c^{\frac{r_1}{p_1}} \right)^{-\frac{1}{r_1}} \left( \int_{ct}^{\infty} f^*(s)^{r_1} s^{\frac{r_1}{p_1}-1} ds \right)^{\frac{1}{r_1}}.
\end{align*}
Altogether, then, 
\begin{equation*}
(Tf)^*(t)  \leq K \left[ \left( H^{p_0,r_0} + H_{p_1,r_1} \right)f^*\right](ct),
\end{equation*}
with $K=  \left( {\left( \textstyle\frac{r_0}{p_0} \right)^{\frac{1}{r_0}}}   c^{-\frac{1}{r_0}}+ {\left( \textstyle\frac{r_1}{p_1} \right)^{\frac{1}{r_1}}}    \left( 1-c^{\frac{r_1}{p_1}} \right)^{-\frac{1}{r_1}} \right) \left( M_{p_0,r_0} + M_{p_1,r_1} \right)C$.
\end{proof}

\begin{theorem}\label{Calderon type theorem for the joined class}
Fix the indices $p_0,p_1,r_0$ and $r_1$, $1 < p_0 <p_1 < \infty$, $1 \leq r_0,r_1 <\infty$.  Let  $(X,\mu)$ and $(Y,\nu)$ be $\sigma$-finite  measure spaces, with $\mu(X)=\nu(Y)=\infty$ and $(Y,\nu)$ being  nonatomic and separable. Assume $\Phi_{i}(t)= \int_0^t \phi_{i}(s)ds$, $i=1,2$, are Young functions.  Then, the  following are equivalent:
\begin{enumerate}
\item[(1)] Every operator $T \in  W((p_0,r_0),(p_1,r_1);\mu,\nu)$ maps $L_{\Phi_2}(X,\mu)$ boundedly into $L_{\Phi_1}(Y,\nu)$;
\item[(2)] The operator $H^{p_0,r_0} + H_{p_1,r_1}$ maps the nonincreasing functions in   $L_{\Phi_2}(\mathbb{R_+},m)$ boundedly into $L_{\Phi_1}(\mathbb{R_+},m)$.
\end{enumerate}
\end{theorem}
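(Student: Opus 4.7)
The plan is to mimic closely the argument used in \Cref{Calderon theorem} and \Cref{Calderon type theorem for Sqr}, adapting it to the joint Calder\'on operator $S:=H^{p_0,r_0}+H_{p_1,r_1}$.

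First I would prove $(2)\Rightarrow(1)$. Given $T\in W((p_0,r_0),(p_1,r_1);\mu,\nu)$, \Cref{Dominance of Calderon operator for joined class} yields
\begin{equation*}
(Tf)^{*}(t)\leq K\,[Sf^{*}](ct),\qquad t\in\mathbb{R_+},
\end{equation*}
for some $K,c>0$ and every $f\in(L_{p_0,r_0}+L_{p_1,r_1})(X,\mu)$. As in the proof of \Cref{Calderon theorem}, one first checks that (2) forces $L_{\Phi_2}(X,\mu)\subset (L_{p_0,r_0}+L_{p_1,r_1})(X,\mu)$ (so that $Tf$ makes sense for $f\in L_{\Phi_2}(X,\mu)$); this is done by testing the boundedness of $S$ on $f^{*}$ against a normalized characteristic function $g^{*}=\chi_{(0,1)}/\|\chi_{(0,1)}\|_{L_{\Psi_1}}$ via the Orlicz duality and using the K-functional estimate for $(L_{p_0,r_0},L_{p_1,r_1})$. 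Then, since $Sf^{*}(c\,\cdot)$ is a rearrangement of $Sf^{*}$ dilated by $1/c$, and since Orlicz norms are rearrangement-invariant and transform predictably under dilation, we obtain
\begin{equation*}
\|Tf\|_{L_{\Phi_1}(Y,\nu)}=\|(Tf)^{*}\|_{L_{\Phi_1}(\mathbb{R_+},m)}\leq K\,c^{-1}\|Sf^{*}\|_{L_{\Phi_1}(\mathbb{R_+},m)}\leq KC\,\|f\|_{L_{\Phi_2}(X,\mu)},
\end{equation*}
where the last inequality is (2) applied to the nonincreasing function $f^{*}$.

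Next I would prove $(1)\Rightarrow(2)$ using the transfer construction of \Cref{construction of Pull back of operator S}. Apply that theorem with the operator $S=H^{p_0,r_0}+H_{p_1,r_1}$ to obtain an operator $\tilde{S}:M_+(X,\mu)\to M_+(Y,\nu)$ satisfying $(\tilde{S}f)^{*_{\nu}}=(Sf^{*_{\mu}})^{*_{m}}$. Since $Sf^{*_{\mu}}$ is already nonincreasing on $\mathbb{R_+}$ (both summands are), the right-hand side is just $Sf^{*_{\mu}}$. By \Cref{S-tilda is nu-quasilinear if S is}, $\tilde{S}$ inherits $r$-quasilinearity from $S$ (which is $r$-quasilinear, being a sum of operators of the form appearing in \Cref{Hprdominance} and its dual version in \Cref{Sqrdominance}). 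A computation identical to the one in the second half of \Cref{Calderon theorem} then shows
\begin{equation*}
\tilde{S}:L_{p_0,r_0}(X,\mu)\to L_{p_0,\infty}(Y,\nu)\quad\text{and}\quad \tilde{S}:L_{p_1,r_1}(X,\mu)\to L_{p_1,\infty}(Y,\nu)
\end{equation*}
boundedly, i.e.\ $\tilde{S}\in W((p_0,r_0),(p_1,r_1);\mu,\nu)$.

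Finally, to obtain (2), apply (1) to $\tilde{S}$ and transfer back to $\mathbb{R_+}$ exactly as at the end of the proof of \Cref{Calderon theorem}: use \Cref{construction of Pull back of operator S} again, this time with $X=\mathbb{R_+}$, $\mu=m$, $(Y,\nu)=(X,\mu)$ and the elementary operator $g\mapsto g^{*_{m}}$, to find, for any nonnegative nonincreasing $g$ on $\mathbb{R_+}$, a function $\tilde g\in M_+(X,\mu)$ with $\tilde g^{*_{\mu}}=g$. Then
\begin{equation*}
\|Sg\|_{L_{\Phi_1}(\mathbb{R_+},m)}=\|(\tilde{S}\tilde g)^{*_{\nu}}\|_{L_{\Phi_1}(\mathbb{R_+},m)}=\|\tilde{S}\tilde g\|_{L_{\Phi_1}(Y,\nu)}\leq C\,\|\tilde g\|_{L_{\Phi_2}(X,\mu)}=C\,\|g\|_{L_{\Phi_2}(\mathbb{R_+},m)},
\end{equation*}
which is precisely (2). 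The main obstacle I anticipate is purely bookkeeping: verifying that the sum operator $H^{p_0,r_0}+H_{p_1,r_1}$ is $r$-quasilinear (with uniform constants) so that \Cref{S-tilda is nu-quasilinear if S is} applies, and confirming that the two end-point mapping properties of $\tilde{S}$ follow from the corresponding properties of each summand $H^{p_0,r_0}$ and $H_{p_1,r_1}$ established in Sections \ref{Class W(prInfinity)} and \ref{Class W(11qr)}; the remainder is a direct adaptation of the previously proven Calder\'on-type theorems.
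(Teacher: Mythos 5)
Your proposal matches the paper's proof essentially step for step: both directions use \Cref{Dominance of Calderon operator for joined class} for $(2)\Rightarrow(1)$ together with dilation-invariance of the Orlicz norm, and the pullback construction of \Cref{construction of Pull back of operator S} (applied twice) for $(1)\Rightarrow(2)$, after noting that $H^{p_0,r_0}+H_{p_1,r_1}\in W((p_0,r_0),(p_1,r_1);m,m)$. Your extra check that $L_{\Phi_2}(X,\mu)\subset(L_{p_0,r_0}+L_{p_1,r_1})(X,\mu)$ is a minor refinement the paper leaves implicit; otherwise the approaches coincide.
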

\begin{proof}
Suppose $(2)$ holds. Then, given $T \in W((p_0,r_0),(p_1,r_1);\mu,\nu)$ and $f \in L_{\Phi_2}(X,\mu)$, one has, by \Cref{Dominance of Calderon operator for joined class},
\begin{align*}
\| Tf\|_{L_{\Phi_1}(Y,\nu)} & = \| (Tf)^*(t) \|_{L_{\Phi_1}(\mathbb{R_+},m)} \\
& \leq K \| \left[ \left( H^{p_0,r_0} + H_{p_1,r_1} \right) f^* \right](ct)  \|_{L_{\Phi_1}(\mathbb{R_+},m)} \\
& \leq K h(c) \| \left[ \left( H^{p_0,r_0} + H_{p_1,r_1} \right) f^* \right](t)  \|_{L_{\Phi_1}(\mathbb{R_+},m)} \\
& \leq K h(c) C \| f^* \|_{L_{\Phi_2}(\mathbb{R_+},m)}\\
& = K h(c) C \| f\|_{L_{\Phi_2}(X,\mu)},
\end{align*}
namely, $(1)$ holds. 

%Here, $h(c)$ denotes the norm of the dilation operator $E_{c}$ on $L_{\Phi_1}(\mathbb{R_+},m)$, defined in \Cref{Rearrangement-invariant Banach function spaces}.

The argument that $(1)$ implies $(2)$ is by now a familiar one. First, one readily proves that
\begin{equation*}
H^{p_0,r_0} + H_{p_1,r_1} \in W((p_0,r_0),(p_1,r_1);m,m).
\end{equation*}

%In \Cref{construction of Pull back of operator S}, take $X$ to be $\mathbb{R_+}$, $\mu$ to be $m$, $Y$ to be $X$, $\nu$ to be $\mu$ and $S$ to be the operator $g \rightarrow g^{*_{m}}$. Given $f \in M_+(\mathbb{R_+},m)$, set $\tilde{f}= \tilde{S}f  \in M_+(X,\mu)$, so that $\tilde{f}^{*_{\mu}}= f^{*_{m}}$.

Thus, second, the operator $  \left( H^{p_0,r_0} + H_{p_1,r_1} \right)^{\sim} $, constructed in \Cref{construction of Pull back of operator S}, is in $ W((p_0,$ $r_0),(p_1, r_1);\mu,\nu)$ and so maps $L_{\Phi_2}(X,\mu)$ boundedly into $L_{\Phi_1}(Y,\nu)$. Third, taking in \Cref{construction of Pull back of operator S}, $X= \mathbb{R_+}$, $\mu=m$, $(Y,\nu)$ to be $(X,\mu)$ and, as $S$, the operator $f \rightarrow f^{*_m}$, one gets, for $f \in M_+(\mathbb{R_+},m)$, a function $\tilde{f} \in M_+(X,\mu)$ such that for all $t \in \mathbb{R_+}$
\begin{equation*}
\tilde{f}^{*_{\mu}}(t) = f^{*_{m}}(t).
\end{equation*}
Therefore, since
\begin{equation*}
\left[  \left( H^{p_0,r_0} + H_{p_1,r_1} \right)^{\sim}f \right]^{*_{\nu}}(t) = \left[  \left( H^{p_0,r_0} + H_{p_1,r_1} \right)f^{*_{\mu}} \right]^{*_{m}} (t)=\left[  \left( H^{p_0,r_0} + H_{p_1,r_1} \right)f^{*_{\mu}} \right] (t),
\end{equation*}
$f \in M_+(\mathbb{R_+},m), t \in \mathbb{R_+}$, we get, for $g \in M_+(\mathbb{R_+},m)$, $g$ nonincreasing, 
\begin{align*}
 \|  \left( H^{p_0,r_0} + H_{p_1,r_1} \right) g \|_{L_{\Phi_1}(\mathbb{R_+},m)}
 & =  \|  \left( H^{p_0,r_0} + H_{p_1,r_1} \right) g^{*_{m}} \|_{L_{\Phi_1}(\mathbb{R_+},m)} \\
& = \|  \left( H^{p_0,r_0} + H_{p_1,r_1} \right) \tilde{g}^{*_{\mu}} \|_{L_{\Phi_1}(\mathbb{R_+},m)} \\
& = \|  \left[  \left( H^{p_0,r_0} + H_{p_1,r_1} \right) \tilde{g}^{*_{\mu}} \right]^{*_{m}} \|_{L_{\Phi_1}(\mathbb{R_+},m)} \\
 & =  \| [ \left( H^{p_0,r_0} + H_{p_1,r_1} \right)^{\sim} {\tilde{g}} ]^{*_{\nu}} \|_{L_{\Phi_1}(\mathbb{R_+},m)} \\
 & =  \|  \left( H^{p_0,r_0} + H_{p_1,r_1} \right)^{\sim} {\tilde{g}}  \|_{L_{\Phi_1}(Y,\nu)} \\
&  \leq C \| \tilde{g}\|_{L_{\Phi_2}(X,\mu)}\\
 &  = C \| {\tilde{g}}^{*_{\mu}} \|_{L_{\Phi_2}(\mathbb{R_+},m)}\\
 &     = C \| {g}^{*_{m}} \|_{L_{\Phi_2}(\mathbb{R_+},m)}\\
 &     = C \| g \|_{L_{\Phi_2}(\mathbb{R_+},m)},
\end{align*}
given $(1)$.
\end{proof}
\subsection{Proof of the main Theorem}
We are now able to verify the main result of this thesis, namely, to give the
\begin{proof}[Proof of \cref{Combined main theorem}]
\Cref{Calderon type theorem for the joined class} ensures that $(1)$ amounts to the assertion that
\begin{equation*}
\|  \left( H^{p_0,r_0} + H_{p_1,r_1} \right) f^* \|_{L_{\Phi_1}(\mathbb{R_+},m)} \leq C \| f^* \|_{L_{\Phi_2}(\mathbb{R_+},m)}
\end{equation*}
with $C>0$ independent of $f \in L_{\Phi_2}(\mathbb{R_+},m)$.

Since $H^{p_0,r_0} + H_{p_1,r_1}$ commutes with dilations, Theorem A in \cite{KRS17} guarantees this assertion equivalent to the inequality
\begin{equation}\label{modular joint calderon operator}
\int_{\mathbb{R_+}} \Phi_1 \left( \left[ \left( H^{p_0,r_0} + H_{p_1,r_1} \right)f^* \right] (t) \right) \leq  \int_{\mathbb{R_+}} \Phi_1 \left( K f^*(s) \right)ds,
\end{equation}
in which $K>0$ is independent of $f \in M_+(\mathbb{R_+},m)$. Indeed, the methods of \Cref{Calderon type theorem for the joined class} shows (\ref{modular joint calderon operator}) equivalent to $(2)$.

Finally, it follows from Theorems \ref{main result 1 class Wpr modified by Kerman}, \ref{main result 22 class Wpr} and \ref{main result for the Wqr class 1}, \ref{main result for the Wqr class 2} that (\ref{modular joint calderon operator}) is equivalent to $(3)$.

\end{proof}

\section{On the monotonicity in $r$ of the condition for $H^{p,r}: L_{\Phi_2} \rightarrow L_{\Phi_1}$}\label{Comparison of the conditions}
We will show in this section that the necessary and sufficient condition for
\begin{equation}\label{Hpr operator for comparision}
H^{p,r}: L_{\Phi_2}(\mathbb{R_+},m) \rightarrow L_{\Phi_1}(\mathbb{R_+},m), \ \ \  1 \leq r<p, 
\end{equation}
namely, with $p_1 = {\textstyle\frac{p}{r}}$
\begin{equation}\label{Cianchi-type condition for comparsision wrt r}
\left( \int_t^{\infty} \frac{\phi_2(Ds)}{\Phi_2(Ds)^{p_1'}}s^{r p_1'} ds \right)^{\frac{1}{p_1'}}  \left( \int_0^{t}\frac{ \phi_1(s) }{s^{p}}ds \right)^{\frac{1}{p_1}} \leq B,  \ \ \text{for all} \ t \in \mathbb{R_+},
\end{equation}
decreases \emph{strictly} in strength as $r$ increases in $[1,p)$.

The same can be shown about the condition for
\begin{equation*}
H_{q,r}: L_{\Phi_2}(\mathbb{R_+},m) \rightarrow L_{\Phi_1}(\mathbb{R_+},m).
\end{equation*}
Now, the inequality
\begin{equation*}
\left( H^{p,r_2} f^* \right)(t) \leq C \left( H^{p,r_1} f^* \right)(t), \ \ \ \text{for all} \ t \in \mathbb{R_+}, \ \ \ 1 \leq r_1 < r_2 < \infty,
\end{equation*}
which follows from (\ref{Lorentz mono}), implies the condition (\ref{Cianchi-type condition for comparsision wrt r}) must decrease in strength. That the decrease is strict in $[1,p)$ is demonstrated by

\begin{example}
Let $1<p< \infty $ and $1 \leq r_1 < r_2 <p$. Let us denote $p_1=p / r_1$ and $p_2= p/r_2$. Fix indices $\alpha_1, \alpha_2$ and $\beta$ with $0 < 1+ \alpha_1  \leq p_1 - p_2$, $p_2 < 1 + \alpha_2 \leq p_1$ and $\beta>p$. Consider the Young functions defined by
\begin{equation}\label{range Young function}
    \Phi_1(t) = \begin{cases}
                 t^{\beta}, & t < e,\\
                 t^{p}(\log t )^{\alpha_1},  & t>e
                \end{cases}
\end{equation}
and
\begin{equation}\label{domain Young function}
    \Phi_2(t) = \begin{cases}
                 t^{\beta}, & t < e,\\
                 t^{p}(\log t )^{\alpha_2},  & t>e.
                \end{cases}
\end{equation}
\end{example}
Here, we will see that for the pair of the Young functions defined  in (\ref{range Young function}) and (\ref{domain Young function}), the condition (\ref{Cianchi-type condition for comparsision wrt r}) holds with $r=r_2$ but not with $r=r_1$.

Observe that the condition (\ref{Cianchi-type condition for comparsision wrt r}) can be rewritten as
\begin{equation}\label{Cianchi type condition Kerman rewritten for comparsision}
\left( \int_{D_i x}^{\infty} \left( \frac{y}{\Phi_2(y^{\frac{1}{r_i}})} \right)^{p'_i-1} dy \right)^{\frac{1}{p'_i}}  \left( \int_0^{x}\frac{ \Phi_1(y^{\frac{1}{r_i}}) }{y^{p_i+1}}dy \right)^{\frac{1}{p_i}} \leq B_i', 
\end{equation}
for some $B_i'>0$, $i=1,2$ and all $x \geq 0$. We can ignore $D_i$ in (\ref{Cianchi type condition Kerman rewritten for comparsision}) for our purpose in this proposition.

First we will see that for these Young functions, the condition (\ref{Cianchi type condition Kerman rewritten for comparsision}) with $i=1$ (that is for $r_1$) does not hold. Let us consider the second integral in the right hand side of (\ref{Cianchi type condition Kerman rewritten for comparsision}). For large $x$ we have,
\begin{equation}\label{Ir1}
\begin{split}
I_{r_1}(x) & := \int_0^{x}\frac{ \Phi_1(y^{\frac{1}{r_1}}) }{y^{p_1+1}}dy \\
& = \int_0^{e^{r_1}} y^{\beta / r_1 -p_1 -1} dy + \int_{e^{r_1}}^{x} \frac{ y^{p/r_1}  (\log (y^{\frac{1}{r_1}}) )^{\alpha_1}    }{y^{p_1+1}}dy \\
& =  \frac{e^{\beta -p }}{\beta / r_1 -p_1} + \frac{1}{r_1^{\alpha_1}} \int_{e^{r_1}}^x (\log y)^{\alpha_1} \frac{dy}{y} \\
& =  \frac{e^{\beta -p }}{\beta / r_1 -p_1} + \frac{1}{r_1^{\alpha_1}} \int_{r_1}^{\log x} \frac{1}{y^{-\alpha_1}} dy \\
& \approx (\log x)^{1 + \alpha_1}.
\end{split}
\end{equation}

For the first integral in the left hand side of (\ref{Cianchi type condition Kerman rewritten for comparsision}), we have
\begin{equation}\label{Jr1}
\begin{split}
J_{r_1}(x) & :=  \int_{ x}^{\infty} \left( \frac{y}{\Phi_2(y^{\frac{1}{r_1}})} \right)^{p'_1-1} dy \\
& = \int_{ x}^{\infty} \left( \frac{y}{   y^{p/r_1} ( \log (y^{1/r_1}))^{\alpha_2}    } \right)^{p'_1-1} dy \\
& = r_1^{\alpha_2(p_1'-1)} \int_{x}^{\infty} \frac{1}{(\log y)^{\alpha_2(p_1'-1)}} \frac{dy}{y} \\
&\approx \int_{\log x}^{\infty} \frac{1}{y^{\alpha_2(p_1'-1)}} dy \  \rightarrow \infty,
\end{split}
\end{equation}
as $\alpha_2(p_1'-1) \leq 1$. So from (\ref{Ir1}) and (\ref{Jr1}) we have that the condition (\ref{Cianchi type condition Kerman rewritten for comparsision}) for $r_1$ does not hold.

It remains to show that the condition (\ref{Cianchi type condition Kerman rewritten for comparsision}) holds for $r_2$. Which follows from the following expressions for the two integrals appearing in (\ref{Cianchi type condition Kerman rewritten for comparsision}) and the assumption $1 - \left( \frac{\alpha_2 - \alpha_1}{p_2} \right) \leq 0$. 
\begin{equation}\label{Ir2}
\begin{split}
I_{r_2}(x) & := \int_0^{x}\frac{ \Phi_1(y^{\frac{1}{r_2}}) }{y^{p_2+1}}dy \\
& = \begin{cases}
                 \frac{x^{\beta / r_2 -p_2 }}{\beta / r_2 -p_2}, & x < e^{r_2},\\
                 \frac{e^{\beta  -p }}{\beta / r_2 -p_2} + \frac{1}{r_2^{\alpha_2}} \frac{(\log x)^{1 + \alpha_2}-r_2^{1+ \alpha_2}}{1+ \alpha_2},  & x>e^{r_2}
                \end{cases}
\end{split}
\end{equation}
and as in (\ref{Jr1})
\begin{equation}\label{Jr2}
\begin{split}
J_{r_2}(x) & :=  \int_{ x}^{\infty} \left( \frac{y}{\Phi_2(y^{\frac{1}{r_2}})} \right)^{p'_2-1} dy \\
& = \begin{cases}
                 \int_x^{e^{r_2}} \left( \frac{y}{y^{\beta / r_2}} \right)^{p_2'-1}dy +  \int_{e^{r_2}}^{\infty} \left( \frac{y}{   y^{p/r_2} ( \log (y^{1/r_2}))^{\alpha_2}    } \right)^{p'_2-1} dy, & x < e^{r_2},\\
                  r_2^{\alpha_2(p_2'-1)} \int_{x}^{\infty} \frac{1}{(\log y)^{\alpha_2(p_2'-1)}} \frac{dy}{y},  & x>e^{r_2}
                \end{cases}\\
& = \begin{cases}
                 \frac{1}{p_2'(\beta /p -1)}  \left[ x^{-p_2'(\beta /p -1)} - e^{-r_2 p_2'(\beta /p -1)}  \right] + A, & x < e^{r_2},\\
                    \frac{   p_2 r_2^{\alpha_2(p_2'-1)}    }{1+ \alpha_2 -p_2} \left( \frac{1}{\log x} \right)^{\frac{p_2' (1+ \alpha_2-p_2)}{p_2}} ,  & x>e^{r_2},
    \end{cases}
\end{split}
\end{equation}
where $A = r_2^{\alpha_2(p_2'-1)} \int_{e^{r_2}}^{\infty}  \frac{1}{    ( \log (y))^{\alpha_2(p_2'-1)}    } \frac{dy}{y} = \frac{  p_2 r_2    }{p_2'(1+ \alpha_2-p_2)}< \infty $ as $\alpha_2(p_2'-1) >1$.

\end{document}